\newtheorem{theorem}{Theorem}[section]
\newtheorem{lemma}[theorem]{Lemma}
\newtheorem{remark}[theorem]{Remark}
\newcommand{\R}{\mathbb{R}  }
\newcommand{\pfeil}{ \rightarrow }
\newcommand{\into}{\hookrightarrow}
\renewcommand{\phi}{\varphi}
\renewcommand{\div}{\operatorname{div}}
\newcommand{\ljump}{ \llbracket  }
\newcommand{\rjump}{ \rrbracket }
\numberwithin{equation}{section}
\title[Stefan problem with $90^\circ$ contact angle]{Strong solutions to the Stefan problem with Gibbs-Thomson correction and boundary contact}
\author[Maximilian Rauchecker]{Maximilian Rauchecker}
\address{Maximilian Rauchecker, Institut f\"ur Angewandte Analysis, Universit\"at Ulm, 89069 Ulm, Germany}
\numberwithin{equation}{section} 
\begin{document}
\maketitle
\begin{abstract}
We prove existence and uniqueness of strong solutions to the two-phase Stefan problem with Gibbs-Thomson law where the free interface forms a ninety degree contact angle with the fixed boundary. We also discuss existence of global solutions and convergence to equilibria.
\end{abstract}
\section{Introduction}
We consider the Stefan problem with Gibbs-Thomson law, where the free interface forms a ninety degree angle to the boundary,
\begin{equation} \label{34905384953745034875} \begin{cases}
\begin{alignedat}{2}
\partial_t u - \Delta u &= 0, &\text{in } \Omega \backslash \Gamma(t), \\
 \ljump u \rjump =0, \quad u|_\Gamma &= \sigma H_\Gamma &\text{on } \Gamma(t), \\
(n_{\partial \Omega} | \nabla u ) &= 0,&\text{on } \partial\Omega \backslash \partial\Gamma(t), \\
V_\Gamma &= - \ljump n_\Gamma \cdot \nabla u \rjump,  \quad\quad\quad  &\text{on }  \Gamma(t), \\
(n_\Gamma | n_{\partial\Omega} ) &= 0, &\text{on } \partial \Gamma(t), \\
u|_{t=0} &= u_0, &\text{in } \Omega \backslash \Gamma(0), \\
\Gamma|_{t=0} &= \Gamma_0.
\end{alignedat} \end{cases}
\end{equation}
Here, $\Omega \subset \R^d $, $d= 2,3$, is a bounded, smooth domain with exterior unit normal vector field $n_{\partial \Omega}$. We assume that the domain can be decomposed as $\Omega = \Omega^+ (t) \dot \cup \mathring \Gamma (t) \dot \cup \Omega^- (t)$, where $\mathring \Gamma (t)$ denotes the interior of the free interface $\Gamma (t)$, a $(d-1)$-dimensional submanifold with boundary. Furthermore, we assume $\Omega^\pm (t)$ to be both connected. The unit normal vector field on $\Gamma(t)$ pointing from $\Omega^- (t)$ to $\Omega^+ (t)$ is denoted by $n_\Gamma$. By $H$ and $V$ we denote the mean curvature and the normal velocity of the free interface, respectively. The jump of a quantity across the interface in direction of the normal $n_\Gamma$ is denoted by $\ljump \cdot \rjump$. Moreover, $\sigma > 0$ is a surface tension constant.

For simplicity we consider the case where the domain is of cylindrical type, that is, $\Omega = \Sigma \times (L_1, L_2)$, where $-\infty < L_1 < 0 < L_2 < \infty$, and $\Sigma \subset \R^{d-1}$ is a bounded, smooth domain. Let $S_1 := \partial \Sigma \times (L_1,L_2)$ and $S_2 := \Sigma \times \{ L_1, L_2 \}$.

\section{Transformation to fixed reference surface}
In this section we transform the Stefan problem \eqref{34905384953745034875} with free interface $\Gamma(t) $ to a fixed reference configuration. To this end we construct a Hanzawa transformation as follows. We follow the ideas of \cite{wilkehabil}. For a suitable construction in a non-cylinder case with curved boundary we refer to \cite{mulsekpaper12}, \cite{mulsek2d}.
To simplify notation, we let $d = 3$.

We asumme that the free interface $\Gamma(t)$ is given as a graph of a height function $h$ over $\Sigma$. More precisely, we assume that there is some $h$ depending on $x' \in \Sigma$ and time $t \geq 0$, such that
\begin{equation}
\Gamma(t) = \Gamma_h (t) := \{ x \in \Sigma \times (L_1,L_2) : x_3 = h(x' , t) , x'  \in \Sigma \}, \quad t \geq 0,
\end{equation}
at least for small times. We then fix a smooth bump function $\chi \in C_0^\infty(\R ; [0,1] )$ such that $\chi (s) = 1$ for $|s| \leq \delta/2$ and $\chi (s) = 0$ for $|s| \geq \delta$. Here, $0 < \delta \leq \min(-L_1,L_2)/3$. Let us now define
\begin{equation} \label{4jjjjjgjfjgjjfgjfjgfj}
\Theta_h : \Omega \times \R_+ \pfeil \Omega, \quad \Theta_h (x,t) := x + \chi(x_3) h(x',t)e_3, 
\end{equation}
where $x = (x', x_3)$.
Then a straightforward calculation shows
\begin{equation}
D \Theta_h =  \begin{pmatrix}
1 & 0 & 0 \\ 0 & 1 & 0 \\
\partial_1 h \chi & \partial_2 h \chi & 1 + h \chi'
\end{pmatrix}.
\end{equation}
If now $h \chi'$ is sufficiently small, $\Theta_h$ is invertible. This is ensured whenever e.g. $
|h|_{\infty} \leq 1/(2 | \chi^\prime|_\infty ) $.
Note that $|\chi'|_\infty$ can be bounded by a constant depending on $\delta$ only.
The inverse is then given by
\begin{equation}
(D\Theta_h)^{-1} = \frac{1}{1+h\chi'}\begin{pmatrix}
1+ h\chi'& 0 & 0 \\ 0 & 1+h\chi' & 0 \\
-\partial_1 h \chi & -\partial_2 h \chi & 1 
\end{pmatrix}.
\end{equation}
For the sequel we fix the cut-off $\chi$ and choose $0 < d_0 < 1/(2|\chi'|_\infty)$ sufficiently small. If now $|h|_{\infty} \leq d_0$, we ensure that the inverse $\Theta_h^{-1} : \Omega \pfeil \Omega$ is well defined and in particular maps the free interface $\Gamma(t)$ to the fixed reference surface $\Sigma$.

Define now the transformed quantity by
\begin{equation}
w (x,t) := u(\Theta_h(x,t), t) ,    \quad x \in \Omega, t \in \mathbb R_+.
\end{equation}
 Define
\begin{equation} \label{34k5kkhjlkh345345345}
D\Theta_h^{-\top} := ((D\Theta_h)^{-1} )^\top = \frac{1}{1+h\chi'}\begin{pmatrix}
1+ h\chi'& 0 & -\partial_1 h \chi \\ 0 & 1+h\chi' & -\partial_2 h \chi \\
 0 & 0 & 1 
\end{pmatrix},
\end{equation}
as well as the transformed operators
\begin{equation*}
\nabla_h  := D\Theta_h^{-\top} \nabla,  \quad \div_h := \operatorname{Tr}(\nabla_h), \quad \Delta_h := \div_h \nabla_h.
\end{equation*}
The upper normal at the free interface $\Gamma_h(t)$ can be written in terms of $h$ by
\begin{equation} \label{8769856}
\nu_{\Gamma(t)} = \frac{(- \nabla h,1)^\top}{\sqrt{ 1+| \nabla h|^2  }}, \quad x' \in \Sigma, t \in \R_+.
\end{equation}
Moreover, the normal velocity satisfies 
\begin{equation}\label{8769856b}
V_{\Gamma(t)} = ( \partial_t h e_3 | \nu_{\Gamma(t)} ) = \frac{\partial_t h	 }{  \sqrt{ 1 + | \nabla h |^2 } }, \quad x' \in \Sigma, t \in \R_+.
\end{equation}
 
We are now able to transform the Stefan problem \eqref{34905384953745034875} with free boundary to $\Omega \backslash \Sigma$. The transformed system reads as 
\begin{equation} \label{978987456885845fgdrerggregrrgrgg9685} \begin{cases}
\begin{alignedat}{2}
\partial_t w -  \Delta w  &= F_w (w,h), & \text{in } \Omega \backslash \Sigma, \\
\ljump w \rjump = 0, \quad  w|_\Sigma - \sigma \Delta_{x'} h &= F_\kappa (h), &\text{on } \Sigma, \\
(n_{\partial\Omega} | \nabla w)  &= F_n(w,h), \quad\quad\quad\quad& \text{on } \partial\Omega \backslash \partial\Sigma, \\
\partial_t h +  \ljump \partial_3 w \rjump &= F_\Sigma(w,h), & \text{on } \Sigma, \\
(( -\nabla h, 0)^\top | n_{\partial \Omega} ) &= 0, &\text{on } \partial \Sigma, \\
w(0) &= w_0, &\text{in } \Omega \backslash \Sigma, \\
h(0) &= h_0, &\text{on } \Sigma,
\end{alignedat} \end{cases}
\end{equation}
where 
\begin{equation} \label{345jjjkj345}
\begin{alignedat}{1}
F_w (w,h) &:=  (\Delta_h - \Delta)w + Dw \cdot \partial_t \Theta_h^{-1}, \\
F_\kappa(h) &:= \sigma \left[ \div \left( \frac{\nabla h}{ \sqrt{ 1 + |\nabla h|^2} } \right) - \Delta h \right], \\
F_n (w,h) &:= (n_{\partial\Omega} | (\nabla-\nabla_h) w) , \\
F_\Sigma (w,h) &:= \ljump \partial_3 w - n_\Gamma \cdot \nabla_h w \rjump + \partial_t h (e_3| e_3 - n_\Gamma ).
\end{alignedat}
\end{equation}
Hereby, $h_0$ is a suitable description of the initial configuration $\Gamma_0$, which we also assume to be a graph over $\Sigma$.

Note that we exploited the fact that on the walls of the cylindrical domain the normal to the boundary $n_{\partial \Omega}$ has last component zero, $(n_{\partial \Omega} | e_3 ) = 0$. This way we obtain the linear boundary condition $\eqref{978987456885845fgdrerggregrrgrgg9685}_5$ for $h$ in the setting of a cylindrical container - in contrast to a general bounded domain where the boundary condition is highly nonlinear, cf. \cite{mulsekpaper12}.

There is also a remark in order regarding the nonlinearity $F_n(w,h)$. By equation \eqref{34k5kkhjlkh345345345} we can write
\begin{equation}
(\nabla - \nabla_h)w = (I - D\Theta_h^{-\top} ) \nabla w = \frac{1}{1+h \chi'} \begin{pmatrix}
-  \partial_1 h \; \partial_3 w \; \chi  \\
-  \partial_2 h \; \partial_3 w  \; \chi \\
-\partial_3 w \; h \; \chi'
\end{pmatrix}.
\end{equation}
Hence, if $h$ satisfies $\eqref{978987456885845fgdrerggregrrgrgg9685}_5$,
\begin{equation}
F_n(w,h) =  \frac{\partial_3 w  \chi}{1+h \chi'}  (n_{\partial \Omega} | (-\nabla h, 0)^\top )_{\mathbb R^3} = 0, \quad \text{on } \partial \Omega \backslash \partial \Sigma.
\end{equation}
Here we used that $(n_{\partial \Omega} | e_3 ) = 0$ on $S_2$ and that $\chi = 0$ in a neighbourhood of $S_1$. This means we may replace $F_n(w,h)$ simply by zero in problem \eqref{978987456885845fgdrerggregrrgrgg9685}. 

As in \cite{mulsekpaper12}, we want to study problem \eqref{978987456885845fgdrerggregrrgrgg9685} in an $L_p-L_q$-setting. This is due to the fact that we need the technical restriction $q <2$ in the spatial integrability to be able to apply certain reflection techniques and avoid additional compatibility conditions. On the contrary, we need $p$ to be sufficiently large to ensure that the height function is $C^2(\Sigma)$ for every time $t \geq 0$. By these reasons we treat the problem in an $L_p-L_q$-theory with $p \not = q$.

\section{Model problems}
In this section we derive maximal regularity of type $L_p-L_q$ for the model problems. There are two relevant model problems in the case of a cylindrical domain: the half-space problem where the flat interface meets the boundary at a ninety degree angle and the quarter space problem without interface but two parts of the boundary. We will start this section however with a characterization of trace spaces for the height function.

Let $1 < p,q < \infty$. In this chapter we consider height functions of class
\begin{equation} \label{234r534534534545jjjjd}
h \in F^{3/2 - 1/2q}_{pq}(0,T; L_q (\Sigma))  \cap F^{1-1/2q}_{pq}(0,T; H^2_q(\Sigma))  \cap L_p(0,T ; W^{4-1/q}_q(\Sigma)).
\end{equation}
For readability, we define $1-1/2q := 1-1/(2q)$ and use similar notation in the following. 
Let us first discuss the traces of $h$ at $t = 0$. 
Using Proposition 5.37 and 5.39 in \cite{kaipdiss},
\begin{equation}
\begin{alignedat}{1}
F^{3/2-1/2q}_{pq}&(0,T; L_q (\Sigma))  \cap F^{1-1/2q}_{pq}(0,T; H^2_q(\Sigma)) \into \\
&\into H^1_p(0,T; W^{2-2/q}_q(\Sigma)).
\end{alignedat}
\end{equation}
By classical results, cf. \cite{amann},
\begin{equation}
\begin{alignedat}{1}
H^1_p(0,T; &W^{2-2/q}_q (\Sigma))    \cap L_p(0,T ; W^{4-1/q}_q(\Sigma)) \into \\ &\into BUC([0,T] ; (W^{4-1/q}_q(\Sigma) ,W^{2-2/q}_q(\Sigma) )_{1-1/p,p} ). \end{alignedat}
\end{equation}
By standard real interpolation,
\begin{equation}
(W^{2-2/q}_q(\Sigma),W^{4-1/q}_q(\Sigma)  )_{1-1/p,p} = B_{qp}^{4-1/q-2/p-1/pq} (\Sigma).
\end{equation}
This space now continuously embeds into $C^2(\Sigma)$, provided
\begin{equation}
4-1/q - 2/p - 1/pq - (n-1)/q > 2, \quad n = 2,3.
\end{equation}
This can easily be achieved by choosing $ q < 2$ close to $2$ and $p < \infty$ large enough. For instance this is ensured for all $p \in (6,\infty)$ and $q \in (19/10, 2)$.
Note that this already implies that the solution space for $h$ in \eqref{234r534534534545jjjjd} continuously embeds into
\begin{equation} \label{34895834573485734857485743}
BUC([0,T] ; C^2 (\Sigma)),
\end{equation}
whenever $p \in (6,\infty)$, $q \in (19/10, 2)$.

\subsection{Trace spaces for the height function.}
\begin{lemma}[Time traces] \label{24345hhhhhsss} Let $p \in (6,\infty), q \in (19/10,2) \cap (2p/(p+1), 2p)$.
The space of traces at $t = 0$ for functions in
\begin{equation} \label{23948534583477384}
F^{3/2-1/2q}_{pq}(0,T; L_q (\R^{n-1}_+))  \cap F^{1-1/2q}_{pq}(0,T; H^2_q(\R^{n-1}_+))  \cap L_p(0,T ; W^{4-1/q}_q(\R^{n-1}_+))
\end{equation}
is given by $B^{4-1/q-2/p}_{qp}(\R^{n-1}_+)$.

 In particular, for every $h_0 \in B^{4-1/q-2/p}_{qp}(\R^{n-1}_+)$ there is some $\bar h$ with regularity \eqref{23948534583477384} satisfying $\bar h(0) = h_0$.
\end{lemma}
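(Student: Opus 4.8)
The plan is to identify the space \eqref{23948534583477384} as an anisotropic \emph{mixed-order} space over $J \times \R^{n-1}_+$ (with $J = (0,T)$) whose temporal trace at $t=0$ is governed by a single parabolic scale, and to read the trace off from the standard trace theorem for such scales. First I would remove the half-space by composing with a universal Stein--Seeley extension operator for $\R^{n-1}_+$, which is bounded with a bounded right inverse on each Bessel-potential, Slobodecki\u{\i} and Triebel--Lizorkin scale occurring in \eqref{23948534583477384} and commutes with the temporal variable; this reduces matters to the spatial domain $\R^{n-1}$, where I fix the sectorial operator $A := 1 - \Delta'$ with bounded $H^\infty$-calculus. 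In the $A$-scale the three factors read $F^{3/2-1/2q}_{pq}(J;L_q)$, $F^{1-1/2q}_{pq}(J;D(A))$ and $L_p(J;D_A(2-1/2q;q))$, since $D(A) = H^2_q$ and $W^{4-1/q}_q = B^{4-1/q}_{qq} = D_A(2-1/2q;q)$ is a real interpolation space in this scale. Plotting temporal against spatial order, the three exponents are \emph{not} collinear: the pair $\bigl(F^{1-1/2q}_{pq}(J;H^2_q),\,L_p(J;W^{4-1/q}_q)\bigr)$ lies on a line of parabolic slope two, while the first space sits strictly below the prolongation of this line. The whole point is that the $t=0$ trace is dictated by this order-two edge.

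For necessity I would use the trivial inclusion of \eqref{23948534583477384} into the order-two pair $F^{1-1/2q}_{pq}(J; H^2_q) \cap L_p(J; W^{4-1/q}_q)$ and apply the temporal trace theorem for spaces of the form $F^{s}_{pq}(J; X_0) \cap L_p(J; X_1)$, cf.\ \cite{kaipdiss}, \cite{amann}: if $s > 1/p$, the trace at $t=0$ maps onto $(X_0, X_1)_{1 - 1/(sp),\, p}$. Here $s = 1-1/2q$ (which exceeds $1/p$ by the standing assumptions), $X_0 = H^2_q$, $X_1 = W^{4-1/q}_q$, so the trace lands in $(H^2_q, W^{4-1/q}_q)_{\theta,p}$ with $\theta = 1 - 1/((1-1/2q)p)$. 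By the standard real interpolation formula for the $L_q$-Sobolev scale this equals $B^{2 + \theta(2-1/q)}_{qp}$, and since $(2-1/q)/(1-1/2q) = 2$ one computes $2 + \theta(2-1/q) = 4 - 1/q - 2/p$. Thus the trace maps \eqref{23948534583477384} into $B^{4-1/q-2/p}_{qp}(\R^{n-1})$, and after restriction into $B^{4-1/q-2/p}_{qp}(\R^{n-1}_+)$.

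For surjectivity --- which also yields the final ``in particular'' assertion --- I would, given $h_0 \in B^{4-1/q-2/p}_{qp}$, take the analytic-semigroup extension $\bar h(t) := e^{-tA} h_0$ and verify membership in all three factors. Membership in $F^{1-1/2q}_{pq}(J;H^2_q) \cap L_p(J;W^{4-1/q}_q)$ is exactly the converse direction of the trace theorem above, that is, the characterization of $B^{4-1/q-2/p}_{qp}$ as the space of admissible initial data for the order-two scale generated by $A$. Membership in the first factor $F^{3/2-1/2q}_{pq}(J;L_q)$ is where the mixed-order structure must be controlled, but it is automatic: the single-orbit criterion $e^{-tA}h_0 \in F^{3/2-1/2q}_{pq}(J;L_q)$ is equivalent to $h_0 \in (L_q, D(A^{3/2-1/2q}))_{1-1/((3/2-1/2q)p),\,p} = B^{3-1/q-2/p}_{qp}$, one smoothness order below the space $B^{4-1/q-2/p}_{qp}$ already at hand, so the required embedding holds trivially. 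Hence $\bar h \in \eqref{23948534583477384}$ with $\bar h(0) = h_0$, and restriction to $\R^{n-1}_+$ finishes the argument.

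The main obstacle is precisely the non-collinearity of the three exponents: one must check, rather than assume, that the $t=0$ trace is controlled by the order-two edge and is neither lowered nor obstructed by the more singular first factor. On the level of necessity this is free, since passing to the order-two pair only enlarges the trace; the real content lies in the converse, where the computation above shows the order-two semigroup extension still meets the first factor with one order to spare. The one genuine constraint is the inequality $1/p < 1-1/2q$, needed both for the applicability and for the surjectivity of the edge trace theorem (equivalently $\theta = 1-1/((1-1/2q)p) \in (0,1)$); it is ensured by $p \in (6,\infty)$ and $q \in (19/10,2)$, while the remaining restriction $q \in (2p/(p+1),2p)$ keeps all smoothness exponents positive and non-exceptional, so that the cited trace and interpolation results apply verbatim.
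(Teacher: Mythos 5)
Your proposal is correct and follows essentially the same route as the paper: extend to the full space, read the trace off the parabolic order-two edge $F^{1-1/2q}_{pq}(J;H^2_q)\cap L_p(J;W^{4-1/q}_q)$ via real interpolation for necessity, and use the semigroup orbit $e^{-tA}h_0$ of a second-order operator for sufficiency, checking the more singular first factor separately with a full order of smoothness to spare. The only cosmetic difference is that the paper routes necessity through the mixed-derivative embedding into $H^{1/2}_p(J;W^{3-1/q}_q)$ and the trace theory for the fractional evolution equation $\partial_t^{1/2}h+(I-\Delta)^{1/2}h=f$, while you apply the edge trace theorem directly; both yield $B^{4-1/q-2/p}_{qp}$.
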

\begin{proof}
Firstly, the traces at $t = 0$ are well defined. Let us first show necessity. Consider a function $h$ in \eqref{23948534583477384}. We may extend the function to all of $\R^{n-1}$ and still denote it by $h$. Using Proposition 5.38 in \cite{kaipdiss} we get an embedding
\begin{equation}
\begin{alignedat}{1}
L_p(0,T; &W^{4-1/q}_q (\R^{n-1}))  \cap F^{1-1/2q}_{pq}(0,T; H^2_q(\R^{n-1})) \into \\
&\into H^{1/2}_p(0,T;W^{3-1/q}_q(\R^{n-1})).
\end{alignedat}
\end{equation}
Hence $$h \in H^{1/2}_p(0,T;W^{3-1/q}_q(\R^{n-1})) \cap L_p(0,T ; W^{4-1/q}_q(\R^{n-1})).$$ It is now well known that $( I - \Delta)^{1/2}$ with domain $W^{4-1/q}_q(\R^{n-1})$ has a bounded $H^\infty$-calculus on $W^{3-1/q}_q(\R^{n-1})$, whence we may consider $h$ as a trivial solution of the problem
\begin{equation}
\partial_t^{1/2} h + ( I - \Delta)^{1/2} h = f, \; t > 0, \quad h(0) = h_0,
\end{equation}
where $f := \partial_t^{1/2} h + ( I - \Delta)^{1/2} h  \in L_p( 0,T;W^{3-1/q}_q(\R^{n-1}))$ and $h_0 := h(0)$. Proposition 4.5.14 in \cite{pruessbuch} now gives that
\begin{equation}
h_0 \in ( W^{3-1/q}_q ( \R^{n-1} ) , W^{4-1/q}_q ( \R^{n-1} ) )_{1-1/p\alpha, p}, \quad \alpha = 1/2.
\end{equation} 
An easy calculation shows that then $h_0 \in B^{4-1/q-1/p\alpha}_{qp}(\R^{n-1})$ and this shows necessity.

Let us show the converse direction. So let $h_0 \in B^{4-1/q-2/p}_{qp}(\R^{n-1}_+)$ be given. Again we may extend the function to all of $\R^{n-1}$ without relabeling. Let $A$ be the realization of $I - \Delta$ on $W^{2-1/q}_q(\R^{n-1})$ with natural domain $D(A) = W^{4-1/q}_q(\R^{n-1})$. We know that $A$ enjoys maximal $L_p$-regularity and $-A$ generates an analytic $C_0$-semigroup in $X_0$. We may now solve the equation
\begin{equation}
\partial_t h + A h = 0, \; t >0, \quad h(0) = h_0,
\end{equation}
in an $L_p-L_q$-theory by the function $\bar h := e^{-At} h_0$. We now need to show that $\bar h$ belongs to \eqref{23948534583477384}. Directly from semigroup theory we obtain that
\begin{equation}
\bar h \in H^1_p(0,T; W^{2-1/q}_q (\R^{n-1}))   \cap L_p(0,T ; W^{4-1/q}_q(\R^{n-1})).
\end{equation}
We also have the embedding
\begin{equation}
H^1_p(0,T; W^{2-1/q}_q (\R^{n-1}))   \cap L_p(0,T ; W^{4-1/q}_q(\R^{n-1})) \into F^{1-1/2q}_{pq} (0,T; H^2_q ( \R^{n-1} ) ),
\end{equation}
see Propositions 5.37 and 5.39 in \cite{kaipdiss}. 
Note that $\partial_t \bar h = - A \bar h = - Ae^{-At} h_0 \in F^{1-1/2q}_{pq} (0,T; L_q ( \R^{n-1} ) )$,
whence the proof is complete since $\bar h (0) = h_0$.
\end{proof}
\begin{remark}
Note that for a function $h$ in the above class also $\partial_t h$ has a time trace at $t = 0$. However, the Neumann trace of $h$ has no full time derivative. This will be a consequence of Lemma \ref{34ggg65456546} below, see also Remark \ref{934534534555555h55h5}.
\end{remark}
The following lemma states the optimal regularity for the Neumann traces.
\begin{lemma}[Neumann trace space] \label{34ggg65456546}
Let $p \in (6,\infty), q \in (19/10,2) \cap (2p/(p+1), 2p)$. The Neumann trace $[h \mapsto \nabla h|_{\partial \R^{n-1}_+} ]$ is bounded as a mapping from
\begin{equation} \label{2394jjjjjjjjjjjjg}
{_{00} F}^{3/2-1/2q}_p(0,T; L_q (\R^{n-1}_+))  \cap {_ 0 F}^{1-1/2q}_{pq}(0,T; H^2_q(\R^{n-1}_+))  \cap L_p(0,T ; W^{4-1/q}_q(\R^{n-1}_+))
\end{equation}
to
\begin{equation} \label{23ttttttrjg}
{_0 F}^{\beta(q)}_{pq}(0,T; L_q (\partial \R^{n-1}_+))  \cap {_ 0 F}^{1-1/2q}_{pq}(0,T; W^{1-1/q}_q(\partial \R^{n-1}_+))  \cap L_p(0,T ; W^{3-2/q}_q(\partial \R^{n-1}_+)),
\end{equation}
where
\begin{equation}
\beta (q) = \frac{5}{4} - \frac{1}{q} + \frac{1}{4(3q-1)}.
\end{equation}
Here, the two subscript zeros denote vanishing traces at $ t = 0$, $h(0) = 0$ and $\partial_t h(0) = 0$, one subscript zero means $h(0) = 0$ only. 

Furthermore, there exists a continuous right inverse in the following sense: For every $b$ in \eqref{23ttttttrjg} there exists some $h=E(b)$ in 
\begin{equation} \label{2394jjjjjjjjjjjjg}
{_{0} F}^{3/2-1/2q}_p(0,T; L_q (\R^{n-1}_+))  \cap {_ 0 F}^{1-1/2q}_{pq}(0,T; H^2_q(\R^{n-1}_+))  \cap L_p(0,T ; W^{4-1/q}_q(\R^{n-1}_+))
\end{equation}
such that $\partial_n h|_{\partial \R^{n-1}_+} = b$ and $[b \mapsto E(b)]$ is continuous between the above spaces. In particular, the operator norm of $[b \mapsto E(b)]$ is independent of $T$ (since we restrict to vanishing time traces).
\end{lemma}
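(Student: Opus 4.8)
The plan is to prove the two assertions — boundedness of the Neumann trace and existence of a $T$-independent right inverse — by reducing everything to the three spatial-regularity levels that constitute the target space \eqref{23ttttttrjg}, and by treating the lowest level, which carries the exotic exponent $\beta(q)$, through a mixed-derivative argument.

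First I would dispose of the two higher levels. Applying $\nabla$ lowers the spatial order by one and the spatial trace $[\,\cdot\,|_{\partial \R^{n-1}_+}]$ lowers it by a further $1/q$, while leaving the temporal regularity untouched (since the resulting boundary order stays positive). Hence the top component $L_p(0,T; W^{4-1/q}_q(\R^{n-1}_+))$ maps into $L_p(0,T; W^{3-2/q}_q(\partial\R^{n-1}_+))$, and the middle component ${_0F}^{1-1/2q}_{pq}(0,T; H^2_q(\R^{n-1}_+))$ maps into ${_0F}^{1-1/2q}_{pq}(0,T; W^{1-1/q}_q(\partial\R^{n-1}_+))$, which are exactly the third and second members of \eqref{23ttttttrjg}. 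At these levels enough spatial smoothness survives the differentiation and restriction, so no temporal regularity has to be spent and these are the classical (temporally isotropic) spatial trace theorems.

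The real work is the first member, ${_0F}^{\beta(q)}_{pq}(0,T; L_q(\partial\R^{n-1}_+))$. Here the temporally heaviest information of $h$ sits at spatial order $0$ (the corner ${_{00}F}^{3/2-1/2q}_p(0,T; L_q)$), so $\nabla h|_{\partial}$ cannot be formed at that level directly; one must borrow spatial smoothness from the higher corners, and the mixed-derivative theorem dictates the price paid in temporal regularity. Concretely I would use a Sobolevskii/Kaip-type embedding (of the kind used in the computations preceding Lemma \ref{24345hhhhhsss}) to realise the intersection \eqref{2394jjjjjjjjjjjjg} inside a single scale ${_0F}^{\gamma}_{pq}(0,T; W^{1+1/q}_q(\R^{n-1}_+))$, the spatial order $1+1/q$ being the minimal one for which $\nabla$ followed by the spatial trace reaches boundary order $0$. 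Crucially, the two time–Triebel–Lizorkin corners lie on a broken scaling profile — slope $-1/4$ between the corners of spatial order $0$ and $2$, and slope $-1/2$ between those of order $2$ and $4-1/q$ — so the optimal $\gamma$, and after the trace the optimal time order $\beta(q)$, is obtained by tracking this two-piece profile rather than a single parabolic line; this is the source of the seemingly strange value $\beta(q)=\tfrac54-\tfrac1q+\tfrac1{4(3q-1)}$. The endpoint is closed using $q<2$: the spatial trace at the critical order produces a Besov space $B^0_{qq}(\partial\R^{n-1}_+)$, and $B^0_{qq}\into L_q$ precisely when $q\le 2$, which is the standing assumption and explains the restriction $q\in(19/10,2)$. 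I expect the careful bookkeeping of this optimisation — isolating $\beta(q)$ along the broken profile and verifying that the Besov–Bessel endpoint genuinely closes up under $q<2$ — to be the main obstacle.

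Finally, for the right inverse I would prescribe only the normal component $\partial_n h|_{\partial}=b$, the tangential part of $\nabla h|_{\partial}$ being determined by $h|_{\partial}$ and hence of lower order. The decisive simplification is the vanishing time traces encoded by the subscripts $0$ and $00$: extending $b$ by zero across $t=0$ keeps it in the homogeneous classes on the time half-line $(0,\infty)$, after which one constructs $E(b)$ as a caloric-type extension into $\R^{n-1}_+$ — solving an auxiliary Neumann problem of the appropriate parabolic order by maximal $L_p$–$L_q$-regularity, or equivalently via an operator-valued Fourier multiplier acting jointly in the normal and time variables. Because on the half-line this solution operator is translation-invariant in time and receives no initial-data contribution, its norm is independent of $T$, giving the claimed $T$-uniform bound for $[b\mapsto E(b)]$; that $\partial_n E(b)|_{\partial}=b$ and that $E(b)$ lands back in \eqref{2394jjjjjjjjjjjjg} then follows by construction together with the trace bounds established in the first part.
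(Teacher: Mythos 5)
Your treatment of the two higher components of \eqref{23ttttttrjg} and your overall reductions (dispose of the second and third members by classical spatial trace theory, reflect in time using the vanishing traces, isolate the lowest spatial level as the real difficulty) match the paper. However, there is a genuine gap in the step that produces the exponent $\beta(q)$. You propose to realise the intersection \eqref{2394jjjjjjjjjjjjg} inside a single isotropic scale ${_0F}^{\gamma}_{pq}(0,T;W^{1+1/q}_q(\R^{n-1}_+))$ and then take the spatial trace of $\nabla h$ at the critical boundary order, closing the endpoint via $B^0_{qq}\into L_q$ for $q\le 2$. This cannot work as stated: the spatial trace at exactly the critical order, i.e. from $W^{1/q}_q(\R^{n-1}_+)=B^{1/q}_{qq}(\R^{n-1}_+)$, is not a bounded operator into $L_q(\partial\R^{n-1}_+)$ or into $B^0_{qq}(\partial\R^{n-1}_+)$ --- the trace theorem requires strictly more than $1/q$ derivatives (the endpoint holds only for the third index $1$), so the embedding $B^0_{qq}\into L_q$ is never reached. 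If instead you interpolate strictly above the critical order, to spatial order $\theta(3-1/q)>1/q$ with time order $(1-\theta)\alpha(q)$, $\alpha(q)=5/4-1/2q$, you obtain only time regularities strictly below $\beta(q)=\alpha(q)\bigl(1-\tfrac{1}{3q-1}\bigr)$: the sharp exponent is the supremum of what your isotropic argument yields, and it is not attained. The paper closes this endpoint by a different mechanism: after the time reflection it shows, by interpolating the corners of \eqref{2394jjjjjjjjjjjjg}, that
\begin{equation*}
\nabla h\in {_0F}^{5/4-1/2q}_{pq}(\R_+;L_q(\R^{n-1}))\cap L_p(\R_+;W^{3-1/q}_q(\R^{n-1})),
\end{equation*}
identifies this intersection with an \emph{anisotropic} Triebel--Lizorkin space in the sense of Johnsen--Sickel, and applies the anisotropic trace theorem, for which the boundary trace exists with time order exactly $\beta(q)=\alpha(q)-\frac{\alpha(q)}{3-1/q}\cdot\frac1q$. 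That anisotropic trace result is the missing ingredient; without it the first member of \eqref{23ttttttrjg} is out of reach.

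The same remark applies to the right inverse. The paper does not solve an auxiliary parabolic Neumann problem --- note that the target class has time order $3/2-1/2q$ against spatial order $4-1/q$, which is not a second-order parabolic scaling, so a caloric-type extension does not obviously land in \eqref{2394jjjjjjjjjjjjg}. Instead it again invokes the Johnsen--Sickel extension operator for the anisotropic space ${_0F}^{\beta(q)}_{pq}(\R_+;L_q)\cap L_p(\R_+;W^{3-2/q}_q)$, obtaining $E(b)\in{_0F}^{\gamma(q)}_{pq}(\R_+;L_q)\cap L_p(\R_+;W^{4-1/q}_q)$ with $\gamma(q)=\frac{4-1/q}{3-2/q}\,\beta(q)\ge 3/2-1/2q$ for $q<2$, and recovers the middle component of \eqref{2394jjjjjjjjjjjjg} from the additional regularity of $b$ by spatial trace theory. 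Your observation that the vanishing time traces make the whole construction $T$-independent is correct and coincides with the paper's justification.
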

\begin{remark}
Note that as $q \uparrow 2$, the regularity index $\beta(q) \pfeil 4/5$, whereas $1-1/2q \pfeil 3/4$. Hence, for $q < 2$ very close to $2$, we can not get rid of the first space in \eqref{23ttttttrjg} since the time regularity there is higher as in the other spaces.
\end{remark}
\begin{remark} \label{934534534555555h55h5}
Note that $\frac{5}{4} - \frac{1}{q} + \frac{1}{4(3q-1)} < 1$ for all $q \in (19/10,2 )$.
\end{remark}
Let us now prove Lemma \ref{34ggg65456546}.
\begin{proof}
By a reflection argument in time (this is possible since we have vanishing traces at $t=0$) and extending the functions to the whole of $\R^{n-1}$ it is enough to consider now a function $h$ in 
\begin{equation} \label{2hhhhghg}
{_{00} F}^{3/2 - 1/2q}_{pq}(\R_+; L_q (\R^{n-1}))  \cap {_ 0 F}^{1-1/2q}_{pq}(\R_+; H^2_q(\R^{n-1}))  \cap L_p(\R_+ ; W^{4-1/q}_q(\R^{n-1})).
\end{equation}
By standard trace theory, we readily get
\begin{equation} \label{2hhggfhfghfghg}
\nabla h \in  {_ 0 F}^{1-1/2q}_{pq}(\R_+; H^1_q(\R^{n-1}))  \cap L_p(\R_+ ; W^{3-1/q}_q(\R^{n-1})).
\end{equation}
We may interpolate the first two spaces and the first and the last in \eqref{2hhhhghg} to the result
\begin{equation}
{_ 0 F}^{5/4 - 1/2q}_{pq}(\R_+; H^1_q(\R^{n-1})), \;  {_ 0 F}^{(3q-1)/(2q+1)}_{pq}(\R_+; H^1_q(\R^{n-1})),
\end{equation}
see \cite{kaipdiss}. 
Since $1 < q <2$, we obtain that the regularity index $5/4 - 1/2q$ is larger than the second one, hence the second space embeds into the first one. This is also natural if we consider the regularity diagram for the Stefan problem below.
Hence
\begin{equation} 
\nabla h \in {_ 0 F}^{5/4 - 1/2q}_{pq}(\R_+; L_q(\R^{n-1})) \cap {_ 0 F}^{1-1/2q}_{pq}(\R_+; H^1_q(\R^{n-1}))  \cap L_p(\R_+ ; W^{3-1/q}_q(\R^{n-1})).
\end{equation}
Standard trace theory in the last two spaces gives
\begin{equation} 
\nabla h|_{\partial \R^{n-1}_+} \in {_ 0 F}^{1-1/2q}_{pq}(\R_+; W^{1-1/q}_q(\R^{n-1}_+))  \cap L_p(\R_+ ; W^{3-2/q}_q(\R^{n-1}_+)).
\end{equation}
We may mow mimic the proof of Theorem B.1 in \cite{mulsekpaper12} and write the space $${_ 0 F}^{\alpha(q)}_{pq}(\R_+; L_q(\R^{n-1}))  \cap L_p(\R_+ ; W^{3-1/q}_q(\R^{n-1}))$$ as an anisotropic Triebel-Lizorkin space, cf. \cite{johnsensickel2008}. Then taking anisotropic traces onto the boundary gives that
\begin{equation} 
\nabla h|_{\partial \R^{n-1}_+} \in {_ 0 F}^{\beta(q)}_{pq}(\R_+; L_q(\R^{n-1}_+))  \cap L_p(\R_+ ; W^{3-2/q}_q(\R^{n-1}_+)),
\end{equation}
where 
\begin{equation}
\beta (q) := \alpha(q) - \frac{ \alpha (q)}{3-1/q} \frac{1}{q}, \quad \alpha (q) := \frac{5}{4} - \frac{1}{2q}.
\end{equation}
Simple calculations then entail that
\begin{equation}
\beta (q) = \frac{5}{4} -\frac{1}{q} + \frac{1}{4(3q-1)}. 
\end{equation}
We have so far shown the first part. It now remains to construct a continuous right inverse. So let us be given some
\begin{equation} \label{345634645645656}
b \in {_ 0 F}^{\beta(q)}_{pq}(\R_+; L_q(\R^{n-1}_+))  \cap L_p(\R_+ ; W^{3-2/q}_q(\R^{n-1}_+)).
\end{equation}
Again mimicking the proof of Theorem B.1 in \cite{mulsekpaper12} we may write this space as an anisotropic Triebel-Lizorkin space and use the results of \cite{johnsensickel2008}. We obtain, given $b$ in \eqref{345634645645656}, a right inverse $E(b)$ of the Neumann trace with
\begin{equation}
E(b) \in {_0 F}^{\gamma(q)}_{pq}(\R_+; L_q(\R^{n-1}_+))  \cap L_p(\R_+ ; W^{4-1/q}_q(\R^{n-1}_+)).
\end{equation}
Here, 
\begin{equation}
\gamma(q) := \frac{1}{3-2/q} (4-1/q) \beta(q) = \frac{5}{3} - \frac{1}{2q} - \frac{1}{12(3q-1)}.
\end{equation}
Now again since $q < 2$, we have $\gamma(q) \geq 3/2 - 1/2q$. Hence by Banach-space valued embeddings for Triebel-Lizorkin spaces, cf. \cite{MR2911497},
\begin{equation}
{_0 F}^{\gamma(q)}_{pq}(\R_+; L_q(\R^{n-1}_+))   \into {_0 F}^{3/2 - 1/2q}_{pq}(\R_+; L_q(\R^{n-1}_+)).
\end{equation}
If now additionally to \eqref{345634645645656}, $b$ also has regularity ${_ 0 F}^{1-1/2q}_{pq}(0,T; W^{1-1/q}_q(\partial \R^{n-1}_+))$,
by standard trace theory in the spatial variable we also obtain that
\begin{equation}
E(b) \in {_ 0 F}^{1-1/2q}_{pq}(0,T; H^{2}_q( \R^{n-1}_+))
\end{equation}
since we already know that $\nabla E(b) |_{\partial \R^{n-1}_+ } = b$. The proof is complete.
\end{proof}
We close this subsection with the regularity diagram for the solution space \eqref{234r534534534545jjjjd}.
\begin{figure}[h]
\centering
\begin{tikzpicture}
\draw[thick,->] (0,0) -- (5,0) node[anchor=north west] {$x$};
\draw[thick,->] (0,0) -- (0,3.6) node[anchor=south east] {$t$};
 \draw (2 cm,2pt) -- (2 cm,-2pt) node[anchor=north] {$2$};
 \draw (3.5 cm,2pt) -- (3.5 cm,-2pt) node[anchor=north] {$4-1/q$};
  \draw (2pt,2.25cm) -- (-2pt,2.25 cm) node[anchor=east] {$3/2-1/2q$};
   \draw (2pt,1.5cm) -- (-2pt,1.5 cm) node[anchor=east] {$1-1/2q$};
   
   \draw[thick,dashed] (0,1.5) -- (4,1.5) ;
   \draw[thick,dashed] (2,0) -- (2,3) ;
      \draw[thick] (0,2.25) -- (2,1.5) ;
       \draw[thick]  (2,1.5) -- (3.5,0) ;
       
       \filldraw[black] (2,1.5) circle [radius=1.35pt];
\end{tikzpicture}
\end{figure}
Here we note that the left line segment has a slope of modulus $1/4$, whereas the right line segment has a slope of modulus $1/2$.
\subsection{The model problem with a flat interface.}
Let us discuss optimal regularity and solvability near the contact line. We will now investigate the model problem consisting of a half-space problem with a flat interface. More precisely, let $n=2,3$, $\Omega := \{ x \in \R^n : x_1 > 0 \}$, and $\Sigma := \Omega \cap \{ x_n = 0 \}.$ Let $\omega > 0$. We consider the linear problem
\begin{equation} \label{43453454445444444454}
\begin{cases} \begin{alignedat}{2}
\partial_t u + \omega u - \Delta u &= f_u, \qquad\qquad\qquad &\text{in } \Omega \backslash \Sigma, \\
(n_{\partial\Omega} | \nabla u) &= f_n, &\text{in } \partial\Omega, \\
\ljump u\rjump =0, \quad u|_\Sigma - \Delta h &= g, &\text{on } \Sigma, \\
(n_{\partial\Omega} | \nabla h) &= b, &\text{in } \partial\Sigma, \\
\partial_t h + \omega h + \ljump \partial_3 u \rjump &= f_h, &\text{on } \Sigma, \\
u(0) &= u_0, &\text{in } \Omega \backslash \Sigma, \\
h(0) &= h_0, &\text{on } \Sigma. \\
\end{alignedat} \end{cases}
\end{equation}
Let $p \in (6,\infty)$, $q \in (19/10,2) \cap (2p/(p+1), 2p)$. We are interested in strong solutions
\begin{equation} \label{43534534534534534534A}
\begin{alignedat}{2}
h \in \mathbb E_h(T) := F^{3/2-1/2q}_{pq}(0,T; L_q (\Sigma))  \cap F^{1-1/2q}_{pq}(0,T; H^2_q(\Sigma))  \cap L_p(0,T ; W^{4-1/q}_q(\Sigma))
\end{alignedat}
\end{equation}
and
\begin{equation}
\begin{alignedat}{2} \label{43534534534534534534B}
u \in \mathbb E_u(T) := H^1_p(0,T; L_q(\Omega)) \cap L_p(0,T; H^2_q(\Omega \backslash \Sigma)) .
\end{alignedat}
\end{equation}
Note that for the case $p = q$ this setting was already considered in \cite{pruessbuch}. We refer to Section 6.6 therein for a motivation for these spaces.
\subsection*{Spaces for the data.}
Suppose we are given a solution $(u,h)$ in the above classes. We want to find necessary conditions for the data. Clearly, $f_u \in L_p(0,T;L_q(\Omega))$. Also, by trace theory,
\begin{equation}
\nabla u |_{\partial\Omega} \in  F^{1/2-1/2q}_{pq} (0,T; L_q(\partial\Omega) ) \cap L_p(0,T; W^{1-1/q}_q(\partial \Omega \backslash \partial \Sigma)).
\end{equation}
Note that the function $\nabla u |_{\partial\Omega}$ is only $W^{1-1/q}_q$ in space. Since $q <2$, it does not possess a trace on $\partial\Sigma$. Hence we obtain that
\begin{equation}
f_n \in  F^{1/2-1/2q}_{pq} (0,T; L_q(\partial\Omega) ) \cap L_p(0,T; W^{1-1/q}_q(\partial \Omega)).
\end{equation}
We also have by classical trace theory that
\begin{equation} \label{3495tttttrhghgh865486}
 u |_{\Sigma} \in  F^{1-1/2q}_{pq} (0,T; L_q(\Sigma) ) \cap L_p(0,T; W^{2-1/q}_q(\Sigma)).
\end{equation}
Since also $\Delta h$ enjoys this regularity, $g$ should belong to this class. Note that we have shown in Lemma \ref{34ggg65456546} that
\begin{align}
\nabla h |_{\partial\Sigma} \in & \; {F}^{5/4- 1/q + 1/4(3q-1)}_{pq}(0,T;  L_q (\partial \Sigma))  \cap {F}^{1-1/2q}_{pq}(0,T; W^{1-1/q}_q(\partial \Sigma)) \\
 &\quad \cap L_p(0,T ; W^{3-2/q}_q(\partial \Sigma)),
\end{align}
whence we take $b$ to belong to this class. 
Furthermore, we directly obtain that
\begin{equation}
f_h \in F^{1/2 - 1/2q}_{pq}(0,T ; L_q(\Sigma)) \cap L_p(0,T; W^{1-1/q}_q(\Sigma)) .
\end{equation}
 By classical real interpolation method, we obtain $u_0 \in W^{2-2/q}_q(\Omega \backslash \Sigma)$ and Lemma \ref{24345hhhhhsss} entails $h_0 \in B^{4-1/q-2/p}_{qp}(\Sigma)$.

\subsection*{Compatibility conditions.}
One important feature of our $L_p-L_q$ theory is that there is no additional compatibility condition for $(f_n,g)$ on $\partial \Sigma$, since the function $\nabla u|_\Sigma$ does not have a well-defined trace on $\partial \Sigma$ since $q < 2$, cf. \eqref{3495tttttrhghgh865486}. Therefore we can reduce the amount of compatibility conditions at the contact line to a minimum. These read
\begin{enumerate}
\item $(n_{\partial\Omega} | \nabla u_0) = f_n(0)$, on $\partial\Omega$,
\item $\ljump u_0 \rjump = 0$, on $\Sigma$,
\item $u_0|_\Sigma - \Delta h_0 = g(0),$ on $\Sigma$,
\item $(n_{\partial\Sigma} | \nabla h_0) = b(0)$, on $\partial\Sigma$,
\item $\ljump \partial_3 u_0 \rjump -f_h (0)\in \mathsf{tr}|_{t=0} \left[  F^{1/2 - 1/2q}_{pq} (0,T; L_q(\Sigma)) \cap L_p(0,T; W^{2-2/q}_q(\Sigma))  \right] $.
\end{enumerate}
Let us comment on these. The first four conditions simply follow by evaluating the respective equations at time $t = 0$. Note that the functions have enough time regularity such that the traces are well defined. Let us explain the last one in more detail. From \eqref{43534534534534534534B}, standard trace theory for $u$ entails
\begin{equation}
\ljump \partial_3 u \rjump -f_h \in  F^{1/2-1/2q}_{pq} (0,T; L_q(\Sigma) ) \cap L_p(0,T; W^{1-1/q}_q(\Sigma)).
\end{equation}
Additionally, however, $\ljump \partial_3 u \rjump - f_h = -\partial_t h - \omega h$ by the equations. Recall that
\begin{equation}
\begin{alignedat}{1}
h \in F^{3/2-1/2q}_{pq}&(0,T; L_q (\Sigma))  \cap F^{1-1/2q}_{pq}(0,T; H^2_q(\Sigma)) \into \\
&\into H^1_p(0,T; W^{2-2/q}_q(\Sigma)),
\end{alignedat}
\end{equation}
 hence $-\partial_t h - \omega h \in L_p(0,T; W^{2-2/q}_q(\Sigma))$. This yields that
 \begin{equation}
 \ljump \partial_3 u_0 \rjump - f_h (0) \in \mathsf{tr}|_{t=0} \left[  F^{1/2 - 1/2q}_{pq} (0,T; L_q(\Sigma)) \cap L_p(0,T; W^{2-2/q}_q(\Sigma))  \right].
 \end{equation}
 We remark that we can explicitly calculate the trace space as in the proof of Lemma \ref{24345hhhhhsss} to the result
\begin{equation} \label{34534453453454354jj}
\ljump \partial_3 u_0 \rjump - f_h(0) \in B^{2-2/q - 4/p}_{qp}(\Sigma).
\end{equation}

\subsection*{Maximal $L_p-L_q$-regularity.} We now want to show that the necessary conditions derived above together with the compatibility conditions are also sufficient. Suppose we are given $(f_u, f_n, g, f_h, b, u_0,h_0)$ satisfying the above conditions. We now want to solve \eqref{43453454445444444454}.

Let us first reduce to $h_0 = 0$ and $\ljump \partial_3 u_0 \rjump - f_h (0) = 0$. The idea stems from Section 6.2 in \cite{pruessbuch}.
So let $h_0 \in B^{4-1/q - 2/p}_{qp}(\R^{n-1})$ and $h_1 := f_h(0) - \ljump \partial_3 u_0 \rjump - \omega h_0 \in B^{2-2/q-4/p}_{qp}(\R^{n-1})$ be the extensions of $h_0$ and $f_h(0) - \ljump \partial_3 u_0 \rjump - \omega h_0$ to $\R^{n-1}$. We define the operators $A := 1 + \omega - \Delta$ and $B := 1+ \omega + \Delta^2$. It is well-known that these are negative generators of exponentially stable analytic $C_0$-semigroups with maximal $L_p$-regularity on $L_q(\R^{n-1})$, hence also on $W^s_q(\R^{n-1})$, $s > 0$.
Let us define
\begin{equation} \label{345345jjjjj345345}
\bar h(t) := (2e^{-At} - e^{-2At})h_0 + (e^{-Bt} - e^{-2Bt})B^{-1} h_1.
\end{equation}
Then clearly $\bar h(0) = h_0$, $(\partial_t + \omega)\bar h(0) = h_1 + \omega h_0$. Note that the function $e^{-At}h_0$ solves the evolution problem
\begin{equation}
\partial_t h + Ah = 0, \; t > 0, \quad h(0) = h_0.
\end{equation}
In the proof of Lemma \ref{24345hhhhhsss} we actually showed that
$
e^{-At}h_0 \in \mathbb E_h(T).
$
It remains to prove that $e^{-Bt} B^{-1} h_1 \in \mathbb E_h(T)$. Consider $B$ on the base space $W^{2-2/q}_q(\R^{n-1})$ with natural domain $W^{6-2/q}_q(\R^{n-1})$. The function $e^{-Bt}B^{-1}h_1$ then solves the evolutionary problem
\begin{equation}
\partial_t h + Bh = 0, \; t > 0, \quad h(0) = B^{-1} h_1.
\end{equation}
By maximal regularity,
\begin{equation}
e^{-Bt} B^{-1} h_1 \in H^1_p(0,T; W^{2-2/q}_q(\R^{n-1})) \cap L_p(0,T; W^{6-2/q}_q(\R^{n-1}) ).
\end{equation}
Note that by construction, $\partial_t e^{-Bt} B^{-1} h_1 = -e^{-Bt} h_1$. Since
\begin{equation}
H^1_p(0,T; W^{2-2/q}_q(\R^{n-1})) \cap L_p(0,T; W^{6-2/q}_q(\R^{n-1}) ) \into F^{1/2 - 1/2q}_{pq}(0,T; H^4_q(\R^{n-1}) ),
\end{equation}
we obtain $-e^{-Bt} h_1 \in F^{1/2 - 1/2q}_{pq}(0,T; L_q(\R^{n-1}) )$, hence $e^{-Bt} B^{-1} h_1 \in \mathbb E_h(T)$.

By subtracting $\bar h$ from problem \eqref{43453454445444444454} we reduce to $h_0 = 0$ and trivialize the last compatibility condition to $\ljump \partial_3 u_0 \rjump - f_h(0) = 0$. Due to this now generated compatibility condition between $u_0$ and $f_h(0)$ we may solve the transmission problem
\begin{equation}
\begin{cases}
\begin{alignedat}{2}
\partial_t \bar u + \bar \omega \bar u - \Delta \bar u &= 0, \quad\quad\quad &\text{in } \R^{n} \backslash \tilde \Sigma, \\
\ljump \bar u \rjump &= 0, &\text{on } \tilde \Sigma, \\
\ljump \partial_3 \bar u \rjump &= f_h, & \text{on } \tilde \Sigma, \\
\bar u(0) &= u_0, &\text{in } \R^{n} \backslash \tilde \Sigma,
\end{alignedat}
\end{cases}
\end{equation}
in an $L_p-L_q$-theory by a function $\bar u \in H^1_p(0,T; L_q(\R^n)) \cap L_p(0,T; H^2_q(\R^n \backslash \tilde \Sigma))$ using Theorem 6.5.1 in \cite{pruessbuch}.
Here again $u_0$ and $f_h$ are extensions and $\tilde \Sigma := \{ x \in \R^n : x_n = 0 \}$.
Surely we can restrict $\bar u$ again back to the half space $\{ x_1 > 0 \}$. We have now so far reduced \eqref{43453454445444444454} to the problem

\begin{equation}  \label{345345345345jjjj5}
\begin{cases} \begin{alignedat}{2}
\partial_t u + \omega u - \Delta u &= f_u, \qquad\qquad\qquad &\text{in } \Omega \backslash \Sigma, \\
(n_{\partial\Omega} | \nabla u) &= f_n, &\text{in } \partial\Omega, \\
\ljump u\rjump =0, \quad u|_\Sigma - \Delta h &= g, &\text{on } \Sigma, \\
(n_{\partial\Omega} | \nabla h) &= b, &\text{in } \partial\Sigma, \\
\partial_t h + \omega h + \ljump \partial_3 u \rjump &= 0, &\text{on } \Sigma, \\
u(0) &= 0, &\text{in } \Omega \backslash \Sigma, \\
h(0) &= 0, &\text{on } \Sigma, \\
\end{alignedat} \end{cases}
\end{equation}
for possibly modified right hand sides which we do not relabel. We want to note at this point that we have vanishing traces as compatibility conditions in \eqref{345345345345jjjj5},
\begin{equation}
f_n (0) = 0, \quad g(0) = 0, \quad b(0) = 0.
\end{equation}
By the theory for elliptic equations, cf. \cite{pruessbuch}, we may find some 
\begin{equation}
\tilde u \in H^1_p(0,T; L_q(\Omega)) \cap L_p(0,T; H^2_q(\Omega ))
\end{equation}
in an $L_p-L_q$-theory solving
\begin{equation} 
\begin{cases} \begin{alignedat}{2}
\partial_t \tilde u  + \tilde \omega \tilde u - \Delta \tilde u &= 0, \qquad\qquad\qquad &\text{in } \Omega, \\
(n_{\partial\Omega} | \nabla \tilde u) &= f_n, &\text{in } \partial\Omega, \\
\tilde u (0) &= 0, &\text{in } \Omega,
\end{alignedat} \end{cases}
\end{equation}
since $f_n(0) = 0$. Hereby we used the fact that $q < 2$ and that therefore $f_n$ does not possess a trace on $\partial\Sigma$.
Also, by Lemma \ref{34ggg65456546} we find some $\tilde h$ in the proper regularity class satisfying
\begin{equation}
(n_{\partial\Sigma} | \nabla \tilde h) = b,\quad  \text{in } \partial\Sigma, \\
\end{equation}
since $b(0) = 0$. Subtracting $(\tilde u, \tilde h)$ we may again reduce the problem to
\begin{equation}  \label{344dfgdfgdfgdfgvvv4}
\begin{cases} \begin{alignedat}{2}
\partial_t u + \omega u - \Delta u &= f_u, \qquad\qquad\qquad &\text{in } \Omega \backslash \Sigma, \\
(n_{\partial\Omega} | \nabla u) &= 0, &\text{in } \partial\Omega, \\
\ljump u\rjump =0, \quad u|_\Sigma - \Delta h &= g, &\text{on } \Sigma, \\
(n_{\partial\Omega} | \nabla h) &= 0, &\text{in } \partial\Sigma, \\
\partial_t h + \omega h + \ljump \partial_3 u \rjump &= -\partial_t \tilde h -\omega \tilde h, &\text{on } \Sigma, \\
u(0) &= 0, &\text{in } \Omega \backslash \Sigma, \\
h(0) &= 0, &\text{on } \Sigma. \\
\end{alignedat} \end{cases}
\end{equation}
We note that in particular
\begin{equation} \label{345345345gg}
 -\partial_t \tilde h -\omega \tilde h \in F^{1-1/2q}_{pq} (0, T; L_q (\Sigma)) \cap L_p(0,T; W^{2-2/q}_q(\Sigma)) ,
\end{equation}
since $\ljump \partial_3 \tilde u \rjump = 0$. Recall that $g$ has regularity
\begin{equation}
g \in {_0 F}^{1-1/2q}_{pq} (0,T; L_q(\Sigma) ) \cap L_p(0,T; W^{2-1/q}_q(\Sigma)).
\end{equation}
It is now an essential feature that $q <2$. 
As in \cite{mulsekpaper12} we reflect $(u,h,f_u,g,-\partial_t \tilde h -\omega \tilde h)$ evenly in $x_1$-direction across the boundary of $\Omega$. We are therefore left to solve a full-space problem with flat interface, where the restriction of the solutions back to $\Omega$ gives back the solution of the original problem \eqref{344dfgdfgdfgdfgvvv4}. This is due to the fact that by the even reflection, the conditions $\eqref{344dfgdfgdfgdfgvvv4}_2$ and $\eqref{344dfgdfgdfgdfgvvv4}_4$ hold automatically. We are left to solve the full-space problem
\begin{equation}  \label{3gghjrrerercchj4}
\begin{cases} \begin{alignedat}{2}
\partial_t u + \omega u - \Delta u &= f_u, \qquad\qquad\qquad &\text{in } \R^n \backslash \tilde \Sigma, \\
\ljump u\rjump =0, \quad u|_{\tilde\Sigma} - \Delta h &= g, &\text{on } \tilde \Sigma, \\
\partial_t h + \omega h + \ljump \partial_3 u \rjump &= -\partial_t \tilde h -\omega \tilde h, &\text{on } \tilde \Sigma, \\
u(0) &= 0, &\text{in } \R^n \backslash \tilde\Sigma, \\
h(0) &= 0, &\text{on } \tilde\Sigma, \\
\end{alignedat} \end{cases}
\end{equation}
where we again for simplicity did not relabel the functions. Let $\tilde h_0 := 0$, $\tilde h_1 := -\partial_t \tilde h (0) - \omega \tilde h(0)$. Recalling \eqref{345345345gg}, we may repeat the first step in the proof and subtract the function
\begin{equation}
\bar {\bar h} (t) := (2e^{-At} - e^{-2At})\tilde h_0 + (e^{-Bt} - e^{-2Bt})B^{-1} \tilde h_1
\end{equation}
constructed in \eqref{345345jjjjj345345}. Then again $\bar {\bar h} \in \mathbb E_h(T)$, $\bar {\bar h}(0) = \tilde h_0 = 0$, $(\partial_t + \omega) \bar {\bar h} (0) = \tilde h_1$, and we reduce to
\begin{equation}  \label{3ffffrrrppppphj4}
\begin{cases} \begin{alignedat}{2}
\partial_t u + \omega u - \Delta u &= f_u, \qquad\qquad\qquad &\text{in } \R^n \backslash \tilde \Sigma, \\
\ljump u\rjump =0, \quad u|_{\tilde\Sigma} - \Delta h &= g, &\text{on } \tilde \Sigma, \\
\partial_t h + \omega h + \ljump \partial_3 u \rjump &= f_h, &\text{on } \tilde \Sigma, \\
u(0) &= 0, &\text{in } \R^n \backslash \tilde\Sigma, \\
h(0) &= 0, &\text{on } \tilde\Sigma, \\
\end{alignedat} \end{cases}
\end{equation}
where $f_h (0) = 0$. This allows then in turn again to solve a transmission problem
\begin{equation}  
\begin{cases} \begin{alignedat}{2}
\partial_t \hat u + \hat \omega \hat u - \Delta \hat u &= 0, \qquad\qquad\qquad &\text{in } \R^n \backslash \tilde \Sigma, \\
\ljump \hat u \rjump &= 0, &\text{on } \tilde \Sigma, \\
 \ljump \partial_3 \hat u \rjump &= f_h, &\text{on } \tilde \Sigma, \\
\hat u (0) &= 0, &\text{in }  \R^n \backslash \tilde \Sigma, \\
\end{alignedat} \end{cases}
\end{equation}
by a function $\hat u \in H^1_p(0,T; L_q(\R^n)) \cap L_p(0,T; H^2_q(\R^n \backslash \tilde \Sigma))$, cf. \cite{pruessbuch}.
Subtracting $\hat u$, we are left to solve
\begin{equation}  \label{3fffdfggdfggdfgj4}
\begin{cases} \begin{alignedat}{2}
\partial_t u + \omega u - \Delta u &= f_u, \qquad\qquad\qquad &\text{in } \R^n \backslash \tilde \Sigma, \\
\ljump u\rjump =0, \quad u|_{\tilde\Sigma} - \Delta h &= g, &\text{on } \tilde \Sigma, \\
\partial_t h + \omega h + \ljump \partial_3 u \rjump &= 0, &\text{on } \tilde \Sigma, \\
u(0) &= 0, &\text{in } \R^n \backslash \tilde\Sigma, \\
h(0) &= 0, &\text{on } \tilde\Sigma. \\
\end{alignedat} \end{cases}
\end{equation}
Let us now solve an auxiliary parabolic problem on the upper half space 
\begin{equation}  
\begin{cases} \begin{alignedat}{2}
\partial_t u_+ + \omega_+ u_+ - \Delta u_+ &= 0, \qquad\qquad\qquad &\text{in } \R^n_+, \\
 u_+|_{\tilde\Sigma}  &= g, &\text{on } \tilde \Sigma, \\
u_+(0) &= 0, &\text{in } \R^n_+, \\
\end{alignedat} \end{cases}
\end{equation}
by a function $u_+ \in H^1_p(0,T; L_q (\R^n_+)) \cap L_p(0,T; H^2_q(\R^n_+))$, cf. \cite{pruessbuch}. Note that the necessary compatibility condition $g(0) = 0$ is satisfied. Using higher-order reflection techniques we may extend the function $u_+$ in space to a function $\tilde u_+$ on all of $\R^n$ with the same regularity. Hence $\tilde u_+ (0) = 0$, $\ljump \tilde u_+ \rjump = 0$, and $\ljump \partial_3 \tilde u_+ \rjump = 0$. After subtracting $\tilde u_+$, we have arrived at the problem
\begin{equation}  \label{3ggdfgfgfghcvcvcvdffffdfcvcvj4}
\begin{cases} \begin{alignedat}{2}
\partial_t u + \omega u - \Delta u &= f_u, \qquad\qquad\qquad &\text{in } \R^n \backslash \tilde \Sigma, \\
\ljump u\rjump =0, \quad u|_{\tilde\Sigma} - \Delta h &= 0, &\text{on } \tilde \Sigma, \\
\partial_t h + \omega h + \ljump \partial_3 u \rjump &= 0, &\text{on } \tilde \Sigma, \\
u(0) &= 0, &\text{in } \R^n \backslash \tilde\Sigma, \\
h(0) &= 0, &\text{on } \tilde\Sigma. \\
\end{alignedat} \end{cases}
\end{equation}
We now want to solve \eqref{3ggdfgfgfghcvcvcvdffffdfcvcvj4} with properties of the Stefan semigroup, cf. \cite{pruessbuch}.
Corresponding to Section 6.6 in \cite{pruessbuch}, we define
\begin{equation}
X_0 := L_q(\R^n) \times W^{2-2/q}_q(\tilde \Sigma), \quad X_1 := H^2_q(\R^n \backslash \tilde \Sigma) \times W^{4-1/q}_q(\tilde \Sigma).
\end{equation}
Define the linear operator $A_S$ in $X_0$ by means of
\begin{equation}
A_S(u,h) = \begin{pmatrix}
-\Delta u \\
\ljump \partial_3 u \rjump 
\end{pmatrix},
\end{equation}
with domain
\begin{equation}
D(A_S) := \{ (u,h) \in X_1 : \ljump u \rjump = 0, \; u|_{\tilde\Sigma} - \Delta h = 0 \text{ on } \tilde\Sigma, \; \ljump \partial_3 u \rjump \in W^{2-2/q}_q(\tilde\Sigma) \}.
\end{equation}
Let $z := (u,h)$ and $f := (f_u,0)$. Then we may rewrite problem \eqref{3ggdfgfgfghcvcvcvdffffdfcvcvj4} as an abstract evolution equation in $X_0$, reading as
\begin{equation} \label{43534534534534534543534}
\frac{d}{dt} z(t) + \omega z(t) + A_S z(t) = f(t), \; t > 0, \quad z(0) = 0.
\end{equation}
By Section 6.6.3 in \cite{pruessbuch} the operator $(\omega + A_S)$ has maximal $L_q$-regularity. A general principle going back to Bourgain \cite{bourgain1984} then also gives maximal $L_p$-regularity for the abstract evolution problem \eqref{43534534534534534543534}. Therefore we may solve \eqref{43534534534534534543534} for $f := (f_u,0) \in L_p(0,T; X_0)$ to obtain a unique solution
\begin{equation}
z = (u,h) \in L_p(0,T; D(A_S)) \cap H^1_p(0,T; X_0)
\end{equation}
of \eqref{3ggdfgfgfghcvcvcvdffffdfcvcvj4}. Clearly, $u \in \mathbb E_u(T)$. Note that
\begin{equation}
\Delta h  = u|_{\tilde \Sigma} \in F^{1-1/2q}_{pq} (0,T; L_q (\tilde \Sigma)) \cap L_p(0,T; W^{2-1/q}_q (\tilde \Sigma) ).
\end{equation}
Hence we obtain $h \in F^{1-1/2q}_{pq} (0,T; H^2_q (\tilde \Sigma))$ by elliptic theory. Since also
\begin{equation}
\partial_t h = - \omega h - \ljump \partial_3 u \rjump \in F^{1/2 -1/2q}_{pq} (0,T; L_q(\tilde \Sigma)) \cap L_p(0,T; W^{2-2/q}_q(\tilde \Sigma)),
\end{equation}
we obtain $ h \in F^{3/2 -1/2q}_{pq} (0,T; L_q(\tilde \Sigma))$ and hence $h \in \mathbb E_h(T)$. In particular, also $h \in H^1_p(0,T; W^{2-2/q}_q(\tilde \Sigma))$. This also follows from the embedding $$\mathbb E_h(T) \into H^1_p(0,T; W^{2-2/q}_q(\tilde \Sigma)).$$
Let us summarize the result we have proven for this model problem.
\begin{theorem}[Maximal regularity] \label{34ztutzutzutzutzutzutzuf}
Let $n = 2,3$, $p \in (6,\infty)$, $q \in (19/10,2)\cap (2p/(p+1), 2)$, and $T \in (0,\infty)$.
Furthermore, let
$\Omega := \{ x \in \R^n : x_1 > 0 \}$, and $\Sigma := \Omega \cap \{ x_n = 0 \}.$ Then the linear problem with flat interface \eqref{43453454445444444454} has maximal $L_p-L_q$-regularity. More precisely, for every $(f_u,f_n,g,b,f_h,u_0,h_0)$ satisfying the regularity and compatibility conditions
\begin{enumerate}
\item $f_u \in L_p(0,T; L_q(\Omega))$,
\item $f_n \in F^{1/2-1/2q}_{pq} (0,T; L_q(\partial\Omega) ) \cap L_p(0,T; W^{1-1/q}_q(\partial \Omega))$,
\item $g \in F^{1-1/2q}_{pq} (0,T; L_q(\Sigma) ) \cap L_p(0,T; W^{2-1/q}_q(\Sigma))$,
\item $b \in {F}^{5/4 - 1/q + 1/4(3q-1)   }_{pq}(0,T; L_q (\partial \Sigma))  \cap {F}^{1-1/2q}_{pq}(0,T; W^{1-1/q}_q(\partial \Sigma))  \cap L_p(0,T ; W^{3-2/q}_q(\partial \Sigma))$, 
\item $f_h \in F^{1/2-1/2q}_{pq}(0,T; L_q(\Sigma)) \cap L_p(0,T; W^{1-1/q}_q(\Sigma))$, 
\item $u_0 \in W^{2-2/q}_q(\Omega \backslash \Sigma)$, 
\item $h_0 \in B^{4-1/q-2/p}_{qp}(\Sigma)$, 
\item $(n_{\partial\Omega} | \nabla u_0) = f_n(0)$, on $\partial\Omega$,
\item $\ljump u_0 \rjump = 0$, on $\Sigma$,
\item $u_0|_\Sigma - \Delta h_0 = g(0),$ on $\Sigma$,
\item $(n_{\partial\Sigma} | \nabla h_0) = b(0)$, on $\partial\Sigma$,
\item $\ljump \partial_3 u_0 \rjump - f_h(0) \in B^{2-2/q - 4/p}_{qp}(\Sigma)$,
\end{enumerate}
there exists a unique solution
\begin{gather}
u \in H^1_p(0,T; L_q(\Omega)) \cap L_p(0,T; H^2_q(\Omega \backslash \Sigma)), \\
h \in F^{3/2-1/2q}_{pq}(0,T; L_q (\Sigma))  \cap F^{1-1/2q}_{pq}(0,T; H^2_q(\Sigma))  \cap L_p(0,T ; W^{4-1/q}_q(\Sigma)),
\end{gather}
solving \eqref{43453454445444444454} on $(0,T)$.
Furthermore, the solution map $[(f_u,f_n,g,b,f_h,u_0,h_0) \mapsto (u,h)]$ is continuous with respect to these spaces. In particular, the operator norm of  $[(f_u,f_n,g,b,f_h,0,0) \mapsto (u,h)]$ is independent of $T > 0$.
\end{theorem}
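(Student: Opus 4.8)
The plan is to establish the necessary conditions and continuous dependence first, and then to prove sufficiency by a chain of reductions that successively trivialize the data until only the bare Stefan semigroup remains. Necessity of the regularity classes (1)--(7) and the compatibility relations (8)--(12) is precisely the trace analysis sketched above: feeding a putative solution $(u,h) \in \mathbb{E}_u(T) \times \mathbb{E}_h(T)$ into the time- and space-trace operators reproduces each listed space, and evaluating the equations of \eqref{43453454445444444454} at $t=0$ yields conditions (8)--(11); condition (12) comes from combining the generic trace regularity of $\ljump \partial_3 u \rjump - f_h$ with the extra identity $\ljump \partial_3 u \rjump - f_h = -\partial_t h - \omega h \in L_p(0,T; W^{2-2/q}_q(\Sigma))$ and Lemma \ref{24345hhhhhsss}. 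Uniqueness and the two-sided estimate will then follow from the construction itself: the abstract problem \eqref{43534534534534534543534} is uniquely solvable by maximal regularity, and every reduction step is a bounded bijection on the relevant data spaces.

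For sufficiency I would drive the data to zero in stages, staying attentive to the contact line. First, using the explicit two-parameter ansatz \eqref{345345jjjjj345345} built from the analytic semigroups generated by $A := 1+\omega-\Delta$ and $B := 1+\omega+\Delta^2$, I subtract a function $\bar h \in \mathbb{E}_h(T)$ with $\bar h(0)=h_0$ and $(\partial_t+\omega)\bar h(0)$ matching $f_h(0)-\ljump \partial_3 u_0\rjump$; this trivializes $h_0$ and compatibility condition (12) at once. A full-space transmission problem (Theorem 6.5.1 of \cite{pruessbuch}) absorbs the resulting flux datum. Here the standing assumption $q<2$ becomes essential: since $\nabla u|_\Sigma$ has no trace on $\partial\Sigma$, the Neumann datum $f_n$ is subject to no compatibility relation with $g$ there and can be lifted by an auxiliary Neumann parabolic problem on $\Omega$, while $b$ is removed by the continuous right inverse of the Neumann trace furnished by Lemma \ref{34ggg65456546}.

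With $f_n$, $b$, $h_0$ and the contact-line compatibility all trivialized, an even reflection in $x_1$ across $\partial\Omega$ converts the half-space problem into a full-space problem on $\R^n\setminus\tilde\Sigma$: the boundary conditions $\eqref{344dfgdfgdfgdfgvvv4}_2$ and $\eqref{344dfgdfgdfgdfgvvv4}_4$ hold automatically under the reflection, so restricting the full-space solution returns the original one. In the full-space setting I again neutralize the time trace of the forcing via \eqref{345345jjjjj345345}, remove $f_h$ by a transmission problem, and remove $g$ by solving a half-space Dirichlet heat problem and extending by higher-order reflection. This leaves the homogeneous problem \eqref{3ggdfgfgfghcvcvcvdffffdfcvcvj4}, which I rewrite as the abstract evolution equation \eqref{43534534534534534543534} for $z=(u,h)$ with the Stefan operator $A_S$ on $X_0=L_q(\R^n)\times W^{2-2/q}_q(\tilde\Sigma)$. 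By Section 6.6.3 of \cite{pruessbuch}, $\omega+A_S$ has maximal $L_q$-regularity, and Bourgain's transference principle \cite{bourgain1984} upgrades this to maximal $L_p$-regularity in time, giving a unique solution for $f=(f_u,0)$. A closing bootstrap recovers the missing $h$-regularity: $\Delta h=u|_{\tilde\Sigma}$ yields $h\in F^{1-1/2q}_{pq}(0,T;H^2_q(\tilde\Sigma))$ by elliptic theory, and $\partial_t h=-\omega h-\ljump \partial_3 u\rjump$ yields $h\in F^{3/2-1/2q}_{pq}(0,T;L_q(\tilde\Sigma))$, so $h\in\mathbb{E}_h(T)$.

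The main obstacle is the contact line. The Neumann trace of $h$ on $\partial\Sigma$ occupies a space with the exotic time-regularity index $\beta(q)=5/4-1/q+1/(4(3q-1))$, which for $q$ near $2$ strictly exceeds $1-1/2q$; pinning down this space and, above all, producing a \emph{continuous} right inverse for the Neumann trace is the technical core, and is exactly why Lemma \ref{34ggg65456546} — resting on anisotropic Triebel--Lizorkin traces in the spirit of Theorem B.1 of \cite{mulsekpaper12} and the results of \cite{johnsensickel2008} — has to be in place first. Finally, the $T$-independence of the norm of the map $[(f_u,f_n,g,b,f_h,0,0)\mapsto(u,h)]$ is built in: each reduction uses only a semigroup on the half-line, an extension with vanishing time trace, or the $T$-independent right inverse of Lemma \ref{34ggg65456546}, none of which degenerate as $T$ grows.
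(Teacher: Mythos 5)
Your proposal follows essentially the same route as the paper's own argument: the same initial reduction of $h_0$ and the contact-line compatibility via the two-semigroup ansatz \eqref{345345jjjjj345345}, the same sequence of transmission, Neumann, and Dirichlet auxiliary problems exploiting $q<2$, the removal of $b$ by the right inverse of Lemma \ref{34ggg65456546}, the even reflection in $x_1$, and the final resolution through the Stefan semigroup $A_S$ with Bourgain's transference followed by the bootstrap for the full regularity of $h$. The proof is correct as proposed.
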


Let us briefly comment on the second type of chart we obtain in a localization procedure for a cylindrical container: the quarter space problem with no interface. Here, $Q := \{ x \in \R^n : x_1 > 0, x_n < 0 \}$, where $n = 2,3$. The model problem reads as
\begin{equation} \label{434534ttttttttthhhh}
\begin{cases} \begin{alignedat}{2}
\partial_t u + \omega u - \Delta u &= f_u, \qquad\qquad\qquad &\text{in } Q, \\
\partial_1 u &= g_1, &\text{in } \partial Q \cap \{ x_1 = 0 \}, \\
\partial_n u &= g_n, &\text{in } \partial Q \cap \{ x_n = 0 \}, \\
u(0) &= u_0, &\text{in } Q. \\
\end{alignedat} \end{cases}
\end{equation}
Surely, one obtains compatibility conditions at $t = 0$, namely
\begin{equation}
g_1 (0 ) = \partial_1 u_0, \quad g_n(0) = \partial_n u_0.
\end{equation}
However, due to $q < 2$ there is no compatibility condition in space for the functions $g_1$ and $g_n$ on the triple line $\{ x_1 = 0, x_2 \in \R, x_3 = 0 \}$. This observation was already made in \cite{mulsekpaper12} in the stationary case. Following the lines of the arguments in Section A.2 in \cite{mulsekpaper12} we are able to reflect the problem to a half space problem which we can solve by classical results. For further discussion we refer to \cite{mulsekpaper12}. This way, we obtain a similar result on maximal regularity for the model problem \eqref{434534ttttttttthhhh} as Theorem \ref{34534534534545ff}.
\begin{theorem}[Maximal regularity] \label{34534534534545ff}
Let $n = 2,3$, $p \in (6,\infty)$, $q \in (19/10,2) \cap (2p/(p+1), 2)$, $T \in (0,\infty)$ and $Q$ the quarter-space as above. Let $\partial Q \cap \{ x_j = 0 \} =: \partial Q_j$.
Then for every $(g_1,g_n,u_0)$ satisfying the regularity and compatibility conditions
\begin{enumerate}
\item $f_u \in L_p(0,T; L_q(\Omega))$,
\item $g_j \in F^{1/2-1/2q}_{pq} (0,T; L_q(\partial Q_j ) ) \cap L_p(0,T; W^{1-1/q}_q(\partial Q_j)), \; j \in \{ 1,n \},$
\item $u_0 \in W^{2-2/q}_q(\Omega \backslash \Sigma)$, 
 \item $\partial_j u_0 = g_j(0), \; j \in \{1,n \}$,
\end{enumerate}
there exists a unique solution $(u,h)$ of \eqref{434534ttttttttthhhh} in the regularity classes of Theorem \ref{34534534534545ff}.
Furthermore, the solution map is continuous.
\end{theorem}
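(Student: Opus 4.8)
The plan is to mimic Section~A.2 of \cite{mulsekpaper12} and reduce the quarter-space Neumann problem \eqref{434534ttttttttthhhh} to a half-space Neumann problem solvable by classical maximal regularity, cf.\ \cite{pruessbuch}. The whole argument hinges on the observation already exploited above: since $q < 2$ we have $1-1/q < 1/q$, so the data $g_1, g_n \in L_p(0,T; W^{1-1/q}_q)$ possess \emph{no} trace on the edge $E := \{ x_1 = 0 \} \cap \{ x_n = 0 \}$, and consequently there is no spatial compatibility condition to be satisfied there.

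First I would remove the initial data. Extending $u_0 \in W^{2-2/q}_q(Q)$ to $\tilde u_0 \in W^{2-2/q}_q(\rn)$ and setting $u_* := e^{-(\omega - \Delta)t}\tilde u_0$ yields $u_* \in H^1_p(0,T; L_q(\rn)) \cap L_p(0,T; H^2_q(\rn))$ with $u_*(0) = \tilde u_0$, hence $\partial_1 u_*|_{x_1=0}(0) = \partial_1 u_0$ and $\partial_n u_*|_{x_n=0}(0) = \partial_n u_0$. Subtracting $u_*|_Q$ and relabelling, the compatibility condition~(4) forces the new boundary data to vanish at $t = 0$, so that we may assume $u_0 = 0$ and $g_1(0) = g_n(0) = 0$.

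Next I would trivialise $g_1$. Since $g_1(0) = 0$, the half-space problem on $\{ x_1 > 0 \}$ with Neumann datum $g_1$ on $\{ x_1 = 0 \}$, vanishing right-hand side and zero initial value has a unique maximal-regularity solution $w_1$ by classical results \cite{pruessbuch}. Subtracting $w_1|_Q$ makes $g_1 = 0$; this modifies $g_n$ into $g_n - \partial_n w_1|_{x_n=0}$, which by trace theory stays in the class of item~(2) and still vanishes at $t = 0$ because $w_1(0) = 0$. With $g_1 = 0$ I can now reflect evenly in $x_1$: extending $u$, $f_u$ and the modified $g_n$ evenly across $\{ x_1 = 0 \}$ turns $Q$ into the half space $H := \{ x_n < 0 \}$ and automatically enforces $\partial_1 u|_{x_1=0} = 0$. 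The resulting problem on $H$ is a pure half-space Neumann problem for $\partial_t + \omega - \Delta$ with datum on $\{ x_n = 0 \}$, which again has maximal $L_p$-$L_q$ regularity by \cite{pruessbuch}. Since the even extensions make all data symmetric under $x_1 \mapsto -x_1$ and the problem is invariant under this reflection, uniqueness forces the solution to be even in $x_1$; its restriction to $Q$ therefore satisfies the Neumann condition on $\{ x_1 = 0 \}$ and solves \eqref{434534ttttttttthhhh}.

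Uniqueness follows by running the same reduction on the difference of two solutions, which reduces to uniqueness for the classical half-space problem. Continuity of the solution map, and $T$-independence of the operator norm on the subspace of vanishing time traces, are inherited from the corresponding properties of the classical half-space solution operators together with the boundedness (uniform in $T$ on spaces with vanishing initial trace) of the extension and reflection operators used above. The main obstacle is precisely this last point: I must verify that the even extension in $x_1$ of $g_n \in L_p(0,T; W^{1-1/q}_q(\partial Q \cap \{ x_n = 0 \}))$, together with its time-anisotropic companion in $F^{1/2-1/2q}_{pq}(0,T; L_q)$, remains in the same anisotropic regularity class on $\partial H = \{ x_n = 0 \}$. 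This is exactly where $q < 2$ enters: because $1-1/q < 1/q$ the datum carries no trace on the edge $E$, so the even reflection across $E$ introduces no jump and no compatibility obstruction, and it is bounded on the relevant anisotropic Triebel-Lizorkin spaces. This is the time-dependent, quarter-space analogue of the stationary computation carried out in Section~A.2 of \cite{mulsekpaper12}, whose argument I would transcribe to the present setting.
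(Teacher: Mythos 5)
Your proposal is correct and follows essentially the same route as the paper, which itself only sketches the argument by deferring to the reflection technique of Section~A.2 in \cite{mulsekpaper12} and classical half-space results: reduce the initial data, use that $q<2$ (so $1-1/q<1/q$ and the boundary data carry no trace on the edge, hence no spatial compatibility condition and no obstruction to extension/reflection), and reflect evenly across $\{x_1=0\}$ to land on a classical half-space Neumann problem whose symmetric solution restricts to the quarter space. The only point you leave implicit --- extending $g_1$ from the half-hyperplane $\partial Q_1$ to the full hyperplane $\{x_1=0\}$ before solving the auxiliary half-space problem --- is covered by exactly the same no-edge-trace mechanism you already invoke for $g_n$, so the argument is complete.
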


\subsection{Bent half-space problems and localization procedure}
We can extend the result to slightly bent half-spaces by a perturbation argument. This way we obtain a similar result as in the previous subsection if only the $C^1$-norm of the curves describing the bent half-spaces are small enough. Then afterwards a so-called localization procedure gives maximal regularity for the linear problem inside a bounded, cylindrical container. For a detailed derivation in case of a quasi-stationary Stefan problem we refer to Section 4.4 in \cite{mulsekpaper12}. We omit the details here.

\section{Nonlinear Well-Posedness}
In this section we show nonlinear well-posedness of the transformed version of the Stefan problem \eqref{34905384953745034875} given by \eqref{978987456885845fgdrerggregrrgrgg9685} in an $L_p-L_q$-setting. For convenience we recall that the transformed system is given by
\begin{equation}\label{345345345435345345}  \begin{cases}
\begin{alignedat}{2}
\partial_t u -  \Delta u  &= F_u (u,h), & \text{in } \Omega \backslash \Sigma, \\
\ljump u \rjump = 0, \quad  u|_\Sigma - \sigma \Delta_{x'} h &= F_\kappa (h), &\text{on } \Sigma, \\
(n_{\partial\Omega} | \nabla u)  &=0, \quad\quad\quad\quad& \text{on } \partial\Omega \backslash \partial\Sigma, \\ 
\partial_t h +  \ljump \partial_3 u \rjump &= F_\Sigma(u,h), \quad\quad\quad & \text{on } \Sigma, \\
( n_{\partial \Sigma}| \nabla h   ) &= 0, &\text{on } \partial \Sigma, \\
u(0) &= u_0, &\text{in } \Omega \backslash \Sigma, \\
h(0) &= h_0, &\text{on } \Sigma.
\end{alignedat} \end{cases}
\end{equation}
where 
\begin{equation}
\begin{alignedat}{1}
F_u (u,h) &:=  (\Delta_h - \Delta)u + Du \cdot \partial_t \Theta_h^{-1}, \\
F_\kappa(h) &:= \sigma \left[ \div \left( \frac{\nabla h}{ \sqrt{ 1 + |\nabla h|^2} } \right) - \Delta h \right], \\
F_\Sigma (u,h) &:= \ljump \partial_3 u - n_\Gamma \cdot \nabla_h u \rjump + \partial_t h (e_3| e_3 - n_\Gamma ).
\end{alignedat}
\end{equation}
Let us precisely state the function spaces we consider. Let
\begin{gather}
\mathbb E_u (T) := H^1_p(0,T; L_q(\Omega)) \cap L_p(0,T; H^2_q(\Omega \backslash \Sigma)), \\
\mathbb E_h (T) := F^{3/2-1/2q}_{pq}(0,T; L_q (\Sigma))  \cap F^{1-1/2q}_{pq}(0,T; H^2_q(\Sigma))  \cap L_p(0,T ; W^{4-1/q}_q(\Sigma)).
\end{gather}
The main result is the following.
\begin{theorem}
Let $p \in (6,\infty)$, $q \in (19/10, 2) \cap (2p/(p+1), 2)$. Then there is some $\delta_0 > 0$, such that if
\begin{equation}
|u_0 |_{W^{2-2/q}_q(\Omega \backslash \Sigma)} + |h_0|_{B^{4-1/q - 2/p}_{qp}(\Sigma)} \leq \delta
\end{equation}
for some $0 < \delta \leq \delta_0$,
and $(u_0,h_0)$ satisfy the compatibility conditions
\begin{enumerate}
\item $\ljump u_0 \rjump = 0$,
\item $ u_0|_\Sigma - \sigma \Delta h_0 = F_\kappa (h_0)$,
\item $(n_{\partial\Omega} | \nabla u_0) = 0$,  
\item $(n_{\partial \Sigma} | \nabla h_0) = 0$,
\item $\ljump \partial_3 u_0 \rjump - F_\Sigma (u_0,h_0) \in B^{2-2/q-4/p}_{qp}(\Sigma)$,
\end{enumerate}
there exists $\tau = \tau(\delta) >0$, such that the transformed Stefan problem \eqref{345345345435345345} has a unique strong solution $(u,h) \in \mathbb E_u(\tau) \times \mathbb E_h(\tau)$ on $(0, \tau)$.
\end{theorem}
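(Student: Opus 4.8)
The plan is to solve \eqref{345345345435345345} by a fixed-point argument built on the maximal regularity of the linearization. First I would add a damping term $\omega$ on both sides, rewriting the principal parts as $\partial_t u + \omega u - \Delta u$ and $\partial_t h + \omega h + \ljump \partial_3 u \rjump$, and moving the compensating terms $\omega u$, $\omega h$ into the right-hand sides. With $\omega > 0$ fixed large enough, the linear operator from Theorem \ref{34ztutzutzutzutzutzutzuf}, transported to the bounded cylinder $\Omega = \Sigma \times (L_1,L_2)$ by the localization procedure sketched above, provides a bounded solution operator whose norm on data with vanishing time traces is independent of $T$. Since $\sigma$ is a fixed positive constant it is harmless to absorb it, so the linear problem underlying \eqref{345345345435345345} is exactly of the form covered by the maximal regularity theorem, with $f_n = 0$ and homogeneous contact-angle data $b = 0$.

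Next I would remove the initial data. Using the trace characterizations of Lemma \ref{24345hhhhhsss} and Lemma \ref{34ggg65456546} together with the compatibility conditions (1)--(5), I would solve the linear problem with the prescribed $(u_0,h_0)$ and vanishing forcing to obtain a reference pair $(u_*,h_*) \in \mathbb E_u(T) \times \mathbb E_h(T)$. The compatibility conditions are precisely what make this reference admissible, and by continuity of the linear solution map its norm is controlled by the data size $\delta$. Writing the sought solution as $(u_*,h_*) + (v,w)$ then reduces to a problem with zero initial data for $(v,w)$, so that $(v,w)$ lives in the subspace ${}_0\mathbb E(T)$ of functions with vanishing time traces at $t=0$; on this subspace the solution operator has $T$-independent norm, call it $M$.

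I would then define the fixed-point map $\Phi$ sending $(v,w) \in {}_0\mathbb E(T)$ to the solution of the linear problem whose right-hand sides are the nonlinearities $F_u$, $F_\kappa$, $F_\Sigma$ evaluated at $(u_*+v,\, h_*+w)$ (with the reference contribution subtracted), and seek a fixed point in a small closed ball $\overline{B}_r \subset {}_0\mathbb E(T)$. The crucial structural fact is that all three nonlinearities vanish to higher order: $F_\kappa(h)$ is at least quadratic in $\nabla h$, while $F_u$ and $F_\Sigma$ carry coefficients that vanish at $h = 0$ (they are built from $\Delta_h - \Delta$, $\partial_t\Theta_h^{-1}$, and $e_3 - n_\Gamma$, all of which are $O(h) + O(\nabla h)$). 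Using the embedding \eqref{34895834573485734857485743}, namely $\mathbb E_h(T) \hookrightarrow BUC([0,T];C^2(\Sigma))$, together with Nemytskii and product estimates in the relevant Triebel--Lizorkin and Sobolev classes, I would show that $\Phi$ maps $\overline{B}_r$ into itself and is a strict contraction, whence existence and uniqueness on the ball, provided $\delta$ and the horizon $\tau$ are small. Smallness enters twice: the data size $\delta$ keeps $(u_*,h_*)$ small in $BUC([0,T];C^2(\Sigma))$, and shrinking $\tau$ exploits the vanishing time traces to make the operator norms of the quadratic Nemytskii maps small.

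The hard part will be verifying that each nonlinearity lands in the exact data space required by Theorem \ref{34ztutzutzutzutzutzutzuf} with the correct quantitative estimate. Two points deserve the most care. First, $F_\kappa(h)$ must be shown to belong to $F^{1-1/2q}_{pq}(0,T;L_q(\Sigma)) \cap L_p(0,T;W^{2-1/q}_q(\Sigma))$; being a quasilinear second-order expression in $h$, this requires sharp product and composition estimates using only the $C^2$ regularity from \eqref{34895834573485734857485743} for the top-order terms. Second, and most delicate, is compatibility condition (5): the combination $\ljump \partial_3 u_0 \rjump - F_\Sigma(u_0,h_0)$ must lie in $B^{2-2/q-4/p}_{qp}(\Sigma)$, and after reducing to vanishing time traces one must check that $F_\Sigma(u_*+v,h_*+w)$ has the fractional time regularity $F^{1/2-1/2q}_{pq}$ while its trace at $t=0$ fits the reduced compatibility. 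The absence of a spatial trace for $\nabla u|_\Sigma$ when $q<2$, which is what eliminates the contact-line compatibility in the \emph{linear} problem, is again what keeps this manageable; verifying that the nonlinear map respects this structure, and does so with a constant small for small $\tau$, is where the main technical effort lies.
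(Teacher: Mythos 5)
Your proposal follows essentially the same route as the paper, whose proof is in fact omitted and simply described as ``maximal regularity of the underlying linear problem together with a contraction argument via Banach's fixed point principle'' following the scheme of the cited reference: reduction to vanishing initial traces via the compatibility conditions, a $T$-independent bound for the solution operator on the zero-trace subspace, and contraction using the quadratic/higher-order vanishing of $F_u$, $F_\kappa$, $F_\Sigma$ together with the embedding $\mathbb E_h(T) \hookrightarrow BUC([0,T];C^2(\Sigma))$. Your write-up correctly identifies the delicate points (the data space for $F_\kappa$, the compatibility condition for $F_\Sigma$, and the role of $q<2$ at the contact line), so it is a faithful, more detailed rendering of the argument the paper intends.
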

\begin{proof}
The proof uses maximal regularity of the underlying linear problem together with a contraction argument via Banach's fixed point principle. It follows the same lines as the proof in \cite{mulsekpaper12}. We omit the details.
\end{proof}
\section{The Stefan semigroup} \label{jklughjkbgzhg345}
We now want to understand the structure of the underlying semigroup to the Stefan problem with ninety degree contact angle. We will extract the semigroup in the same way as is done in Section 6.6 in \cite{pruessbuch} in the case of closed interfaces.

Let $\Omega \subset \R^n$ be a bounded, smooth domain and $\Sigma$ a smooth surface inside with ninety degree contact angle as before.
Again we consider the linear problem
\begin{equation} \label{43453dfgdfgdfg454445444444dfgdfg454}
\begin{cases} \begin{alignedat}{2}
\partial_t u + \omega u - \Delta u &= f_u, \qquad\qquad\qquad &\text{in } \Omega \backslash \Sigma, \\
(n_{\partial\Omega} | \nabla u) &= f_n, &\text{in } \partial\Omega, \\
\ljump u\rjump =0, \quad u|_\Sigma - \Delta h &= g, &\text{on } \Sigma, \\
(n_{\partial\Omega} | \nabla h) &= b, &\text{in } \partial\Sigma, \\
\partial_t h + \omega h + \ljump \partial_3 u \rjump &= f_h, &\text{on } \Sigma, \\
u(0) &= u_0, &\text{in } \Omega \backslash \Sigma, \\
h(0) &= h_0, &\text{on } \Sigma. \\
\end{alignedat} \end{cases}
\end{equation}
Let $p \in (6,\infty)$, $q \in (19/10,2) \cap (2p/(p+1), 2p)$. We are now interested in strong solutions
\begin{equation} \label{43534534534534534534A}
\begin{alignedat}{2}
h \in \mathbb E_h^S (T) := H^1_p(0,T; W^{2-2/q}_q (\Sigma))  \cap F^{1-1/2q}_{pq}(0,T; H^2_q(\Sigma))  \cap L_p(0,T ; W^{4-1/q}_q(\Sigma))
\end{alignedat}
\end{equation}
and
\begin{equation}
\begin{alignedat}{2} \label{43534534534534534534B}
u \in \mathbb E_u^S (T) := & H^1_p(0,T; L_q(\Omega)) \cap L_p(0,T; H^2_q(\Omega \backslash \Sigma)) \\ &\quad \cap \{ u : \ljump \partial_ 3 u \rjump \in L_p(0,T; W^{2-2/q}_q(\Sigma)) \}.
\end{alignedat}
\end{equation}
Note that we now choose a different solution space for $h$ compared to the previous sections.
We obtain a different condition for $f_h$, namely $f_h \in L_p(0,T; W^{2-2/q}_q(\Sigma))$. 
Again we obtain compatibility conditions
\begin{enumerate}
\item $(n_{\partial\Omega} | \nabla u_0) = f_n(0)$, on $\partial\Omega$,
\item $\ljump u_0 \rjump = 0$, on $\Sigma$,
\item $u_0|_\Sigma - \Delta h_0 = g(0),$ on $\Sigma$,
\item $(n_{\partial\Sigma} | \nabla h_0) = b(0)$, on $\partial\Sigma$,
\item $\ljump \partial_3 u_0 \rjump \in \mathsf{tr}|_{t=0} \left[  F^{1/2 - 1/2q}_{pq} (0,T; L_q(\Sigma)) \cap L_p(0,T; W^{2-2/q}_q(\Sigma))  \right] $.
\end{enumerate}
 Let us explain the last one in more detail. Standard trace theory for $u \in \mathbb E_u^S(T)$ entails
\begin{equation}
\ljump \partial_3 u \rjump \in  F^{1/2-1/2q}_{pq} (0,T; L_q(\Sigma) ) \cap L_p(0,T; W^{1-1/q}_q(\Sigma)).
\end{equation}
Additionally, however, $\ljump \partial_3 u \rjump \in L_p(0,T; W^{2-2/q}_q(\Sigma))$ by \eqref{43534534534534534534B}. Hence the last compatibility condition is shown. 

We obtain maximal regularity also with respect to these function spaces. The proof is an adaption of the proof found in Section 6.6 in \cite{pruessbuch}. 

\begin{theorem}[Maximal regularity] \label{34ztutsdfddgzutzutzutzutzutzuf}
Let $n = 2,3$, $p \in (6,\infty)$, $q \in (19/10,2)\cap (2p/(p+1), 2)$ and $T \in (0,\infty)$.
Furthermore, let
$\Omega$ and $\Sigma$ be as above.
Then for every $(f_u,f_n,g,b,f_h,u_0,h_0)$ satisfying the regularity and compatibility conditions
\begin{enumerate}
\item $f_u \in L_p(0,T; L_q(\Omega))$,
\item $f_n \in F^{1/2-1/2q}_{pq} (0,T; L_q(\partial\Omega) ) \cap L_p(0,T; W^{1-1/q}_q(\partial \Omega))$,
\item $g \in F^{1-1/2q}_{pq} (0,T; L_q(\Sigma) ) \cap L_p(0,T; W^{2-1/q}_q(\Sigma))$,
\item $b \in {F}^{5/4 - 1/q + 1/4(3q-1)   }_{pq}(0,T; L_q (\partial \Sigma))  \cap {F}^{1-1/2q}_{pq}(0,T; W^{1-1/q}_q(\partial \Sigma))  \cap L_p(0,T ; W^{3-2/q}_q(\partial \Sigma))$, 
\item $f_h \in  L_p(0,T; W^{2-2/q}_q(\Sigma))$, 
\item $u_0 \in W^{2-2/q}_q(\Omega \backslash \Sigma)$, 
\item $h_0 \in B^{4-1/q-2/p}_{qp}(\Sigma)$, 
\item $(n_{\partial\Omega} | \nabla u_0) = f_n(0)$, on $\partial\Omega$,
\item $\ljump u_0 \rjump = 0$, on $\Sigma$,
\item $u_0|_\Sigma - \Delta h_0 = g(0),$ on $\Sigma$,
\item $(n_{\partial\Sigma} | \nabla h_0) = b(0)$, on $\partial\Sigma$,
\item $\ljump \partial_3 u_0 \rjump - f_h(0) \in B^{2-2/q - 4/p}_{qp}(\Sigma)$,
\end{enumerate}
there exists a unique solution $(u,h) \in \mathbb E_u^S (T) \times \mathbb E_h^S (T)$
solving \eqref{43453dfgdfgdfg454445444444dfgdfg454} on $(0,T)$.
Furthermore, the solution map $[(f_u,f_n,g,b,f_h,u_0,h_0) \mapsto (u,h)]$ is continuous with respect to these spaces. In particular, the operator norm of  $[(f_u,f_n,g,b,f_h,0,0) \mapsto (u,h)]$ is independent of $T > 0$.
\end{theorem}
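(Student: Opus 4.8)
The plan is to combine the reduction scheme from the proof of Theorem~\ref{34ztutzutzutzutzutzutzuf} with the semigroup construction of Section~6.6 of \cite{pruessbuch}, the two new features being that $\Omega$ is now a bounded container, so a localization step is needed, and that the height function is sought in the class $\mathbb E_h^S(T)$. I want to emphasize at the outset why the latter is the natural setting: since $f_h$ is only required to satisfy $f_h\in L_p(0,T;W^{2-2/q}_q(\Sigma))$, the full right-hand side $f:=(f_u,f_h)$ belongs to $L_p(0,T;X_0)$ with $X_0=L_q(\Omega)\times W^{2-2/q}_q(\Sigma)$. Hence $f_h$ can be kept as the second component of the forcing of the abstract Stefan equation and need not be removed by an auxiliary transmission problem, which is a genuine simplification compared with the flat model problem.

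First I would trivialize the remaining data. Exploiting once more that $q<2$, so that $f_n$ has no spatial trace on $\partial\Sigma$ and generates no compatibility condition at the contact line, I would solve a heat equation with Neumann datum $f_n$ and subtract its solution; using the continuous right inverse $E(b)$ of the Neumann trace provided by Lemma~\ref{34ggg65456546} I would remove $b$; and solving a parabolic Dirichlet problem I would remove $g$ via the relation $u|_\Sigma-\Delta h=g$. To reduce $h_0$ to zero and trivialize the last compatibility condition I would subtract the two-semigroup extension $\bar h(t)=(2e^{-At}-e^{-2At})h_0+(e^{-Bt}-e^{-2Bt})B^{-1}h_1$ of \eqref{345345jjjjj345345}; the computation carried out there already shows $\bar h\in\mathbb E_h(T)\into\mathbb E_h^S(T)$, and the newly generated forcing contribution $-\partial_t\bar h-\omega\bar h$ lands in $L_p(0,T;W^{2-2/q}_q(\Sigma))$, hence is again absorbed by $f$. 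After these steps the problem is reduced to the homogeneous-boundary, zero-initial-data system whose only inhomogeneity is $f=(f_u,f_h)\in L_p(0,T;X_0)$.

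The core is to write this reduced problem as the abstract evolution equation $\tfrac{d}{dt}z+\omega z+A_S z=f$, $z(0)=0$, for the Stefan operator $A_S$ on $X_0$, now defined over the bounded container with domain encoding the $90^\circ$ contact conditions $(n_{\partial\Omega}\,|\,\nabla u)=0$ and $(n_{\partial\Sigma}\,|\,\nabla h)=0$, and to establish maximal $L_q$-regularity of $\omega+A_S$. I would do this by localization: choosing a partition of unity subordinate to an atlas, then freezing coefficients and flattening, the problem reduces in the interior of $\Sigma$ to the closed-interface Stefan model problem of Section~6.6 of \cite{pruessbuch}, near the contact line to the flat half-space problem with interface of Theorem~\ref{34ztutzutzutzutzutzutzuf}, and near $\partial\Omega$ away from $\Sigma$ to the quarter- and half-space heat problems of Theorem~\ref{34534534534545ff}. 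The bent-space perturbation and the gluing of the local solution operators follow the localization procedure indicated above (cf.\ Section~4.4 of \cite{mulsekpaper12}); taking the chart radius small keeps the lower-order commutators small enough to be absorbed into the resolvent. Maximal $L_q$-regularity of $\omega+A_S$ then follows, and the principle of Bourgain \cite{bourgain1984} upgrades it to maximal $L_p$-regularity of the abstract equation, yielding a unique $z=(u,h)\in L_p(0,T;D(A_S))\cap H^1_p(0,T;X_0)$.

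Finally I would read off the regularity of $h$. Directly from the solution class, $h\in L_p(0,T;W^{4-1/q}_q(\Sigma))\cap H^1_p(0,T;W^{2-2/q}_q(\Sigma))$, while $\Delta h=u|_\Sigma-g$ together with the trace regularity of $u\in\mathbb E_u^S(T)$ and elliptic theory gives $h\in F^{1-1/2q}_{pq}(0,T;H^2_q(\Sigma))$; hence $h\in\mathbb E_h^S(T)$, and $T$-independence of the norm on vanishing traces is inherited from the model problems. The step I expect to be the main obstacle is the maximal-regularity analysis of $A_S$ near the contact line: one must verify that the localization and bent-space perturbation do not destroy the cancellation that, for $q<2$, removes the compatibility requirement for $(f_n,g,b)$ on $\partial\Sigma$, and that the local solution operators can be patched with bounds uniform in $T$. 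The remaining steps are a routine transcription of the flat-interface argument.
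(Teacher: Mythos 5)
Your proposal is correct and follows essentially the route the paper intends: the paper's own proof is only the remark that one adapts Section 6.6 of \cite{pruessbuch}, i.e.\ the reduction of the data as in Theorem \ref{34ztutzutzutzutzutzutzuf}, the abstract formulation with $f=(f_u,f_h)\in L_p(0,T;X_0)$ absorbing $f_h$ into the base space $W^{2-2/q}_q(\Sigma)$, and the localization to the flat-interface and quarter-space model problems already established. Your observation that $f_h$ no longer needs to be removed by an auxiliary transmission problem, and the recovery of $h\in\mathbb E_h^S(T)$ from $\Delta h=u|_\Sigma$ plus $\partial_t h\in L_p(0,T;W^{2-2/q}_q(\Sigma))$, match the paper's setup exactly.
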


\section{Convergence to equilibria}
In this section we investigate the long-time behaviour of solutions starting close to certain equilibria.

We will characterize the set of equilibria, study the spectrum of the linearization of the transformed Stefan problem \eqref{345345345435345345} around the trivial equilibrium $(u_*,h_*) = (0,0)$, and show that solutions starting sufficiently close to certain equilibria converge to equilibrium at an exponential rate in the interpolation norm.

Let us discuss equilibria. By testing the Stefan problem $\eqref{34905384953745034875}_1$ with its solution $u$ and recalling the transport identity $ \frac{d}{dt} \int_{\Gamma(t)} \sigma = - \int_{\Gamma(t)} \sigma H V$, cf. \cite{garcke232}, we derive 
\begin{equation}
\frac{d}{dt} \left[ \int_{\Gamma(t)} \sigma + \int_\Omega \frac{|u|^2}{2} \right] = - \int_\Omega |\nabla u|^2.
\end{equation}
A stationary solution $(u,\Gamma)$ satisfies that $u$ is constant. Hence also $H_\Gamma$ is constant and the set of equilibrium solutions is
\begin{equation}
\mathcal E = \{ (u, \Gamma) : H_\Gamma = const., \; u = \sigma H_\Gamma \}.
\end{equation}
Let us now additionally assume that $\Gamma$ is the graph of a function $h$ over $\Sigma$. In this case, we may even deduce that $H_\Gamma = 0$. Indeed, by shifting we may assume that $h$ is mean value free without changing $H_\Gamma$. A well-known formula for the mean curvature, an integration by parts, and the ninety-degree angle boundary condition give
\begin{equation}
0 = \int_\Sigma H_\Gamma h = - \int_\Sigma \frac{ | \nabla h|^2}{ \sqrt{1+ | \nabla h |^2}} .
\end{equation}
In particular, $h$ is constant and therefore $H_\Gamma = 0$. Consequently, also $u = 0$.

We will now study the problem for the height function \eqref{345345345435345345} in an $L_p$-setting. The equilibria in the graph case are now
\begin{equation}
\mathcal E_\Sigma = \{ (u, h) : u = 0, \; h = const. \}.
\end{equation}
The linearization of the (transformed) Stefan problem with Gibbs-Thomson correction around the trivial equilibrium reads as
\begin{equation} \label{43dfgdfghjghjghjghjfffdfg4}
\begin{cases} \begin{alignedat}{2}
\partial_t u  - \Delta u &= f_u, \qquad\qquad\qquad &\text{in } \Omega \backslash \Sigma, \\
(n_{\partial\Omega} | \nabla u) &= f_n, &\text{in } \partial\Omega, \\
\ljump u\rjump =0, \quad u|_\Sigma - \Delta h &= g, &\text{on } \Sigma, \\
(n_{\partial\Sigma} | \nabla h) &= 0, &\text{in } \partial\Sigma, \\
\partial_t h + \ljump \partial_3 u \rjump &= f_h, &\text{on } \Sigma, \\
u(0) &= u_0, &\text{in } \Omega \backslash \Sigma, \\
h(0) &= h_0, &\text{on } \Sigma. \\
\end{alignedat} \end{cases}
\end{equation}
Assuming $f_n = g = 0$, we may rewrite \eqref{43dfgdfghjghjghjghjfffdfg4} as an abstract evolution equation as follows. We note that we have to choose the function spaces corresponding to the semigroup approach, cf. Section \ref{jklughjkbgzhg345}. Define Banach spaces
\begin{equation}
X_0 := L_q (\Omega) \times W^{2-2/q}_q(\Sigma), \quad X_1 := H^2_q(\Omega \backslash \Sigma) \times W^{4-1/q}_q(\Sigma),
\end{equation}
and the linear operator $A$ in $X_0$ by $A : D(A) \subset X_1 \pfeil X_0$,
\begin{equation}
A(u,h) := (-\Delta u, \ljump \partial_3 u \rjump ),
\end{equation}
with domain
\begin{align}
D(A) := \{ (u,h) \in X_1 : &\ljump u \rjump =0, \; u|_\Sigma = \Delta h, \; \ljump \partial_3 u \rjump \in W^{2-2/q}_q(\Sigma), \\ &(\nabla h, n_{\partial\Sigma}) = 0, \; (n_{\partial\Omega} | \nabla u) = 0 \}.
\end{align}
For $f_u \in L_p(0,T; L_q (\Omega))$, $f_h \in L_p(0,T; W^{2-2/q}_q(\Sigma))$, and $f_n = g = 0$, we may rewrite \eqref{43dfgdfghjghjghjghjfffdfg4} as an abstract evolution equation
\begin{equation}
\dot z (t) + Az(t) = f(t), \; t > 0, \quad z(0) = z_0,
\end{equation}
where $f := (f_u, f_h) \in L_p(0,T; X_0)$ and $z_0 := (u_0, h_0)$. The operator $A$ has the following properties.
\begin{lemma} \label{3445kk34k5jhl345}
Let $n = 2,3$, $p \in (6,\infty)$, $q \in (19/10, 2) \cap (2p/(p+1), 2)$, $A$ and $X_0$ as above.
\begin{enumerate}
\item The linear operator $-A$ generates an analytic $C_0$-semigroup $e^{-At}$ in $X_0$, which has maximal $L_p$-regularity.
\item The spectrum $\sigma(-A)$ consists of at most countably many eigenvalues with finite algebraic multiplicity.
 \item $\sigma(-A) \cap i\R \subset \{ 0 \}$.
\item $\sigma(-A) \backslash \{ 0 \} \subset \mathbb C_- := \{ z \in \mathbb C : \operatorname{Re} z < 0 \}$. 
\item $\lambda = 0$ is semi-simple with multiplicity one, $X_0 = N(A) \oplus R(A)$. 
\item $N(A)$ is isomorphic to the tangent space $T_{z_*} \mathcal E_\Sigma$ at the trivial equilibrium $z_* = (0,0)$. Furthermore, $N(A)$ is one dimensional and spanned by $(0,1)$.
\item The restriction of the semigroup $e^{-At}$ to ${R(A)}$ is exponentially stable.
\end{enumerate}
\end{lemma}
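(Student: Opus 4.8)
The plan is to establish the seven items in order, deducing (1) and (2) from the maximal regularity theorem together with the geometry of the domain, and extracting (3)--(7) from a single energy identity for the eigenvalue problem. For (1), generation and maximal $L_p$-regularity are read off from Theorem \ref{34ztutsdfddgzutzutzutzutzutzuf}: it furnishes maximal $L_p$-regularity of $\omega + A$ on finite intervals for $\omega > 0$, which forces $-(\omega+A)$ to generate an analytic $C_0$-semigroup; since generation and maximal regularity are invariant under the bounded shift by $\omega$, the same holds for $-A$ on $X_0$. For (2), I would show the resolvent is compact. The embedding $X_1 \into X_0$ is compact by Rellich--Kondrachov (here $H^2_q(\Omega\backslash\Sigma)\into L_q(\Omega)$ and $W^{4-1/q}_q(\Sigma)\into W^{2-2/q}_q(\Sigma)$ are both compact because $\Omega$ is bounded and $\Sigma$ is compact), and the a priori estimate in Theorem \ref{34ztutsdfddgzutzutzutzutzutzuf} shows the graph norm of $A$ dominates the $X_1$-norm on $D(A)$. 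Hence any resolvent $(\lambda+A)^{-1}$ (which exists by (1)) maps $X_0$ into $X_1$, thus compactly into $X_0$, and Riesz--Schauder theory gives a discrete spectrum of eigenvalues of finite algebraic multiplicity, accumulating only at infinity.

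The heart of the argument is a test computation for an eigenpair $A(u,h)=\lambda(u,h)$, that is $-\Delta u = \lambda u$ in $\Omega\backslash\Sigma$ and $\ljump\partial_3 u\rjump = \lambda h$ on $\Sigma$, together with $\ljump u\rjump = 0$, $u|_\Sigma=\Delta h$, $(n_{\partial\Omega}|\nabla u)=0$ and the ninety-degree condition $(n_{\partial\Sigma}|\nabla h)=0$. Testing against $\bar u$ and integrating by parts over $\Omega^\pm$, the $\partial\Omega$-boundary term drops by the Neumann condition, the two interface contributions combine to $\int_\Sigma \ljump\partial_3 u\rjump\,\overline{u|_\Sigma}$ because $\ljump u\rjump = 0$, and substituting $\ljump\partial_3 u\rjump=\lambda h$, $u|_\Sigma=\Delta h$ and integrating by parts on $\Sigma$ (the boundary term vanishing by $(n_{\partial\Sigma}|\nabla h)=0$) gives
\[
\lambda\big(\|u\|_{L_2(\Omega)}^2 + \|\nabla h\|_{L_2(\Sigma)}^2\big) = \|\nabla u\|_{L_2(\Omega)}^2.
\]
Thus each eigenvalue of $A$ is real and nonnegative, so $\sigma(-A)\subset(-\infty,0]$ and (3) follows. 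For $\lambda=0$ one has $\nabla u\equiv 0$, hence $u$ is constant; the Neumann solvability condition $\int_\Sigma\Delta h = 0$ forces this constant to vanish, so $u=0$ and $h$ is constant. Therefore $N(A)=\mathrm{span}\{(0,1)\}=T_{z_*}\mathcal E_\Sigma$, which is (6), and every nonzero eigenvalue of $A$ is strictly positive, giving $\sigma(-A)\setminus\{0\}\subset\mathbb C_-$, which is (4).

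For (5) I would verify semi-simplicity by showing $N(A^2)=N(A)$. If $(u,h)\in N(A^2)$ then $A(u,h)=c\,(0,1)$ for some constant $c$, i.e. $-\Delta u=0$ and $\ljump\partial_3 u\rjump = c$; testing against $\bar u$ as above yields $\|\nabla u\|_{L_2(\Omega)}^2 + c\int_\Sigma\overline{u|_\Sigma}=0$, while $\int_\Sigma u|_\Sigma=\int_\Sigma\Delta h = 0$ by the Neumann condition. Hence $\nabla u\equiv 0$, $u$ is constant, and $c=\ljump\partial_3 u\rjump = 0$, so $A(u,h)=0$. Thus $0$ is a first-order pole of the resolvent, and the associated Riesz projection yields the topological splitting $X_0=N(A)\oplus R(A)$ with $R(A)$ closed, establishing (5).

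Finally, for (7) the part of $-A$ in $R(A)$ has spectrum $\sigma(-A)\setminus\{0\}$. Since $-A$ generates an analytic semigroup, its spectrum lies in a left-opening sector, so for any $\varepsilon>0$ the set $\{\mu\in\sigma(-A):\operatorname{Re}\mu\geq-\varepsilon\}$ is bounded, hence finite; excluding $0$, these finitely many eigenvalues all lie in $\mathbb C_-$, so the spectral bound $s(-A|_{R(A)})$ is strictly negative, and by the coincidence of spectral and growth bounds for analytic semigroups the restricted semigroup decays exponentially. I expect the main obstacles to be twofold: rigorously justifying the $L_2$ energy identity in the $L_q$ setting with $q<2$ and a free interface meeting the boundary at the contact line, which requires enough elliptic regularity of the eigenfunctions up to the edge; and, in (7), upgrading the pointwise sign information $\operatorname{Re}\mu<0$ to a uniformly negative spectral bound --- the genuine gap between stability and exponential stability, bridged only by the sectoriality coming from analyticity.
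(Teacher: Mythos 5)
Your proposal is correct and follows the same overall route as the paper: maximal regularity (Theorem \ref{34ztutsdfddgzutzutzutzutzutzuf}) for generation, compactness of $X_1\into X_0$ for discreteness of the spectrum, an $L_2$-energy identity for the eigenvalue problem to locate the spectrum and identify $N(A)$, the identity $N(A)=N(A^2)$ for semi-simplicity, and the spectral gap for exponential stability of the restricted semigroup. The one substantive difference is in the energy identity: carrying out the integration by parts with both interface substitutions $\ljump \partial_3 u\rjump=\lambda h$ and $\overline{u|_\Sigma}=\overline{\Delta h}$ you arrive at $\lambda\bigl(\|u\|_{L_2(\Omega)}^2+\|\nabla h\|_{L_2(\Sigma)}^2\bigr)=\|\nabla u\|_{L_2(\Omega)}^2$ with $\lambda$ unconjugated in both terms, which immediately yields that the whole point spectrum is real and hence gives (3) and (4) in one stroke; the paper records the identity with a $\bar\lambda$ in front of $|\nabla h|^2_{L_2(\Sigma)}$, only extracts $\operatorname{Re}\lambda\le 0$ from real parts, and then runs a separate argument to rule out nonzero purely imaginary eigenvalues. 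Your version is the cleaner one (and consistent with the gradient-flow structure of the problem), but you should make explicit the degenerate case $\|u\|^2+\|\nabla h\|^2=0$ before dividing (there $u=0$, $h$ is constant, and the fifth eigenvalue equation forces $\lambda=0$ anyway). Likewise, in the $N(A^2)$ step you reach the conclusion $c=0$ via the energy identity combined with $\int_\Sigma u|_\Sigma=\int_\Sigma\Delta h=0$, whereas the paper gets $\int_\Sigma\ljump\partial_3 u\rjump\,dx'=0$ directly from the divergence theorem on $\Omega^\pm$; both are valid. Your two flagged obstacles (justifying the $L_2$ pairing for $q<2$, which works since $H^2_q(\Omega\backslash\Sigma)\into L_2$ and $W^{3-1/q}_q(\Sigma)\into L_2$ for $q$ near $2$ and $n\le 3$, and upgrading $\operatorname{Re}\lambda<0$ to a uniform spectral gap via discreteness plus sectoriality) are exactly the points the paper passes over silently, and your treatment of them is adequate.
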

\begin{proof}
The first statement follows from the semigroup approach of Section \ref{jklughjkbgzhg345}. Since the domain $D(A)$ compactly embeds into $X_0$, the resolvent of $A$ is compact and hence the second statement follows. Let $\lambda \in \sigma(-A)$ with eigenfunctions $(u,h)$. The corresponding eigenvalue problem then reads as
\begin{equation} \label{4hhjhjhtzjtzjtzjfdfg4}
\begin{cases} \begin{alignedat}{2}
\lambda u  - \Delta u &= 0, \qquad\qquad\qquad &\text{in } \Omega \backslash \Sigma, \\
(n_{\partial\Omega} | \nabla u) &= 0, &\text{in } \partial\Omega, \\
\ljump u\rjump =0, \quad u|_\Sigma - \Delta h &= 0, &\text{on } \Sigma, \\
(n_{\partial\Sigma} | \nabla h) &= 0, &\text{in } \partial\Sigma, \\
\lambda h + \ljump \partial_3 u \rjump &= 0, &\text{on } \Sigma.
\end{alignedat} \end{cases}
\end{equation}
By testing $\eqref{4hhjhjhtzjtzjtzjfdfg4}_1$ with $u$ in $L_2(\Omega)$ and invoking the other equations an integration by parts entails
\begin{equation} \label{3456kkkkkk4564kkk456}
\lambda |u|_{L_2(\Omega)}^2 + |Du|_{L_2(\Omega)}^2 + \bar \lambda | \nabla h |_{L_2(\Sigma)}^2 = 0.
\end{equation}
This shows $\operatorname{Re} \lambda \leq 0$. If $\lambda = 0$, we obtain that $Du = 0$ and $u$ is constant in $\Omega$. Hence $\Delta h$ is constant on $\Sigma$. Integrating $\Delta h$ over $\Sigma$ and invoking $\eqref{4hhjhjhtzjtzjtzjfdfg4}_4$ gives $\Delta h = 0$. Hence $h = const.$ This shows that $\lambda = 0$ is an eigenvalue and that the kernel $N(A)$ is spanned by $(0,1)$.

Let us show that $\lambda = 0$ is the only eigenvalue with real part zero. Taking real parts in \eqref{3456kkkkkk4564kkk456} and using that $\operatorname{Re} \lambda = 0$ gives that
the corresponding eigenfunction $u$ is constant. Equation $\eqref{4hhjhjhtzjtzjtzjfdfg4}_5$ entails $\lambda h = 0$, and since $h$ may not be trivial, $\lambda = 0$.
We now prove $N(A) = N(A^2)$. Let $(u,h) \in N(A^2)$ and $(u_1, h_1) := A(u,h)$. Since now $(u_1,h_1)$ is in the kernel of $A$, $u_1 = 0$ and $h_1 = const.$ The problem for $(u,h)$ reads as
\begin{equation} \label{4hfgfdgfdgfdgfghfghfghfghdfgdfgdfgfg4}
\begin{cases} \begin{alignedat}{2}
   \Delta u &= 0, \qquad\qquad\qquad &\text{in } \Omega \backslash \Sigma, \\
(n_{\partial\Omega} | \nabla u) &= 0, &\text{in } \partial\Omega, \\
\ljump u\rjump =0, \quad u|_\Sigma - \Delta h &= 0, &\text{on } \Sigma, \\
(n_{\partial\Sigma} | \nabla h) &= 0, &\text{in } \partial\Sigma, \\
  \ljump \partial_3 u \rjump &= h_1, &\text{on } \Sigma.
\end{alignedat} \end{cases}
\end{equation}
An integration by parts on $\Omega^\pm$ using $\eqref{4hfgfdgfdgfdgfghfghfghfghdfgdfgdfgfg4}_{1,2}$ entails that
\begin{equation}
\int_\Sigma \ljump \partial _ 3 u \rjump dx'= \int_{\Omega^+} \div \nabla u dx + \int_{\Omega^-} \div \nabla u dx = 0.
\end{equation}
Since $h_1$ is constant, $h_1 = 0$ by \eqref{4hfgfdgfdgfdgfghfghfghfghdfgdfgdfgfg4}.
Consequently, $(u,h) \in N(A)$. This shows $N(A) = N(A^2)$, whence by spectral theory the range of $A$ is closed in $X_0$ and there is a spectral decomposition $X_0 = R(A) \oplus N(A)$, cf. \cite{engelnagel}, \cite{lunardioptimal}. Hence $\lambda = 0$ is semi-simple. In particular, the restricted semigroup $e^{-At}|_{R(A)}$ is exponentially stable since we have a spectral gap.
Note that we have even shown that in fact $N(A) = \mathcal E_\Sigma = T_{z_*} \mathcal E_\Sigma = span(0,1)$.
\end{proof}

\subsection*{Parametrization of nonlinear phase manifold.}
Let us parametrize the nonlinear phase manifold 
\begin{equation}
\begin{alignedat}{2}
\mathsf{PM} := \{ (u,h) &\in W^{2-2/q}_q(\Omega\backslash \Sigma) \times B_{qp}^{4-1/q-2/p}(\Sigma) : \ljump u \rjump = 0 \text{ on } \Sigma, \\
&u|_\Sigma - \sigma \Delta h  = F_\kappa (h)\text{ on } \Sigma, \; (n_{\partial\Omega} | \nabla u) = 0\text{ on } \partial\Omega, \\ & (n_{\partial\Sigma} | \nabla h) = 0 \text{ on } \partial\Sigma \}.
\end{alignedat}
\end{equation}
as a subset of $X_\gamma := W^{2-2/q}_q(\Omega\backslash \Sigma) \times B_{qp}^{4-1/q-2/p}(\Sigma)$ over 
\begin{equation}
\begin{alignedat}{2}
\mathsf{PM}_0 := \{ (u,h) &\in W^{2-2/q}_q(\Omega\backslash \Sigma) \times B_{qp}^{4-1/q-2/p}(\Sigma) : \ljump u \rjump = 0 \text{ on } \Sigma, \\
&u|_\Sigma - \sigma \Delta h  = 0\text{ on } \Sigma, \; (n_{\partial\Omega} | \nabla u) = 0\text{ on } \partial\Omega, \\ & (n_{\partial\Sigma} | \nabla h) = 0 \text{ on } \partial\Sigma \},
\end{alignedat}
\end{equation}
at least locally around the trivial equilibrium $(u_*, h_*) = (0,0)$. We note that $F_\kappa$ is smooth, $F_\kappa (0) = 0$, and $DF_\kappa(0) = 0$ since $F_\kappa$ is quadratic in $h$. Let us consider the stationary auxiliary problem
\begin{equation} \label{434fgdfdfgdfgdfggff4}
\begin{cases} \begin{alignedat}{2}
 \omega u - \Delta u &= 0, \qquad\qquad &\text{in } \Omega \backslash \Sigma, \\
(n_{\partial\Omega} | \nabla u) &= 0, &\text{in } \partial\Omega, \\
\ljump u\rjump =0, \quad u|_\Sigma &= g, &\text{on } \Sigma.
\end{alignedat} \end{cases}
\end{equation}
By a localization argument we may solve \eqref{434fgdfdfgdfgdfggff4} as follows, cf. also Appendix A in \cite{mulsekpaper12}.
\begin{lemma}
Let $n = 3$, $3/2 < q < 2$. For sufficiently large $\omega > 0$ we may solve \eqref{434fgdfdfgdfgdfggff4} for given right hand side $g \in W^{2-3/q}_q(\Sigma)$ uniquely by some $u \in W^{2-2/q}_q(\Omega \backslash \Sigma)$. Moreover, $[W^{2-3/q}_q(\Sigma) \ni g \mapsto u \in W^{2-2/q}_q(\Omega \backslash \Sigma)]$ is continuous.
\end{lemma}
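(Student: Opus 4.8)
The plan is to solve \eqref{434fgdfdfgdfgdfggff4} by a localization procedure in the spirit of Appendix A in \cite{mulsekpaper12}. First I would note that since $\ljump u \rjump = 0$ together with the Dirichlet condition $u|_\Sigma = g$ prescribes the boundary value of $u$ on \emph{both} faces of $\Sigma$, and since no coupling of the normal derivatives across $\Sigma$ is imposed, the problem decouples into two independent one-sided problems on $\Omega^+$ and $\Omega^-$. Each of these is the resolvent equation $\omega u - \Delta u = 0$ equipped with a Dirichlet condition $u = g$ on $\Sigma$ and the homogeneous Neumann condition $(n_{\partial\Omega}|\nabla u) = 0$ on the adjacent portion of $\partial\Omega$. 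It therefore suffices to treat the one-sided mixed Dirichlet--Neumann problem for $\omega - \Delta$ and to recombine the two solutions. Note also that the trace regularity is consistent: the trace of $W^{2-2/q}_q(\Omega\setminus\Sigma)$ on $\Sigma$ is exactly $W^{2-3/q}_q(\Sigma)$.

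Next I would fix a finite atlas and a subordinate partition of unity, producing four types of local charts: interior patches, where only $\omega u - \Delta u = 0$ is seen and classical elliptic theory applies; patches meeting $\partial\Omega$ away from $\Sigma$, giving a flat Neumann half-space problem; patches meeting $\Sigma$ away from $\partial\Omega$, giving a flat Dirichlet half-space problem; and patches centred at the contact line $\partial\Sigma$, where the $90^\circ$ contact angle produces a quarter-space carrying Dirichlet data on one face and a homogeneous Neumann condition on the perpendicular face. For the first three chart types the associated half-space resolvent problems are solved with the scale estimate that the Poisson-type solution operator $[g \mapsto u]$ is bounded from $W^{2-3/q}_q$ into $W^{2-2/q}_q$, by the standard Fourier-analytic resolvent / maximal regularity results, cf. \cite{pruessbuch}.

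The decisive chart is the one at the contact line, and here I would exploit the right angle. Performing an even reflection across the Neumann face $\partial\Omega$ automatically enforces the homogeneous Neumann condition and preserves the equation $\omega u - \Delta u = 0$, thereby converting the quarter-space mixed problem into a half-space Dirichlet problem whose boundary datum is the even extension of $g$ across $\partial\Sigma$. The point where $q < 2$ is essential is that the reflected datum remains in the correct class: since $2 - 3/q < 1/q$ is equivalent to $q < 2$, the Dirichlet datum $g$ possesses no trace whatsoever on the contact line $\partial\Sigma$, so the even reflection is bounded on $W^{2-3/q}_q$ and \emph{no} compatibility condition arises at the edge. One then solves the resulting half-space Dirichlet problem in $W^{2-2/q}_q$ and restricts back to the quarter-space.

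Finally I would patch the local solutions together by the partition of unity. The commutators of the cut-off functions with $\Delta$ are of lower order, so for $\omega$ sufficiently large they are absorbed as a small perturbation relative to the resolvent bounds, and a Neumann series closes the argument, yielding a bounded two-sided inverse of the problem operator. This provides simultaneously existence, uniqueness, and the asserted continuity of the linear solution map $[W^{2-3/q}_q(\Sigma) \ni g \mapsto u \in W^{2-2/q}_q(\Omega\setminus\Sigma)]$ for large $\omega$. I expect the main obstacle to be precisely the contact-line chart, namely securing the full $W^{2-2/q}_q$ regularity up to the $90^\circ$ edge without incurring additional compatibility conditions; this is exactly what the even-reflection trick, combined with the absence of edge traces for $q < 2$, resolves.
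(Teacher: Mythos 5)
Your proposal is correct and follows essentially the same route as the paper, which offers no details beyond invoking ``a localization argument \ldots cf.\ Appendix A in \cite{mulsekpaper12}'': interior/half-space charts plus an even reflection across the Neumann face at the $90^\circ$ contact line, with $q<2$ guaranteeing that the Dirichlet datum has no trace on $\partial\Sigma$ and hence that no edge compatibility condition arises. Your additional observation that $\ljump u \rjump = 0$ together with $u|_\Sigma = g$ decouples the problem into two one-sided Dirichlet--Neumann problems is accurate and slightly streamlines the chart analysis.
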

We can now parametrize $\mathsf{PM}$ over $\mathsf{PM}_0$ as follows. By the nature of these manifolds, we can fix $h$ and only parametrize over $u$. Fix $\omega > 0$ large enough and denote by $L$ the linear operator on the left hand side of \eqref{434fgdfdfgdfgdfggff4}. Given a function $u$ in the linear phase manifold, we have that $ u = L^{-1} \sigma \Delta h$ for the fixed $h$. The function $ \hat u :=  u + L^{-1} F_\kappa (h) = L^{-1}[\sigma \Delta h + F_\kappa(h)]$ then lies in $\mathsf {PM}$ by construction. Note that the mapping $\psi : W^{2-3/q}_q(\Sigma) \pfeil W^{2-3/q}_q(\Sigma)$ given by $[ u \mapsto \hat u]$ is locally invertible around zero. Indeed, $D\psi(0) = I + L^{-1} DF_\kappa(0) = I$. Hence there is a small neighbourhood $U \subset W^{2-3/ q}_q(\Sigma)$ of zero, such that $[u \mapsto \psi(u)]$ is a parametrization of $\mathsf{PM}$ over $\mathsf{PM}_0$. Note that also $\psi \in C^\infty$ since $F_\kappa$ is smooth.
\subsection*{Convergence to equilibrium solutions.}
We can now formulate and show the main result on convergence to equilibrium solutions of this chapter.
\begin{theorem} \label{3948560384765083746534}
The trivial equilibrium $(0,\Sigma)$ is stable in the following sense. For each $\epsilon > 0$ there is some $\delta = \delta (\epsilon) >0$, such that for all initial values $(u_0,h_0) \in \mathsf{PM}$ subject to the smallness condition
\begin{equation}
|u_0|_{W^{2-2/q}_q(\Omega \backslash \Sigma)} + |h_0|_{B^{4-1/q-2/p}_{qp}(\Sigma)} \leq \delta(\epsilon)
\end{equation}
there exists a unique global in time solution $(u,h)$ of the transformed Stefan problem \eqref{345345345435345345} and it satisfies
\begin{equation}
|u(t)|_{W^{2-2/q}_q(\Omega \backslash \Sigma)} + |h(t)|_{B^{4-1/q-2/p}_{qp}(\Sigma)} \leq \epsilon, \quad t \in \R_+.
\end{equation}
Moreover there exists some constant $h_\infty$ such that
\begin{equation}
\left[ |u(t)|_{W^{2-2/q}_q(\Omega \backslash \Sigma)} + |h(t)-h_\infty|_{B^{4-1/q-2/p}_{qp}(\Sigma)} \right] \pfeil 0, \quad t \pfeil \infty.
\end{equation}
The convergence is at an exponential rate.
\end{theorem}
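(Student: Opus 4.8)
The plan is to deduce the result from the generalized principle of linearized stability for quasilinear parabolic evolution equations in the \emph{normally stable} case, cf. \cite{pruessbuch}. To this end I first recast the transformed Stefan problem \eqref{345345345435345345} as an abstract quasilinear evolution equation $\dot z + \mathcal A(z) z = \mathcal F(z)$, $z(0) = z_0$, on the base space $X_0 = L_q(\Omega) \times W^{2-2/q}_q(\Sigma)$ with time-trace space $X_\gamma = W^{2-2/q}_q(\Omega \backslash \Sigma) \times B^{4-1/q-2/p}_{qp}(\Sigma)$, where $z = (u,h)$ and the right-hand sides $F_u, F_\kappa, F_\Sigma$ are absorbed into $\mathcal F$. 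The linearization at the trivial equilibrium $z_* = (0,0)$ is precisely the operator $A$ of Lemma \ref{3445kk34k5jhl345}, since all the nonlinear terms vanish to second order at $z_*$: the curvature remainder satisfies $F_\kappa(0) = 0$ and $DF_\kappa(0) = 0$ (it is quadratic in $h$), while the Hanzawa-induced contributions $(\Delta_h - \Delta)u$ and $Du \cdot \partial_t \Theta_h^{-1}$ have coefficients depending on $h$ and $\nabla h$ that degenerate at $h = 0$.

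The structural hypotheses are then supplied essentially verbatim by Lemma \ref{3445kk34k5jhl345}: $-A$ generates an analytic $C_0$-semigroup with maximal $L_p$-regularity, $\sigma(-A)$ is discrete, $\sigma(-A) \cap i\R = \{0\}$ with $\sigma(-A) \setminus \{0\} \subset \mathbb C_-$, the eigenvalue $0$ is semi-simple with $X_0 = N(A) \oplus R(A)$, and $e^{-At}|_{R(A)}$ is exponentially stable owing to the spectral gap. The decisive point for normal stability is that $N(A)$ is one-dimensional, spanned by $(0,1)$, and coincides with the tangent space $T_{z_*}\mathcal E_\Sigma$ of the equilibrium manifold $\mathcal E_\Sigma = \{(0,h) : h = \text{const}\}$. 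Combined with the $C^\infty$-parametrization $\psi$ of the nonlinear phase manifold $\mathsf{PM}$ over $\mathsf{PM}_0$ constructed above, this shows that the equilibria near $z_*$ form a one-dimensional $C^1$-manifold whose tangent space at $z_*$ equals the semi-simple kernel of the linearization, with no further spectrum on the imaginary axis, which is exactly the normal stability condition.

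Granting this, the abstract theorem applies and yields all three assertions at once. Writing $P$ for the spectral projection onto $N(A)$ along $R(A)$ and decomposing $z = Pz + (I-P)z$, the range part $(I-P)z$ decays exponentially by the spectral gap, while the kernel part $Pz$ converges to some $(0,h_\infty) \in \mathcal E_\Sigma$; combining this with the nonlinear well-posedness result proved above bootstraps local to global existence for data $\delta$-close to $z_*$ and gives the stated Lyapunov stability in the $X_\gamma$-norm together with exponential convergence to $(0,h_\infty)$ in the interpolation norm as $t \pfeil \infty$.

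I expect the principal obstacle to lie in the first step, namely verifying that $\mathcal F$ is smooth (with locally Lipschitz derivative) as a map on a neighbourhood of zero in $X_\gamma$ into $X_0$ while simultaneously reconciling the two solution classes $\mathbb E_h$ and $\mathbb E_h^S$ used respectively for nonlinear well-posedness and for the semigroup framework. This is where the delicate regime $q < 2$ and the sharp Neumann-trace regularity of Lemma \ref{34ggg65456546} re-enter, since the contact-angle condition $(n_{\partial\Sigma} | \nabla h) = 0$ and the absence of a spatial trace of $\nabla u|_\Sigma$ on $\partial\Sigma$ must be compatible with the smoothness estimates for $\mathcal F$. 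Once these nonlinearity and space-compatibility issues are settled, the stability and convergence conclusions follow formally from the normally-stable abstract result.
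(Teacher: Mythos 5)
Your overall strategy --- linearized stability at a normally stable equilibrium, with Lemma \ref{3445kk34k5jhl345} supplying the spectral hypotheses and the parametrization of $\mathsf{PM}$ over $\mathsf{PM}_0$ supplying the manifold structure --- is exactly the one the paper follows. However, there is a genuine gap in your first step, and it is not merely the technical verification you defer to the end. You propose to recast \eqref{345345345435345345} as $\dot z + \mathcal A(z)z = \mathcal F(z)$ with $F_u, F_\kappa, F_\Sigma$ ``absorbed into $\mathcal F$'' and then invoke the abstract normally-stable theorem. This absorption is not available: $F_\kappa(h)$ sits in the \emph{boundary condition} $u|_\Sigma - \sigma\Delta h = F_\kappa(h)$, i.e.\ it perturbs the constraint defining the domain of the semigroup generator (whose domain $D(A)$ carries the linear condition $u|_\Sigma = \Delta h$), and cannot be moved into a forcing term valued in $X_0 = L_q(\Omega)\times W^{2-2/q}_q(\Sigma)$. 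Moreover $F_\Sigma(u,h)$ contains $\partial_t h$, so it is not a function of the state $z$ alone. This is precisely why the nonlinear solutions live on the nonlinear phase manifold $\mathsf{PM}$ rather than on $\mathsf{PM}_0 \supset D(A)\cap X_\gamma$, and why the abstract theorem cannot be applied as a black box.

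The paper's proof is the concrete workaround: it splits $(u,h) = (u_\infty,h_\infty) + (\tilde u,\tilde h) + (\bar u,\bar h)$, where $(\tilde u,\tilde h)(t)$ lies on the \emph{linear} phase manifold for every $t$ (so the Stefan semigroup and the spectral decomposition $X_0 = N(A)\oplus R(A)$ apply to it, with right-hand side $\omega\bar z$ of high regularity), while the correction $(\bar u,\bar h)$ carries all the nonlinearities --- including the boundary nonlinearity $F_\kappa$ and the $\partial_t h$-dependence in $F_\Sigma$ --- and is solved via the shifted maximal regularity operator $L_\omega$ (invertible for $\omega$ large by Theorem \ref{34ztutzutzutzutzutzutzuf}) and the implicit function theorem in exponentially time-weighted spaces, after resolving the compatibility conditions with an extension operator. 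The normal form for $\tilde z$ decouples (unlike in the closed-interface case), $\mathsf x_\infty$ is produced by integrating $T(\bar z)$ over $\R_+$, and the exponential decay in the $X_\gamma$-norm comes from the time-weighted maximal regularity estimates for both pieces, not directly from the semigroup decay alone. You correctly sensed where the difficulty lies, but resolving it requires this explicit two-system decomposition rather than a citation of the abstract result; as written, your argument establishes the spectral picture but not the existence, global extension, or convergence claims.
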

\begin{proof}
The proof is a modification to the proofs in \cite{abelswilke} and \cite{wilkehabil}. We shall only give the necessary modifications to it. Pick some $(u_0,h_0) \in \mathsf{PM}$ for some $\delta > 0$ to be chosen later. With the help of the parametrization of the phase manifold we can decompose $(u_0,h_0) = (\tilde u_0, \tilde h_0) + (\phi(\tilde u_0, \tilde h_0), 0)$, where $\tilde h_0 = h_0$ and $ (\tilde u_0, \tilde h_0) $ belongs to $\mathsf{PM}_0$. We have seen that actually $\phi(\tilde u_0, \tilde h_0) = \psi(\tilde u_0)$.

We now want to decompose the solution $(u,h) = (u_\infty, h_\infty) + (\tilde u, \tilde h) + (\bar u, \bar h)$, where $(u_\infty, h_\infty)$ is an equilibrium solution and $(\tilde u, \tilde h)(t)$ belongs to the linear phase manifold for each $t$. Note that $u_\infty = 0$ and $h_\infty$ is constant. Let $\omega > 0$ and consider the two coupled systems
\begin{equation}\label{345dfggbdfgdfg345}  \begin{cases}
\begin{alignedat}{2}
\omega \bar u +\partial_t \bar u -  \Delta \bar u  &= F_u (\tilde u + \bar u, h_\infty + \tilde h + \bar h), & \text{in } \Omega \backslash \Sigma, \\
\ljump \bar u \rjump = 0, \quad  \bar u|_\Sigma - \sigma \Delta \bar h &= F_\kappa ( h_\infty + \tilde h + \bar h), &\text{on } \Sigma, \\
(n_{\partial\Omega} | \nabla \bar u)  &=0, \quad\quad\quad\quad& \text{on } \partial\Omega \backslash \partial\Sigma, \\ 
\omega \bar h + \partial_t \bar h +  \ljump \partial_3 \bar u \rjump &= F_\Sigma(\tilde u + \bar u, h_\infty + \tilde h + \bar h), \quad\quad\quad & \text{on } \Sigma, \\
( n_{\partial \Sigma}| \nabla \bar h   ) &= 0, &\text{on } \partial \Sigma, \\
\bar u(0) &= \phi(\tilde u_0), &\text{in } \Omega \backslash \Sigma, \\
\bar h(0) &= 0, &\text{on } \Sigma.
\end{alignedat} \end{cases}
\end{equation}
and
\begin{equation}\label{3gdfgdfgeghjghjghj345}  \begin{cases}
\begin{alignedat}{2}
\partial_t \tilde u -  \Delta \tilde u   &= \omega \bar u, & \text{in } \Omega \backslash \Sigma, \\
\ljump \tilde u \rjump = 0, \quad  \tilde u|_\Sigma - \sigma \Delta \tilde h &= 0, &\text{on } \Sigma, \\
(n_{\partial\Omega} | \nabla \tilde u)  &=0, \quad\quad\quad\quad& \text{on } \partial\Omega \backslash \partial\Sigma, \\ 
\partial_t \tilde h +  \ljump \partial_3 \tilde u \rjump &= \omega \bar h, \quad\quad\quad & \text{on } \Sigma, \\
( n_{\partial \Sigma}| \nabla \tilde h   ) &= 0, &\text{on } \partial \Sigma, \\
u(0) &= \tilde u_0, &\text{in } \Omega \backslash \Sigma, \\
h(0) &= h_0 - h_\infty, &\text{on } \Sigma.
\end{alignedat} \end{cases}
\end{equation}
We start with \eqref{3gdfgdfgeghjghjghj345}. Note that the right hand side $\omega \bar h$ has high regularity properties, so the semigroup theory of the Stefan problem is applicable. With the help of the operator $A$ we may rewrite \eqref{3gdfgdfgeghjghjghj345} as an abstract evolutionary problem
\begin{equation} \label{klhj43j5hjjh345345354}
\frac{d}{dt} \tilde z(t) + A \tilde z(t) = R( \bar z)(t), \; t >0, \quad \tilde z(0) = \tilde z_0 - z_\infty,
\end{equation}
where $\tilde z = (\tilde u, \tilde h)$, $\bar z = (\bar u, \bar h)$, $\tilde z_0 = (\tilde u_0, \tilde h_0)$, $z_\infty = (0,h_\infty)$ and $R(\bar z) = \omega \bar z(t)$.

Thanks to Lemma \ref{3445kk34k5jhl345} we have that $A$ has maximal $L_p$-regularity on finite time intervals in the base space $$X_0= L_q (\Omega) \times W^{2-2/q}_q(\Sigma)$$ and $X_0  = R(A) \oplus N(A)$. Denote by $P^c$ the spectral projection corresponding to $\sigma_c (A) = \{ 0 \}$ and let $P^s := I - P^c$. Then $R(P^c) = N(A)$ and $R(P^s) = R( A)$.
Note that since $N(A) = \mathcal E_\Sigma$ in a neighbourhood of zero, we can parametrize $N(A)$ over $\mathcal E_\Sigma$ locally around zero via the identity map.
Define $A_s := A P^s$ and introduce new variables $\mathsf x := P^c \tilde z$ and $\mathsf y :=P^s \tilde z$. We obtain the so called normal form of \eqref{klhj43j5hjjh345345354},
\begin{equation} \begin{alignedat}{2} \label{34kijl5645k6j45j64kj5l64kj56l4k56}
\dot {\mathsf x }&= T(\bar z), \quad\quad &&\mathsf x(0) = \mathsf x_0 - \mathsf x_\infty, \\
\dot {\mathsf y }+ A_s \mathsf y &= S(\bar z), &&\mathsf y (0) = \mathsf y_0,
\end{alignedat}
\end{equation}
where $T(\bar z) = P^c R(\bar z)$, $S(\bar z) = P^s R(\bar z)$ and $\mathsf x_0 := P^c \tilde z_0$, $\mathsf y_0 := P^s \tilde z_0$. We note that in contrast to \cite{abelswilke} the equations for $\mathsf x$ and $\mathsf y$ decouple. We define $\mathsf x_\infty := \mathsf x_0 + \int_0^\infty T(\bar z) (s) ds$ and directly solve the first equation by
\begin{equation}
\mathsf x (t ) := - \int_t^\infty T(\bar z ) (s) ds.
\end{equation}
Note in particular that in our case, $T(\bar z) = \omega P^c (\bar u, \bar h) = (0, \omega \Pi \bar h )$, where $\Pi \bar h$ denotes the mean value of $\bar h$, $\Pi \bar h =(\bar h|1)_{L_2(\Sigma)}$.
Conclusively, $T(\bar z)$ is constant in space for each $t$.
Define for $\delta > 0$ the time-weighted spaces
\begin{align}
\mathbb E_u (\delta ; \R_+) &:= \{ u \in L_p(\R_+ ; L_q(\Omega) ) : e^{\delta t}u \in \mathbb E_u (\R_+) \}, \\
\mathbb E_h (\delta ; \R_+) &:= \{ h \in L_p(\R_+ ; L_q(\Sigma) ) : e^{\delta t}h \in \mathbb E_h (\R_+) \}.
\end{align}
where $\mathbb E_u (\R_+) := H^1_p(\R_+; L_q(\Omega)) \cap L_p(\R_+; H^2_q(\Omega \backslash \Sigma))$ and
\begin{equation}
\mathbb E_h (\R_+) := F^{3/2-1/2q}_{pq}(\R_+; L_q (\Sigma))  \cap F^{1-1/2q}_{pq}(\R_+; H^2_q(\Sigma))  \cap L_p(\R_+; W^{4-1/q}_q(\Sigma)).
\end{equation}
For given $(\bar u, \bar h) \in \mathbb E_u (\delta ; \R_+) \times \mathbb E_h (\delta ; \R_+)$ we see that $T(\bar z) = (0,T_2(\bar z)) $, where $T_2(\bar z) \in \mathbb E_h (\delta ; \R_+)$. Now,
\begin{equation}
e^{\delta t} \mathsf x (t) = - \int_t^\infty e^{\delta (t-s)} e^{\delta s} T(\bar z)(s) ds,
\end{equation}
whence Young's inequality gives $e^{\delta t} \mathsf x \in L_p(\R_+  ; C^4(\Sigma) )$ since also $e^{\delta t} \mathsf x$ is constant in space. Now,
 \begin{equation}
 \frac{d}{dt}(e^{\delta t} \mathsf x) = \delta e^{\delta t} \mathsf x + e^{\delta t} \dot{ \mathsf x}, \quad  \frac{d^2}{dt^2} (e^{\delta t} \mathsf x) = \delta^2 e^{\delta t} \mathsf x + 2\delta e^{\delta t} \dot{\mathsf x} + \delta e^{\delta t} { \ddot { \mathsf x}}.
  \end{equation}
 Furthermore, $\dot{ \mathsf x} (t) = T(\bar z)(t)$ and $\ddot{ \mathsf x} (t) = \frac{d}{dt}T(\bar z)(t)$. Since $T(\bar z) \in H^1_p(\R_+ , \delta ; C^4(\Sigma))$ we obtain that $e^{\delta t} \mathsf x \in H^2_p(\R_+ , \delta ; C^4(\Sigma)) \into \mathbb E_h (\R_+)$.
Let us solve $\eqref{34kijl5645k6j45j64kj5l64kj56l4k56}_2$. Since we have a spectral gap, the operator $A_s$ has maximal $L_p$-regularity on $X_0$ on the half line $\R_+$. Recall that $S(\bar z) =\omega P^s (\bar u, (I-\Pi) \bar h)$. For given $(\bar u, \bar h) \in \mathbb E_u (\delta ; \R_+) \times \mathbb E_h (\delta ; \R_+)$ we now note that
\begin{equation}
\bar h \in H^1_p(\R_+ , \delta ; W^{2-2/q}_q(\Sigma)) \cap L_p(\R_+ , \delta ; W^{4-1/q}_q(\Sigma))
\end{equation}
by embedding. If now $0 < \delta \leq \delta_0$ for some sufficiently small $\delta_0 > 0$ depending on the spectral bound of $A_s$ we may solve $\eqref{34kijl5645k6j45j64kj5l64kj56l4k56}_2$ in exponentially time-weighted spaces to the result 
\begin{equation}
\mathsf y \in \mathbb E_u(\delta ; \R_+) \times [ H^1_p(\R_+ , \delta ; W^{2-2/q}_q(\Sigma)) \cap L_p(\R_+ , \delta ; W^{4-1/q}_q(\Sigma))  ].
\end{equation}
It is now noteworthy that $\mathsf y_2$ posseses some more regularity by the nature of the equations,
\begin{equation}
 \partial_t \mathsf y_2  = - \ljump \partial_3 \mathsf y_1 \rjump + \omega (I- \Pi)\bar h  \in e^{-\delta t} F^{1/2-1/2q}_{pq} (\R_+ ; L_q(\Sigma)).
\end{equation}
Putting things together we see that $\tilde z = \mathsf x + \mathsf y$ and $z_\infty = \mathsf x_\infty$.

We now consider \eqref{345dfggbdfgdfg345}. Let $L_\omega$ be the linear operator defined by the left hand side of \eqref{345dfggbdfgdfg345}. We may rewrite the problem then abstractly as
\begin{equation} 
L_\omega \bar z = N(z_\infty + \tilde z + \bar z), \; t > 0, \quad \bar z(0) = \bar z_0 := ( \phi(\tilde u_0, \tilde h_0) , 0).
\end{equation}
By the above considerations there exists a function $\tilde H$ such that $\tilde z = \tilde H ( \mathsf x_0, \mathsf y_0, \bar z)$. In order to resolve the compatibility conditions and to solve \eqref{345dfggbdfgdfg345} we define
\begin{equation} \label{34k5hll3jh4534}
M(\mathsf x_0, \mathsf y_0, \bar z ) := N (z_\infty + \tilde z + \operatorname{ext}_\delta [ (\phi(\tilde u_0, \tilde h_0) , 0) - (\bar u (0), \bar h (0)) ] + \bar z ),
\end{equation}
where $\operatorname{ext}_\delta : W^{2-2/q}_q(\Omega \backslash \Sigma) \times B^{4-1/q-2/p}_{qp}(\Sigma) \pfeil \mathbb E_u (\delta ; \R_+ ) \times E_h (\delta ; \R_+ )$ is an extension operator satisfying $(\operatorname{ext}_\delta z ) (0) = z$. We may now argue as in \cite{abelswilke} and solve
\begin{equation} \label{435kljh345hk345hk34}
L_\omega \bar z = M(\mathsf x_0, \mathsf y_0, \bar z ), \; t >0, \quad \bar z(0) = ( \phi(\tilde u_0, \tilde h_0), 0),
\end{equation}
by the implicit function theorem locally around zero. Indeed, we also have that $L_\omega$ is invertible, provided $\omega > 0$ is large enough. To see this we shall consider the linear system $L_\omega u = F$, for a given right hand side $F$. By the reduction procedure in the proof of Theorem \ref{34ztutzutzutzutzutzutzuf} we have shown that without loss of generality we may assume that $F=(f_u,0,0,0,0,0,0)$. We may then write the problem abstractly as an evolution equation
\begin{equation}
\omega u + \dot u + A u = (f_u, 0), \; t > 0, \quad u(0) = 0,
\end{equation}
where $A$ is as before and has maximal regularity of type $L_p$ on bounded intervals. By now choosing $\omega >0 $ large enough, we obtain that the spectral bound of $\omega + A$ is strictly negative, hence $\omega + A$ has maximal regularity on the half line. Again we note that $u_2$ then enjoys some additional time regularity, also in exponentially time-weighted spaces. 

Define now the map
\begin{equation}
K( \mathsf x_0, \mathsf y_0, \bar z ) := \bar z - (L_\omega, \operatorname{tr}_{t=0} )^{-1} ( M (\mathsf x_0, \mathsf y_0, \bar z), ( \phi(\tilde u_0, \tilde h_0), 0 ) ).
\end{equation}
By construction in \eqref{34k5hll3jh4534}, $M$ resolves the compatibility conditions at $t = 0$. Furthermore, note that $M(0) = M' (0) = 0$, whence as in \cite{abelswilke} we may apply the implicit function theorem to obtain a solution $\bar z$ of \eqref{345dfggbdfgdfg345}.

Putting things together, we see that $(\tilde u (t), \tilde h(t) )$ as well as $(\bar u (t), \bar h(t))$ converge to zero in $X_\gamma$ as $t \pfeil 0$ at an exponential rate.
\end{proof}

\section{Two-phase Navier-Stokes/Stefan problems}
In this section we want to couple the Stefan problem with Gibbs-Thomson correction and the ninety degree contact angle condition to the two-phase Navier-Stokes equations with surface tension in a capillary domain. This model without boundary contact has already been studied widely as a model for incompressible two-phase fluid flows with phase transitions, cf. \cite{pruessbuch}. It is well-known in the case where no boundary contact occurs that the Navier-Stokes part is only weakly coupled to the Stefan problem, cf. Remark 1.3.3 in \cite{pruessbuch}. We will show rigorously that this is also the case when a ninety degree contact angle problem is present. 

We shall now precisely formulate the model we consider. For simplicity let $n=3$. Let $-\infty < L_1 < 0 < L_2 < \infty$,
$\Omega := \Sigma \times (L_1,L_2) \subset \R^3$, and $\Sigma \subset \R^2$ bounded and smooth. Let again $S_1 := \partial \Sigma \times (L_1,L_2)$ and $S_2 := \Sigma \times \{ L_1, L_2 \}$.

We consider the coupled system
\begin{equation} \label{34ghfghfgh75} \begin{cases}
\begin{alignedat}{2}
\partial_t v + (v \cdot \nabla) v - \mu^\pm \Delta v + \nabla p &= 0, &\text{in } \Omega \backslash \Gamma(t), \\
\div v &= 0, &\text{in } \Omega \backslash \Gamma(t), \\
- \ljump \mu^\pm (Dv + Dv^\top) - pI \rjump \nu_\Gamma &= \sigma H_\Gamma \nu_\Gamma, &\text{on } \Gamma(t), \\
\ljump v \rjump &= 0, &\text{on }  \Gamma(t), \\
\partial_t u - \Delta u &= 0, &\text{in } \Omega \backslash \Gamma(t), \\
 \ljump u \rjump =0, \quad u|_\Gamma &= \sigma H_\Gamma &\text{on } \Gamma(t), \\
n_{\partial \Omega} \cdot \nabla u  &= 0,&\text{on } \partial\Omega \backslash \partial\Gamma(t), \\
V_\Gamma &= v|_\Gamma \cdot \nu_\Gamma - \ljump \nu_\Gamma \cdot \nabla u \rjump,  \quad &\text{on }  \Gamma(t), \\
P_{S_1} \left( \mu^\pm (Dv + Dv^\top) \nu_{S_1} \right) &= 0, & \text{on } S_1 \backslash \partial\Gamma(t), \\
v \cdot \nu_{S_1} &= 0, & \text {on } S_1 \backslash \partial \Gamma(t), \\
v|_{S_2} &= 0, & \text{on } S_2, \\
\nu_\Gamma \cdot \nu_{\partial\Omega}  &= 0, &\text{on } \partial \Gamma(t), \\
v|_{t=0} &= v_0, & \text{in } \Omega \backslash \Gamma(0), \\
u|_{t=0} &= u_0, &\text{in } \Omega \backslash \Gamma(0), \\
\Gamma|_{t=0} &= \Gamma_0.
\end{alignedat} \end{cases}
\end{equation}

Let us briefly comment on the pure-slip boundary condition for $v$. We want to reflect the problem at the contact line across the boundary of the domain to draw back the problem to a two-phase full space problem. Here, Navier-conditions seem to be the canonical choice in the literature, cf. \cite{wilkehabil}. Since we also need the two boundary conditions at $S_1$ and $S_2$ to be compatible on $\overline S_1 \cap \overline S_2$, we formulate pure-slip conditions. For further discussion we refer to \cite{navmulsekpap} and \cite{wilkehabil}. 

\subsection*{Maximal regularity for linear problem.}
 The main ingredient to the proof of maximal regularity is that the Navier-Stokes problem is only weakly coupled. 
 Note that we treated the Stefan problem in an $L_p-L_q$-theory with $p \not= q$. To obtain better regularity than $L_q$ for the velocity solution $v$, we introduce a third integration scale $L_r$, where $q < r < p$. This may seem at first unnecessarily technical, however we circumvent certain problems with this ansatz while still getting good regularity for $v$. For a detailed discussion we refer to Section 4.2 in \cite{navmulsekpap}.

 \begin{lemma} \label{kl3j4h5jh345345}
Let $p \in (6,\infty)$, $q \in (2p/(p+1),2) \cap (19/10,2)$, and $0 < T \leq T_0$ for some fixed $T_0 < \infty.$ 
Then there is some $r \in (5, 6)$, such that
for any $h \in F^{3/2-1/2q}_{pq}(0,T; L_q (\Sigma))  \cap F^{1-1/2q}_{pq}(0,T; H^2_q(\Sigma))  \cap L_p(0,T ; W^{4-1/q}_q(\Sigma))$, we have that
\begin{equation} 
\begin{split}
 \Delta_{x'} h \in W^{1/2-1/(2r)}_r(0,T;L_r(\Sigma)) \cap L_r(0;T;W^{1-1/r}_r(\Sigma)).
 \end{split}
\end{equation}
Furthermore, there is some $C = C(T) > 0$, such that
\begin{equation} \label{6087698}
\begin{alignedat}{1}
| \Delta_{x'} h &|_{ W^{1/2-1/(2r)}_r(0,T;L_r(\Sigma)) \cap L_r(0;T;W^{1-1/r}_r(\Sigma)) }  \\ &\leq C(T) |h|_{F^{3/2-1/2q}_{pq}(0,T; L_q (\Sigma))  \cap F^{1-1/2q}_{pq}(0,T; H^2_q(\Sigma))  \cap L_p(0,T ; W^{4-1/q}_q(\Sigma))}.
\end{alignedat}
\end{equation}
Furthermore,
\begin{equation} \label{4354ffff35}
H^1_r(0,T;L_r(\Omega)) \cap L_r(0,T;H^2_r(\Omega \backslash \Sigma)) \into BUC(0,T;C^1(\Omega \backslash \Sigma)).
\end{equation}
Also for $v \in H^1_r(0,T;L_r(\Omega)) \cap L_r(0,T;H^2_r(\Omega \backslash \Sigma)) \cap \{ v : v|_{t=0} = 0 \}$, the Navier-Stokes part is contractive in the Stefan-problem, that is, there is some $\epsilon > 0$ such that
\begin{equation}
| v|_\Sigma |_{ F^{1/2-1/2q}_{pq}(0,T; L_q(\Sigma)) \cap L_p(0;T; W^{1-1/q}_q(\Sigma)) } \leq CT^\epsilon |v|_{_0 H^1_r(0,T;L_r(\Omega)) \cap L_r(0,T;H^2_r(\Omega \backslash \Sigma))}
\end{equation}
for some constant $C>0$ independent on $T$.

By restricting to height functions $h$ with initial trace zero, $h(0) = 0$, the embedding constant in \eqref{6087698} can be chosen to be independent of $T$ and only depending on $T_0$. In particular, the embedding does not degenerate and the embedding constant stays bounded as $T \pfeil 0$.
 \end{lemma}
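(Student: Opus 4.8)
The plan is to handle the four assertions of the lemma by separate mechanisms that nonetheless share the same numerology forcing $r \in (5,6)$. First I would establish the regularity and estimate for $\Delta_{x'}h$. Since $\Delta_{x'}$ lowers the spatial order by two, applying it to the second and third factors of the class for $h$ gives
$$\Delta_{x'} h \in F^{1-1/2q}_{pq}(0,T; L_q(\Sigma)) \cap L_p(0,T; W^{2-1/q}_q(\Sigma)),$$
the top factor $F^{3/2-1/2q}_{pq}(0,T;L_q(\Sigma))$ being useless here as two spatial derivatives would leave negative regularity. Both surviving factors carry the second-order parabolic scaling (the base regularity $2-1/q$ corresponds to the time exponent $(2-1/q)/2 = 1-1/2q$), so by the mixed-derivative theorem, cf. \cite{kaipdiss}, this intersection may be written as an anisotropic Triebel--Lizorkin space; the target space likewise carries this scaling at the lower level $1-1/r$. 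I would then apply the anisotropic Sobolev embedding \cite{johnsensickel2008} to pass from the $(p,q)$-scale to the $(r,r)$-scale. Spatially this is $W^{2-1/q}_q(\Sigma) \into W^{1-1/r}_r(\Sigma)$, which on the two-dimensional $\Sigma$ requires $1 - 3/q + 3/r \geq 0$, that is $r \leq 3q/(3-q)$; since $q > 15/8$ for $q \in (19/10,2)$, one has $3q/(3-q) > 5$, so there is room to pick $r \in (5,\,3q/(3-q)) \subset (5,6)$, and for such $r$ the inclusion $\Delta_{x'}h \in Y$ and its norm bound \eqref{6087698} follow from continuity of the embedding.

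Second, the embedding \eqref{4354ffff35} is the standard maximal-regularity embedding: by trace theory $H^1_r(0,T;L_r(\Omega)) \cap L_r(0,T;H^2_r(\Omega\backslash\Sigma))$ embeds into $BUC([0,T]; (L_r,H^2_r)_{1-1/r,r}) = BUC([0,T]; B^{2-2/r}_{rr}(\Omega\backslash\Sigma))$, and finally $B^{2-2/r}_{rr} \into C^1$ exactly when the Sobolev index is positive, $2-2/r-3/r > 1$, i.e. $r > 5$. This is precisely the lower bound on $r$; together with the upper bound from the first step it pins $r$ to $(5,6)$ and consumes the hypothesis that $q$ be close to $2$.

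Third, for the contraction estimate on the Navier--Stokes trace, I would take the spatial trace of $v$ onto $\Sigma$, which by anisotropic trace theory for the maximal-regularity space with vanishing initial trace yields
$$v|_\Sigma \in {}_0 F^{1-1/(2r)}_{rr}(0,T;L_r(\Sigma)) \cap L_r(0,T;W^{2-1/r}_r(\Sigma)),$$
and then compare with the target $F^{1/2-1/2q}_{pq}(0,T;L_q(\Sigma)) \cap L_p(0,T;W^{1-1/q}_q(\Sigma))$. In space the surplus is comfortable: $W^{2-1/r}_r(\Sigma) \into W^{1-1/q}_q(\Sigma)$ since $\Sigma$ is bounded and $r > q$. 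In time the regularity gap $\delta := (1-1/(2r)) - (1/2-1/2q) = 1/2 + 1/2q - 1/(2r)$ strictly exceeds the cost $1/r-1/p$ of raising the integrability from $r$ to $p$ (one checks $1/2 + 1/2q + 1/p > 3/(2r)$, which holds because $r > 5$). The strict surplus $\delta - (1/r-1/p) > 0$, together with $v(0)=0$, converts by the usual scaling of vanishing-trace spaces into a factor $T^\epsilon$ with $C$ independent of $T$. The same vanishing-trace extension to the half line, now applied to $h$, gives the concluding claim that the constant in \eqref{6087698} may be chosen uniformly for $T \leq T_0$.

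I expect the genuine difficulty to lie in the first step: because the integrabilities $p$ and $q$ differ, one must work in honest anisotropic Triebel--Lizorkin spaces and carry out the anisotropic Sobolev embedding with the correct bookkeeping, and the admissible window for $r$ is the intersection of the upper constraint $r < 3q/(3-q)$ from this step with the lower constraint $r > 5$ from the $C^1$-embedding, so verifying that this window is nonempty is exactly where the proximity of $q$ to $2$ is needed.
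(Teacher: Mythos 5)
Your proposal is correct and follows essentially the same route as the paper: the core of both arguments is the mixed-derivative/anisotropic interpolation of the second and third factors of the class for $h$ combined with Sobolev embedding (the paper makes this explicit via a one-parameter family $H^{\theta(1-1/2q)}_p(0,T;B^{4-1/q-\theta(2-1/q)}_{qq})$), producing the same binding constraint $r<3q/(3-q)$, whose intersection with $r>5$ is nonempty exactly because $q>15/8$. The only cosmetic difference is in the contraction estimate, where the paper extracts the factor $T^\epsilon$ by first lowering the time integrability from $r$ to an intermediate $s\in(5,r)$ via H\"older (giving the explicit factor $T^{(r-s)/(rs)}$) and then applying a $T$-independent vanishing-trace embedding into $F^{1/2-1/2q}_{pq}$, whereas you invoke the surplus-of-regularity scaling of ${}_0$-spaces directly; both mechanisms are standard and equivalent.
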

 \begin{proof}
 The proof follows the lines of the proof of Theorem 4.1 in \cite{navmulsekpap}. We shall give the modifications to it. It is shown there that $$L_p(0,T;W^{2-1/q}_q(\Sigma))\into L_r(0,T;W^{1-1/r}_r(\Sigma)),$$ provided $r \leq p$ and $r < 3q/(3-q)$. Hence $\Delta h \in L_r(0,T; W^{1-1/r}_r(\Sigma))$.
 Let us focus on the time regularity of $\Delta h$. By Proposition 5.38 in \cite{kaipdiss},
 \begin{align*}
 F^{1-1/2q}_{pq}&(\R_+; H^2_q (\R^d))  \cap L_p(\R_+ ; B^{4-1/q}_{qq}(\R^d)) \into \\
 &\into 
 H^{\theta(1-1/2q)}_{p}(\R_+; B^{4-1/q - \theta(2-1/q)}_{qq} (\R^d)),
 \end{align*}
 for any $\theta \in (0,1)$. Hence $\Delta h \in H^{\theta(1-1/2q)}_{p}(\R_+; B^{2-1/q - \theta(2-1/q)}_{qq} (\R^d))$, $\theta \in (0,1)$. Now, $B^{2-1/q - \theta(2-1/q)}_{qq} (\R^d) \into L_r(\R^d)$, provided
 \begin{equation} \label{34590349053495jj345}
 \theta < \frac{2-3/q+2/r}{2-1/q}.
 \end{equation}
 Then $\theta(1-1/2q) < 1 - 3/2q + 1/r$. We now see that $1 - 3/2q + 1/r > 1/2 - 1/2r$ is equivalent to $ r < 3q/(3-q)$. Hence this inequality can easily be achieved for $q \in (19,10)$ and $r > 5$.
 We obtain that $ \Delta h \in W^{1/2-1/2r}( \R_+ ; L_r(\R^d))$ by choosing $\theta \in (0,1)$ satisfying
 $1/2-1/2r<\theta(1-1/2q) < 1 - 3/2q + 1/r$. Then by standard extension and restriction arguments the first part of the lemma follows.
 
 Now we take a function $v \in H^1_r(0,T;L_r(\Omega^+)) \cap L_r(0,T; H^2_r(\Omega^+))$ for $r > 5$. We obtain that $v|_\Sigma \in BUC([0,T] ; W^{2-3/r}_r(\Sigma)) \into BUC([0,T] ; C^1(\Sigma))$, since $ r> 5$. Hence already
 \begin{equation}
 | v|_\Sigma|_{L_p(0,T; W^{1-1/q}_q(\Sigma))} \leq T^{1/p} |v|_{ H^1_r(0,T;L_r(\Omega)) \cap L_r(0,T; H^2_r(\Omega \backslash \Sigma)) }.
 \end{equation}
 It remains to show that $v|_\Sigma$ is contractive in $F^{1/2 - 1/2q}_{pq}(0,T; W^{1-1/q}_q(\Sigma))$, provided that $v|_{t=0} = 0$. We need this restriction here since $v|_\Sigma$ has a time-trace at $t = 0$ and we want the estimates not to degenerate as $T \pfeil 0$. To use the results of \cite{kaipdiss} we need to extend $v$ in time to the half line $\R_+$ by reflection. 
 Now, for such a $v$ we have that
 \begin{equation}
 v \in H^1_s(0,T;L_r(\Omega)) \cap L_s(0,T; H^2_r(\Omega \backslash \Sigma))
 \end{equation}
 for some $5 < s < r$ and
 \begin{equation}
 |v|_{ H^1_s(0,T;L_r(\Omega)) \cap L_s(0,T; H^2_r(\Omega \backslash \Sigma)) } \leq T^{\frac{r-s}{rs} }|v|_{ H^1_r(0,T;L_r(\Omega)) \cap L_r(0,T; H^2_r(\Omega \backslash \Sigma))}.
 \end{equation}
 Furthermore,
 \begin{equation}
 v \in H^{1/2}_s(0,T; H^1_s(\Omega \backslash \Sigma)) \into H^{1/2}_s(0,T; H^1_q(\Omega \backslash \Sigma)),
 \end{equation}
 since $q < s <r$. Taking traces, $v \in H^{1/2}_s(0,T; W^{1-1/q}_q( \Sigma))$. Extending $v$ to the half line, then using \cite{kaipdiss} and the fact that $1/2 - 1/s > 1/2 - 1/2q - 1/p$ and $r > 5$ gives the embedding $H^{1/2}_s(\R_+; W^{1-1/q}_q( \Sigma)) \into F^{1/2 - 1/2q}_{pq} (\R_+ ; W^{1-1/q}_q( \Sigma))$. 
 \end{proof}
 
 As for the Stefan problem \eqref{34905384953745034875}, we assume that the free boundary $\Gamma(t)$ is a graph of a function $h$ over $\Sigma$. We transform the coupled Navier-Stokes/Stefan problem \eqref{34ghfghfgh75} to the fixed reference configuration $\Omega \backslash \Sigma$ by means of the Hanzawa transform \eqref{4jjjjjgjfjgjjfgjfjgfj}. Let $\rho = \rho^+ \chi_+ + \rho^- \chi_-$, $\mu = \mu^+ \chi_+ + \mu^- \chi_-$, where $\chi_\pm$ is the indicator function of $\Omega^\pm := \Omega \cap \{ x_3 \gtrless 0 \}$.   The problem for the transformed quantities $(\bar v, \bar p ,\bar u)$ then reads as
  \begin{equation} \label{9304hhhhhhnnnnnnbbbbb7gg5} \begin{cases}
\begin{alignedat}{2} 
\rho \partial_t  \bar v  - \mu \Delta \bar v  + \nabla \bar p &= F_v(\bar v, \bar p, h) , &\text{in } \Omega \backslash \Sigma, \\
\operatorname{div} \bar v &= F_d(\bar v, h), &\text{in } \Omega \backslash \Sigma, \\
- \ljump \mu (Dv + Dv^\top) -\bar p I \rjump \nu_{\Sigma }   &=  \sigma \Delta_{x'} h \nu_\Sigma + F_S(\bar v, \bar p, h), & \text{on } \Sigma,  \\
\ljump \bar v \rjump &= 0, & \text{on } \Sigma, \\
\partial_t h &= \bar v \cdot \nu_\Sigma - \ljump \partial_3 \bar u \rjump + F_\Sigma(\bar u, \bar v, h), & \text{on } \Sigma, \\
(-\nabla_{x'} h, 0)^\top \cdot \nu_{S_1} &= 0, & \text {on } \partial \Sigma, \\
(\partial_t - \Delta) \bar u &= F_c(\bar u, h), &\text {in } \Omega \backslash \Sigma, \\
\bar u|_{\Sigma} - \sigma \Delta_{x'} h &= F_\kappa (h), & \text {on } \Sigma, \\
\nu_{\partial\Omega} \cdot \nabla \bar u|_{\partial\Omega} &= F_N(\bar u, h), & \text{on } \partial\Omega \backslash\Sigma, \\
P_{S_1} \left( \mu (D\bar v + D\bar v^\top) \nu_{S_1} \right) &= F_P^\pm (\bar v, h), & \text{on } S_1 \backslash \partial\Sigma, \\
\bar v \cdot \nu_{S_1} &= 0, & \text {on } S_1 \backslash \partial \Sigma, \\
\bar v &= 0, & \text{on } S_2, \\
v(0) &= v_0, & \text{on } \Omega \backslash \Sigma, \\
u(0) &= u_0, & \text{on } \Omega \backslash \Sigma, \\
h (0) &= h_0, & \text{on } \Sigma, 
\end{alignedat} \end{cases}
\end{equation}
where $\nu_\Sigma = e_3$, and the nonlinearities on the right hand side are
\begin{equation}\label{7hhghhhjhgjjghjj643}
\begin{alignedat}{1} 
F_v(\bar v, \bar p, h) &:= \mu (\Delta_h - \Delta)\bar v + (\nabla - \nabla_h)\bar p +  D \bar v \cdot \partial_t \Theta_h^{-1} - (\bar v \cdot \nabla_h )\bar v , \\
F_d(h,\bar v) &:= (\div - \div_h) \bar v, \\
F_S (h,\bar v, \bar p) &:=   \ljump \mu^\pm \left( (D\Theta_h - I)D \bar v + D \bar v^\top (D\Theta_h - I)^\top ) \right) \rjump \nu_{\Gamma_h} +    \\
&\quad+ \ljump \left( \mu^\pm (D \bar v+D \bar v^\top) - \bar p I \right) (e_3 - \nu_{\Gamma_h} ) \rjump + \sigma( K(h) \nu_{\Gamma_h} - \Delta h e_3 ), \\
F_P^\pm(h,\bar v) &:= P_{S_1} \left( \mu^\pm \left( (D\Theta_h - I)D\bar v + D \bar v ^\top (D\Theta_h - I)^\top ) \right) \nu_{S_1} \right), \\
F_u (\bar u,h) &:=  (\Delta_h - \Delta)\bar u + D \bar u \cdot \partial_t \Theta_h^{-1}, \\
F_\kappa(h) &:= \sigma \left[K(h) - \Delta h \right], \\
F_N (\bar u,h) &:= (\nu_{\partial\Omega} | (\nabla-\nabla_h) \bar u) , \\
F_\Sigma (\bar v, \bar u, h) &:= \ljump \partial_3 \bar u - n_\Gamma \cdot \nabla_h \bar u \rjump + \partial_t h (e_3| e_3 - n_\Gamma ) + (\bar v | e_3 - \nu_\Gamma ).
\end{alignedat}
\end{equation}
Here $K(h)$ is given by
\begin{equation}
K(h) = \div \left( \frac{\nabla h}{\sqrt { 1+ |\nabla h|^2 }} \right).
\end{equation}
 Let us now show maximal regularity for the linearization.
 For convenience we will drop the bars and consider the linear problem
   \begin{equation} \label{9304hlienarporbjsdfsdfsdfkdfjgdfgg7gg5} \begin{cases}
\begin{alignedat}{2} 
\rho \partial_t   v  - \mu \Delta v  + \nabla  p &= f_v , &\text{in } \Omega \backslash \Sigma, \\
\operatorname{div}  v &= f_d, &\text{in } \Omega \backslash \Sigma, \\
- \ljump \mu (Dv + Dv^\top) - p I \rjump \nu_{\Sigma } - \sigma \Delta h \nu_\Sigma   &=  f_S, & \text{on } \Sigma,  \\
\ljump  v \rjump &= f_J, & \text{on } \Sigma, \\
\partial_t h - v^+|_\Sigma \cdot e_3 + \ljump \partial_3  u \rjump &=  f_\Sigma, & \text{on } \Sigma, \\
(-\nabla_{x'} h, 0)^\top \cdot \nu_{S_1} &= f_a, & \text {on } \partial \Sigma, \\
(\partial_t - \Delta)  u &= f_w, &\text {in } \Omega \backslash \Sigma, \\
\ljump u \rjump =0, \quad u|_{\Sigma} - \sigma \Delta h &= f_\kappa, & \text {on } \Sigma, \\
\nu_{\partial\Omega} \cdot \nabla  u|_{\partial\Omega} &= f_n, & \text{on } \partial\Omega \backslash\Sigma, \\
P_{S_1} \left( \mu (D v + D v^\top) \nu_{S_1} \right) &= P_{S_1} f_P, \quad\quad & \text{on } S_1 \backslash \partial\Sigma, \\
 v \cdot \nu_{S_1} &= f_N, & \text {on } S_1 \backslash \partial \Sigma, \\
v &= f_D, & \text{on } S_2, \\
v(0) &= v_0, & \text{on } \Omega \backslash \Sigma, \\
u(0) &= u_0, & \text{on } \Omega \backslash \Sigma, \\
h (0) &= h_0, & \text{on } \Sigma, 
\end{alignedat} \end{cases}
\end{equation}
where $v^+ := v|_{\Omega^+}$ and the right hand side are all given data. We are interested in strong solutions
\begin{equation} \label{3445jjjh3j4k5h34jk5}
\begin{gathered}
u \in H^1_p(0,T; L_q(\Omega)) \cap L_p(0,T; H^2_q(\Omega \backslash \Sigma)) , \\
h \in F^{3/2-1/2q}_{pq}(0,T; L_q (\Sigma))  \cap F^{1-1/2q}_{pq}(0,T; H^2_q(\Sigma))  \cap L_p(0,T ; W^{4-1/q}_q(\Sigma)), \\
v \in H^1_r(0;T; L_r(\Omega)) \cap L_r(0,T; H^2_r(\Omega \backslash \Sigma)), \\
p \in L_r(0,T; \dot H^1_r (\Omega \backslash \Sigma)), \\
\ljump p \rjump \in W^{1/2 - 1/2r}_r(0,T;L_r(\Sigma)) \cap L_r(0,T;W^{1-1/r}_r(\Sigma)).
\end{gathered}
\end{equation}
Let us now state necessary conditions for the data. These consist of the conditions for the Stefan problem and the Stokes problem in a capillary, cf. \cite{wilkehabil}. We have the regularity conditions
\begin{equation} \label{345kohklhjklhlhj345}
\begin{gathered}
f_v \in L_r(0,T; L_r (\Omega)), \\
f_d \in L_r(0,T; H^1_r(\Omega \backslash \Sigma)), \\
f_S \in  W^{1/2-1/(2r)}_r(0,T;L_r(\Sigma)) \cap L_r(0;T;W^{1-1/r}_r(\Sigma)), \\
f_J \in  W^{1-1/(2r)}_r(0,T;L_r(\Sigma)) \cap L_r( 0,T; W^{2-1/r}_r(\Sigma)),  \\
f_\Sigma \in F^{1/2-1/2q}_{pq}(0,T; L_q(\Sigma)) \cap L_p(0,T;W^{1-1/q}_q(\Sigma)), \\
f_a \in {F}^{5/4 - 1/q + 1/4(3q-1)   }_{pq}(0,T; L_q (\partial \Sigma))  \cap {F}^{1-1/2q}_{pq}(0,T; W^{1-1/q}_q(\partial \Sigma)) \\ \quad\quad\quad \cap \;  L_p(0,T ; W^{3-2/q}_q(\partial \Sigma)),
\end{gathered}
\end{equation}
as well as
\begin{equation}
\begin{gathered}
f_w \in L_p(0,T;L_q(\Omega)), \\
f_\kappa \in F^{1-1/2q}_{pq}(0,T; L_q(\Sigma)) \cap  L_p(0,T;W^{2-1/q}_q(\Sigma)), \\
f_n \in   F^{1/2-1/2q}_{pq}(0,T; L_q(\partial \Omega)) \cap   L_p(0,T; W^{1-1/q}_q(\partial\Omega)), \\
P_{S_1}f_P \in  W^{1/2-1/(2r)}_r(0,T;L_r(S_1)) \cap L_r(0;T;W^{1-1/r}_r(S_1)),\\
f_N \in W^{1-1/(2r)}_r(0,T;L_r(S_1)) \cap L_r( 0,T; W^{2-1/r}_r(S_1 \backslash \partial\Sigma)),\\
f_D \in W^{1-1/(2r)}_r(0,T;L_r(S_2)) \cap L_r( 0,T; W^{2-1/r}_r(S_2)), \\
v_0 \in W^{2-2/r}_{r} (\Omega \backslash \Sigma) , \quad h_0 \in B^{4-1/q-2/p}_{qp}(\Sigma) , \quad u_0 \in W^{2-2/q}_q(\Omega \backslash \Sigma), \\
(f_d,f_J,f_N,f_D) \in H^1_r(\R_+; \hat H^{-1}_r(\Omega)), 
\end{gathered}
\end{equation}
the compatibility conditions at time $t = 0$,
\begin{equation} \label{345kohklhjklhlhj345B}
\begin{gathered}
\div v_0 = f_d|_{t=0} , \quad \text{in } \Omega \backslash \Sigma, \\
- \ljump \mu^\pm \partial_3 (v_0)_{1,2} \rjump - \ljump \mu^\pm \nabla_{x'} (v_0)_3 \rjump = (f_S)_{1,2}|_{t=0}, \quad\text{on } \Sigma, \\
\ljump v_0 \rjump = f_J|_{t=0}, \quad\text{on } \Sigma,\\
P_{S_1} ( \mu^\pm (Dv_0 + Dv_0^\top)\nu_{S_1}) = P_{S_1} f_P|_{t=0}, \quad\text{on } S_1, \\
v_0 \cdot \nu_{S_1} = f_N|_{t=0}, \quad\text{on } S_1, \\
v_0|_{S_2} = f_D|_{t=0}, \quad\text{on } S_2, \\
(-\nabla_{x'} h_0, 0)^\top \cdot \nu_{S_1} = f_a|_{t=0}, \quad\text{on } \partial \Sigma,  \\
n_{\partial \Omega} \cdot \nabla u_0 = f_n|_{t=0}, \quad \text{on } \partial\Omega, \\
\ljump u_0 \rjump = 0, \quad \text{on } \Sigma, \\
u_0|_\Sigma - \Delta h_0 = f_\kappa (0), \quad \text{on } \Sigma, \\
\ljump \partial_3 u_0 \rjump - f_h (0) \in B^{2-2/q-4/p}_{qp}(\Sigma),
\end{gathered}
\end{equation}
and conditions on commonly shared boundaries of the capillary,
\begin{equation} \label{345kohklhjklhlhj345C}
\begin{gathered}
\ljump f_N \rjump  = f_J \cdot \nu_{S_1}, \quad  \text{on } \partial \Sigma, \\
\ljump ( f_P \cdot e_3)/{\mu^\pm} - \partial_3 f_N \rjump = \partial_{\nu_{S_1}} (f_J \cdot e_3), \quad \text{on } \partial\Sigma, \\
P_{\partial\Sigma} [ ( D_{x'} \Pi f_J + (D_{x'}\Pi f_J)^\top )\nu_{\partial\Sigma}] = \ljump P_{\partial\Sigma}\Pi f_P /{\mu^\pm} \rjump, \quad \text{on } \partial\Sigma, \\
(f_S)_{1,2} \cdot (\nu_{S_1})_{1,2} = - \ljump f_P \cdot e_3 \rjump,  \quad \text{on } \partial\Sigma, \\
f_D \cdot \nu_{S_1} = f_N,  \quad \text{on } \partial{S_2}, \\
P_{\partial\Sigma}[  \mu^\pm ( D_{x'} \Pi f_D + (D_{x'} \Pi f_D)^\top)\nu_{\partial\Sigma} ] = P_{\partial\Sigma} \Pi f_P,\quad \text{on } \partial S_2, \\
\mu^\pm \partial_{\nu_{S_1}}(f_D \cdot e_3) + \mu^\pm \partial_3 f_N = f_P \cdot e_3, \quad \text{on } \partial S_2. 
\end{gathered}
\end{equation}
The next result states maximal regularity for the linear problem \eqref{9304hlienarporbjsdfsdfsdfkdfjgdfgg7gg5}.
\begin{theorem} \label{3489053485jjj345}
Let $\mu^\pm, \rho^\pm, \sigma >0$ constant, $(p,q,r)$ as in Lemma \ref{kl3j4h5jh345345} and $0 < T < \infty$. Then for every
$(f_v, f_d, f_S, f_J, f_\Sigma, f_a, f_w, f_\kappa, f_n, f_P, f_N, f_D, v_0, u_0, h_0)$ satisfying \eqref{345kohklhjklhlhj345}, \eqref{345kohklhjklhlhj345B} and \eqref{345kohklhjklhlhj345C}, there is unique $(v,u,p, \ljump p \rjump, h)$ in the regularity classes of \eqref{3445jjjh3j4k5h34jk5} solving \eqref{9304hlienarporbjsdfsdfsdfkdfjgdfgg7gg5} on $(0,T)$. Furthermore, the solution map
$$[(f_v, f_d, f_S, f_J, f_\Sigma, f_a, f_w, f_\kappa, f_n, f_P, f_N, f_D, v_0, u_0, h_0) \mapsto (v,u,p, \ljump p \rjump, h)]$$ is continuous. Furthermore, the norm of the solution map with trivial initial values
$[(f_v, f_d, f_S, f_J, f_\Sigma, f_a, f_w, f_\kappa, f_n, f_P, f_N, f_D, 0, 0, 0) \mapsto (v,u,p, \ljump p \rjump, h)]$ is independent of $T$ thanks to Lemma \ref{kl3j4h5jh345345}.
\end{theorem}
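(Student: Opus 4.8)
The plan is to exploit the weak coupling between the Navier--Stokes and the Stefan subsystems and to assemble the solution from two independent maximal regularity results, linking them by a Neumann series argument whose smallness is furnished by Lemma \ref{kl3j4h5jh345345}. The two building blocks are, on the one hand, maximal $L_p$--$L_q$ regularity for the Stefan problem with ninety degree contact angle on the cylindrical container (which follows from Theorem \ref{34ztutzutzutzutzutzutzuf} and Theorem \ref{34534534534545ff} via the localization procedure of the preceding subsection), and, on the other hand, maximal $L_r$ regularity for the two--phase Stokes problem in the capillary domain with pure--slip boundary conditions, available from \cite{wilkehabil}, \cite{navmulsekpap}. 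As a first step I would, by subtracting suitable extensions of the initial data as in the reduction carried out inside the proof of Theorem \ref{34ztutzutzutzutzutzutzuf}, reduce to trivial initial values $v_0 = u_0 = h_0 = 0$; by the cited component results the associated solution operators then have $T$--independent norms.

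Next I would decouple the problem by regarding the two genuine coupling terms as forcings. The curvature contribution $\sigma \Delta_{x'} h$ is moved to the right--hand side of the stress balance $\eqref{9304hlienarporbjsdfsdfsdfkdfjgdfgg7gg5}_3$, and the normal velocity trace $v^+|_\Sigma \cdot e_3$ is moved to the right--hand side of the kinematic condition $\eqref{9304hlienarporbjsdfsdfsdfkdfjgdfgg7gg5}_5$. Solving the capillary Stokes problem with stress datum enriched by $\sigma \Delta_{x'} h$ produces a map $h \mapsto v$, and taking the trace produces $h \mapsto v^+|_\Sigma \cdot e_3$; feeding this into the Stefan solver returns a new height $h'$. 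The decisive point is that both legs of this loop are precisely the content of Lemma \ref{kl3j4h5jh345345}: the first part shows that $\sigma \Delta_{x'} h$ lies in the stress class $W^{1/2-1/(2r)}_r(0,T;L_r(\Sigma)) \cap L_r(0,T;W^{1-1/r}_r(\Sigma))$ with a bound uniform for $h(0)=0$, while the contractivity estimate shows that $v^+|_\Sigma \cdot e_3$ lands in the kinematic class $F^{1/2-1/2q}_{pq}(0,T; L_q(\Sigma)) \cap L_p(0,T; W^{1-1/q}_q(\Sigma))$ with norm bounded by $C T^\epsilon$ whenever $v(0)=0$. Composing with the two bounded solution operators, the linear part of the round--trip map $h \mapsto h'$ has norm $O(T^\epsilon)$ on $\mathbb E_h(T)$, hence is a strict contraction for $T$ small; the coupled linear problem is therefore uniquely solvable on a short interval by a convergent Neumann series, and because all constants just quoted are uniform as $T \downarrow 0$, the resulting bounds do not degenerate.

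To reach an arbitrary finite $T$ I would concatenate: partition $(0,T)$ into finitely many subintervals of length below the contraction threshold, solve successively, and use the endpoint traces -- which lie in the correct spaces $W^{2-2/r}_r(\Omega \backslash \Sigma)$, $W^{2-2/q}_q(\Omega \backslash \Sigma)$, $B^{4-1/q-2/p}_{qp}(\Sigma)$ -- as initial data for the next step. Gluing the pieces yields a solution $(v,u,p,\ljump p \rjump, h)$ in the classes \eqref{3445jjjh3j4k5h34jk5} on all of $(0,T)$; uniqueness follows from uniqueness on each subinterval, and continuity of the solution map from the continuity of the two component operators together with the Neumann--series representation. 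The asserted $T$--independence of the solution operator for vanishing initial data is exactly the statement that, for $v_0=u_0=h_0=0$, every constant entering the construction -- the two component maximal regularity bounds and the coupling constants of Lemma \ref{kl3j4h5jh345345} -- stays bounded uniformly over the horizon, which is how that lemma is formulated.

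I expect the main obstacle to be the bookkeeping of the compatibility conditions rather than the analytic estimates, the latter being delivered by Lemma \ref{kl3j4h5jh345345} and the two black--box results. One must check that after transferring $\sigma \Delta_{x'} h$ and $v^+|_\Sigma \cdot e_3$ into the data, the enriched data still satisfy the time--zero compatibilities \eqref{345kohklhjklhlhj345B} and, more delicately, the corner compatibilities \eqref{345kohklhjklhlhj345C} on the shared edges $\overline{S_1} \cap \overline{S_2}$ and on $\partial\Sigma$; here the reduction to trivial initial data and the restriction $q<2$ -- so that the offending spatial traces on the contact line do not exist -- are what prevent the corner conditions from generating extra constraints, exactly as in the Stefan analysis above and in \cite{mulsekpaper12}. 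A final point requiring some care is to confirm that the pressure jump $\ljump p \rjump$, which is slaved to the Stokes solve and never re--enters the Stefan equations, inherits the asserted regularity $W^{1/2-1/2r}_r(0,T;L_r(\Sigma)) \cap L_r(0,T;W^{1-1/r}_r(\Sigma))$ uniformly along the iteration.
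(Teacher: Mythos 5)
Your proposal is correct and follows essentially the same route as the paper, which reduces to trivial initial data, decouples the system, and runs the perturbation argument of \cite{navmulsekpap} powered by the boundedness of $h \mapsto \sigma\Delta_{x'}h$ and the $T^\epsilon$-contractivity of $v \mapsto v^+|_\Sigma$ from Lemma \ref{kl3j4h5jh345345}. Your treatment of the last compatibility condition and the concatenation to arbitrary finite $T$ fills in details the paper only gestures at (in particular the observation that $v^+|_\Sigma \in C^0([0,T];C^1(\Sigma)) $ with $C^1(\Sigma)\hookrightarrow B^{2-2/q-4/p}_{qp}(\Sigma)$ handles condition $\eqref{345kohklhjklhlhj345B}_{11}$), but the underlying strategy is identical.
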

\begin{proof}
For the proof we refer to the proof of Theorem 4.4 in \cite{navmulsekpap}. We first reduce to trivial initial values $(v_0, u_0, h_0) = (0,0,0)$ and resolve the data. Then by Lemma \ref{kl3j4h5jh345345} we can use maximal regularity for the Stefan problem and apply a perturbation argument as in \cite{navmulsekpap}. It is noteworthy that $v^+|_\Sigma \in C^0([0,T] ; C^1 (\Sigma))$ by the embeddings since $r > 5$. Since $C^1(\Sigma) \into B^{2-2/q-4/p}_{qp}(\Sigma)$ because of $q < 2$, $v^+(0) \in B^{2-2/q-4/p}_{qp}(\Sigma)$ and the compatibility $\eqref{345kohklhjklhlhj345B}_{11}$ is enough to reduce to trivial initial values of $h$ and $\partial_t h$.
\end{proof}
\subsection*{Nonlinear well-posedness.}
We consider the full nonlinear problem \eqref{9304hhhhhhnnnnnnbbbbb7gg5}.
\begin{theorem}

Let $p,q ,r$ as in Lemma \ref{kl3j4h5jh345345}. Then there is some $\delta > 0$, such that if
\begin{equation}
|v_0 |_{W^{2-2/r}_r(\Omega \backslash \Sigma)} + |u_0 |_{W^{2-2/q}_q(\Omega \backslash \Sigma)} + |h_0|_{B^{4-1/q - 2/p}_{qp}(\Sigma)} \leq \delta
\end{equation}
and $(v_0,u_0,h_0)$ satisfy the compatibility conditions
\begin{equation} \label{345kofdgdfgdfg45B}
\begin{gathered}
\div v_0 = F_d (v_0,h_0) , \quad \text{in } \Omega \backslash \Sigma, \\
- \ljump \mu^\pm \partial_3 (v_0)_{1,2} \rjump - \ljump \mu^\pm \nabla_{x'} (v_0)_3 \rjump = (F_S)_{1,2}(v_0,h_0), \quad\text{on } \Sigma, \\
\ljump v_0 \rjump = 0, \quad\text{on } \Sigma,\\
P_{S_1} ( \mu^\pm (Dv_0 + Dv_0^\top)\nu_{S_1}) = 0, \quad\text{on } S_1, \\
v_0 \cdot \nu_{S_1} = 0, \quad\text{on } S_1, \\
v_0|_{S_2} = 0, \quad\text{on } S_2, \\
(-\nabla_{x'} h_0, 0)^\top \cdot \nu_{S_1} = 0, \quad\text{on } \partial \Sigma,  \\
n_{\partial \Omega} \cdot \nabla u_0 = F_N(u_0,h_0), \quad \text{on } \partial\Omega, \\
\ljump u_0 \rjump = 0, \quad \text{on } \Sigma, \\
u_0|_\Sigma - \Delta h_0 = F_\kappa (h_0), \quad \text{on } \Sigma, \\
\ljump \partial_3 u_0 \rjump - f_h (0) \in B^{2-2/q-4/p}_{qp}(\Sigma),
\end{gathered}
\end{equation}
the transformed two-phase Navier-Stokes/Stefan problem \eqref{9304hhhhhhnnnnnnbbbbb7gg5} possesses a unique strong solution $(v,p,u,h)$ in the sense of \eqref{3445jjjh3j4k5h34jk5} on $(0, \tau)$ for some $\tau > 0$.
\end{theorem}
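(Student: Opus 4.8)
The plan is to combine the maximal $L_p$-$L_q$-$L_r$-regularity of the linearized problem established in Theorem \ref{3489053485jjj345} with a contraction argument via Banach's fixed point theorem, following the scheme used for the Stefan problem and the corresponding nonlinear result in \cite{navmulsekpap}. First I would fix the functional-analytic setting: denote by $\mathbb E(T)$ the solution space, i.e. the product of the five regularity classes in \eqref{3445jjjh3j4k5h34jk5}, and by $\mathbb F(T)$ the data space prescribed by \eqref{345kohklhjklhlhj345}. The transformed problem \eqref{9304hhhhhhnnnnnnbbbbb7gg5} is then written abstractly as $\mathcal L(v,p,u,h) = \mathcal N(v,p,u,h)$, where $\mathcal L$ is the linear operator collecting the left-hand sides of \eqref{9304hlienarporbjsdfsdfsdfkdfjgdfgg7gg5} and $\mathcal N$ gathers the nonlinearities $(F_v, F_d, F_S, 0, F_\Sigma, 0, F_u, F_\kappa, F_N, F_P, 0, 0)$ from \eqref{7hhghhhjhgjjghjj643} together with the initial data.

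Second, I would reduce to trivial initial data. Using Lemma \ref{24345hhhhhsss}, Lemma \ref{34ggg65456546}, a heat-semigroup extension in the $L_r$-scale for the velocity part, and the reduction steps from the proof of Theorem \ref{3489053485jjj345}, I construct a fixed reference $(v_*, p_*, u_*, h_*) \in \mathbb E(T)$ attaining the initial values $(v_0, u_0, h_0)$ and resolving the compatibility conditions \eqref{345kofdgdfgdfg45B}. Subtracting it transforms the problem into one with vanishing traces at $t = 0$, for which the solution operator of $\mathcal L$ has operator norm independent of $T$ by Theorem \ref{3489053485jjj345}. The smallness $\delta$ of the initial data guarantees that the reference solution is small in $\mathbb E(T)$, which will keep the nonlinearities small.

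Third, the core step, I would define the fixed-point map $K := \mathcal L^{-1} \circ \mathcal N$ on a small ball $\overline{B}_R(0) \subset {}_0\mathbb E(\tau)$ of elements with trivial initial trace, with $R = R(\delta)$, and show that $K$ is a self-map and a strict contraction once $\delta$ and $\tau$ are small. The nonlinearities in \eqref{7hhghhhjhgjjghjj643} are smooth in $(h, \nabla h, Dv, D^2v, \nabla p)$ and vanish together with their first derivatives at the reference state, so each summand factors into a product in which at least one factor carries a small norm. The crucial embeddings are \eqref{34895834573485734857485743}, giving $h \in BUC([0,T]; C^2(\Sigma))$, and \eqref{4354ffff35}, giving $v \in BUC([0,T]; C^1(\Omega \backslash \Sigma))$; these let me control the coefficient differences $D\Theta_h - I$, $\nabla - \nabla_h$ and $e_3 - \nu_{\Gamma_h}$ in the sup-norm by $|h|$, and the quadratic curvature term $F_\kappa$ by $|h|^2$. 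A gain of a factor $\tau^\epsilon$ comes either from Hölder's inequality in time or directly from the contraction estimate for $v|_\Sigma$ in Lemma \ref{kl3j4h5jh345345}.

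The main obstacle I expect is the weak coupling across the two integrability scales: the velocity lives in the $L_r$-scale while the Stefan part lives in the $L_p$-$L_q$-scale with $q < r < p$. I must verify that the two coupling terms — the velocity trace $v^+|_\Sigma \cdot e_3$ entering the kinetic condition $\eqref{9304hlienarporbjsdfsdfsdfkdfjgdfgg7gg5}_5$, and the curvature $\sigma \Delta h$ feeding back into the Stokes stress balance $\eqref{9304hlienarporbjsdfsdfsdfkdfjgdfgg7gg5}_3$ — map with small norm into the respective data classes. This is exactly what Lemma \ref{kl3j4h5jh345345} supplies: the contractivity of $[v \mapsto v|_\Sigma]$ into $F^{1/2-1/2q}_{pq}(0,\tau; L_q(\Sigma)) \cap L_p(0,\tau; W^{1-1/q}_q(\Sigma))$ with a factor $\tau^\epsilon$, together with the $T$-independent mapping of $\Delta_{x'} h$ into the Navier-Stokes data space $W^{1/2-1/(2r)}_r(0,\tau; L_r(\Sigma)) \cap L_r(0,\tau; W^{1-1/r}_r(\Sigma))$. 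Given these two estimates the off-diagonal contributions to $\mathcal N$ are small, so $K$ contracts and Banach's theorem yields the unique local strong solution $(v,p,u,h) \in \mathbb E(\tau)$; uniqueness and continuous dependence follow from the contraction property in the standard way.
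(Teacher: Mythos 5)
Your proposal is correct and is exactly the argument the paper intends: the paper states this theorem without proof (just as it omits the details for the Stefan-only nonlinear result, where it explicitly says the proof is maximal regularity of the linearization plus a Banach fixed-point contraction following \cite{navmulsekpap}), and your scheme --- reduction to trivial initial traces, $T$-independent bound for $\mathcal L^{-1}$ from Theorem \ref{3489053485jjj345}, smallness of the nonlinearities via the embeddings \eqref{34895834573485734857485743} and \eqref{4354ffff35}, and Lemma \ref{kl3j4h5jh345345} to control the two off-diagonal coupling terms across the $L_r$ and $L_p$--$L_q$ scales --- fills in precisely those details. The only minor imprecision is that the nonlinearities vanish to first order at the trivial state rather than at the reference state, but since the reference state is $O(\delta)$ in $\mathbb E(T)$ this does not affect the contraction estimate.
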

\subsection*{Qualitative behaviour.}
Let us now again investigate the long-time behaviour.


For simplicity, let $\rho = 1$. By testing $\eqref{34ghfghfgh75}_1$ with the solution $v$ and invoking the other equations we obtain
\begin{equation}
\frac{d}{dt} \left[ \int_\Omega \frac{|v|^2}{2} \int_{\Gamma(t)} \sigma + \int_\Omega \frac{|u|^2}{2} \right] = - 2 \int_\Omega \mu | \mathbb D v |^2 - \int_\Omega |\nabla u|^2.
\end{equation}
Hence any stationary solution $(v,p,u,\Gamma)$ satisfies that $v = 0$, $p$ is constant with possibly different values in the phases, and $u$ and $H_\Gamma$ are constant. Note that the pressure $p$ can always be reconstructed by solving a weak transmission problem. The set of equilibrium solutions is
\begin{equation}
\mathcal E = \{ (v,u, \Gamma) : v= 0, \; H_\Gamma = const., \; u = \sigma H_\Gamma \}.
\end{equation}
If we now additionally assume that $\Gamma$ is the graph of a function $h$ over $\Sigma$, we again obtain $H_\Gamma = 0$. In particular, $h$ has to be constant then and $u = 0$.

We will now study the problem for the height function \eqref{345345345435345345} in an $L_p$-setting. The equilibria in the graph case are
\begin{equation}
\mathcal E_\Sigma = \{ (v,u, h) : v = 0, \; u = 0, \; h = const. \}.
\end{equation}
The linearization of the (transformed) two-Phase Navier-Stokes/Stefan problem with Gibbs-Thomson correction around the trivial equilibrium $(v_*,u_*,h_*) = (0,0,0)$ motivates us to study the linear problem
 \begin{equation} \label{93ccc0ggfdgbtztz7gg5} \begin{cases}
\begin{alignedat}{2} 
 \partial_t   v  - \mu \Delta v  + \nabla  p &= f_v , &\text{in } \Omega \backslash \Sigma, \\
\operatorname{div}  v &= 0, &\text{in } \Omega \backslash \Sigma, \\
- \ljump \mu (Dv + Dv^\top) - p I \rjump \nu_{\Sigma } - \sigma \Delta h \nu_\Sigma   &=  0, & \text{on } \Sigma,  \\
\ljump  v \rjump &= 0, & \text{on } \Sigma, \\
\partial_t h - v|_\Sigma \cdot e_3 + \ljump \partial_3  u \rjump &=  f_h, & \text{on } \Sigma, \\
(-\nabla_{x'} h, 0)^\top \cdot \nu_{S_1} &= 0, & \text {on } \partial \Sigma, \\
(\partial_t - \Delta)  u &= f_u, &\text {in } \Omega \backslash \Sigma, \\
\ljump u \rjump =0, \quad u|_{\Sigma} - \sigma \Delta h &= 0, & \text {on } \Sigma, \\
\nu_{\partial\Omega} \cdot \nabla  u|_{\partial\Omega} &= 0, & \text{on } \partial\Omega \backslash\Sigma, \\
P_{S_1} \left( \mu (D v + D v^\top) \nu_{S_1} \right) &= 0, \qquad\qquad\quad & \text{on } S_1 \backslash \partial\Sigma, \\
 v \cdot \nu_{S_1} &= 0, & \text {on } S_1 \backslash \partial \Sigma, \\
v &= 0, & \text{on } S_2, \\
v(0) &= v_0, & \text{on } \Omega \backslash \Sigma, \\
u(0) &= u_0, & \text{on } \Omega \backslash \Sigma, \\
h (0) &= h_0, & \text{on } \Sigma, 
\end{alignedat} \end{cases}
\end{equation}
We now rewrite \eqref{93ccc0ggfdgbtztz7gg5} as an abstract evolution equation. Define 
\begin{equation}
X_0 := L_r(\Omega) \times L_q (\Omega) \times W^{2-2/q}_q(\Sigma), \quad X_1 := H^2_r(\Omega \backslash \Sigma) \times H^2_q(\Omega \backslash \Sigma) \times W^{4-1/q}_q(\Sigma),
\end{equation}
and the linear operator $B$ in $X_0$ by means of $B : D(B) \subset X_1 \pfeil X_0$,
\begin{equation} \label{345kkkkkj34534543}
B(v,u,h) := (-\Delta v + \nabla p, -\Delta u, - v|_\Sigma \cdot \nu_\Sigma + \ljump \partial_3 u \rjump ),
\end{equation}
with domain
\begin{equation}
\begin{alignedat}{1}
D(B) := \{ (v,&u,h) \in X_1 :  \ljump u \rjump =0 \text{ on } \Sigma, \; u|_\Sigma = \sigma \Delta h \text{ on } \Sigma, \; \ljump \partial_3 u \rjump \in W^{2-2/q}_q(\Sigma),  \\
&\ljump v \rjump =0  \text{ on } \Sigma, \; P_{S_1} \left( \mu^\pm (Dv + Dv^\top) \nu_{S_1} \right) = 0 \text{ on } S_1 \backslash \partial \Sigma,\\ & v \cdot \nu_{S_1} = 0 \text{ on } S_1, v|_{S_2} = 0, \; P_\Sigma ( \ljump \mu^\pm (Dv + Dv^\top) \rjump \nu_\Sigma ) = 0 \text{ on } \Sigma
\}
\end{alignedat}
\end{equation}
In equation \eqref{345kkkkkj34534543}, the pressure $p \in \dot H^1_r (\Omega \backslash \Sigma)$ solves the weak transmission problem
\begin{align*}
(\nabla p | \nabla \phi)_{L_2(\Omega)} &= (\mu \Delta v  | \nabla \phi )_{L_2(\Omega)}, \quad & \text{for all } \phi \in W^1_{r'}(\Omega), \\
\ljump p \rjump &= \sigma \Delta h + ( \ljump \mu^\pm (Dv+Dv^\top)  \rjump \nu_\Sigma | \nu_\Sigma )_{L_2(\Sigma )}, & \text{on } \Sigma,
\end{align*}
cf. Lemma A.7 in \cite{wilkehabil}.

For $f_v \in L_r(0,T;L_r(\Omega))$, $f_h \in L_p(0,T; W^{2-2/q}_q(\Sigma))$, and $f_u \in L_p(0,T; L_q (\Omega))$, we may rewrite \eqref{93ccc0ggfdgbtztz7gg5} as an abstract evolution equation in $X_0$,
\begin{equation}
\dot z (t) + Bz(t) = f(t), \; t > 0, \quad z(0) = z_0,
\end{equation}
where $f := (f_v,f_h, f_u)$ and $z_0 := (v_0, u_0, h_0)$. The operator $B$ now has the following properties.
\begin{lemma}
Let $n = 2,3$, $p,q,r$ as in Lemma \ref{kl3j4h5jh345345}, $B$, $X_0$ as above and $z_* = (0,0,0)$ be the trivial equilibrium.
\begin{enumerate}
\item The linear operator $-B$ generates an analytic $C_0$-semigroup $e^{-Bt}$ in $X_0$ and
the spectrum $\sigma(-B)$ consists of at most countably many eigenvalues with finite algebraic multiplicity.
 \item $\sigma(-B) \cap i\R \subset \{ 0 \}$ and $\sigma(-B) \backslash \{ 0 \} \subset \mathbb C_- := \{ z \in \mathbb C : \operatorname{Re} z < 0 \}$. 
\item $\lambda = 0$ is semi-simple with multiplicity one, $X_0 = N(B) \oplus R(B)$. 
\item $N(B) = T_{z_*} \mathcal E_\Sigma = \mathcal E_\Sigma$ and $N(B)$ is spanned by $(0,0,1)$.
\item The restriction of the semigroup $e^{-Bt}$ to ${R(B)}$ is exponentially stable.
\end{enumerate}
\end{lemma}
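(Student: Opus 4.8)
The plan is to follow, step by step, the proof of Lemma \ref{3445kk34k5jhl345} for the pure Stefan operator $A$, upgrading each argument to accommodate the velocity field $v$ and the pressure $p$.

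For item (1), the generation of an analytic $C_0$-semigroup with maximal $L_p$-regularity is obtained from Theorem \ref{3489053485jjj345}: specializing the linear problem \eqref{9304hlienarporbjsdfsdfsdfkdfjgdfgg7gg5} to the data $(f_v,f_u,f_h)$ with all remaining inhomogeneities and initial values set to zero exhibits $\dot z + Bz = f$ as a problem with maximal $L_p$-regularity, which is equivalent to $R$-sectoriality of $B$ and hence to $-B$ generating an analytic semigroup. Since $X_1$ embeds compactly into $X_0$ (because $H^2_r(\Omega\backslash\Sigma)$, $H^2_q(\Omega\backslash\Sigma)$ and $W^{4-1/q}_q(\Sigma)$ embed compactly into $L_r(\Omega)$, $L_q(\Omega)$ and $W^{2-2/q}_q(\Sigma)$ on the bounded sets $\Omega,\Sigma$), the resolvent of $B$ is compact, so $\sigma(-B)$ consists of isolated eigenvalues of finite algebraic multiplicity.

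For items (2) and (4) I would write out the eigenvalue problem $-B(v,u,h) = \lambda(v,u,h)$ — the stationary two-phase Stokes system with stress jump $\ljump 2\mu\mathbb D v - pI\rjump\nu_\Sigma = -\sigma\Delta h\,\nu_\Sigma$, the heat equation $\lambda u - \Delta u = 0$ with $u|_\Sigma = \sigma\Delta h$, the kinematic relation $\lambda h = v|_\Sigma\cdot e_3 - \ljump\partial_3 u\rjump$, together with $\ljump v\rjump = \ljump u\rjump = 0$, the Neumann conditions for $u$ and $h$, and the slip/no-slip conditions for $v$; here $p$ is reconstructed from the weak transmission problem. Testing the momentum balance with $\bar v$, the heat equation with $\bar u$ (both in $L_2(\Omega)$, integrating by parts over $\Omega^\pm$), and pairing the kinematic relation with $\sigma\overline{\Delta h}$, the boundary contributions on $S_2$ vanish by $v|_{S_2} = 0$, those on $S_1$ by $v\cdot\nu_{S_1} = 0$ together with $P_{S_1}(2\mu\mathbb D v\,\nu_{S_1}) = 0$, and those on $\partial\Omega$ by $n_{\partial\Omega}\cdot\nabla u = 0$; the pressure integrates out since $\div v = 0$, leaving only the jump $\ljump p\rjump$ which is supplied by the transmission condition. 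After inserting $u|_\Sigma = \sigma\Delta h$ and the kinematic relation and using $\int_\Sigma h\,\overline{\Delta h} = -|\nabla h|_{L_2(\Sigma)}^2$ (from the Neumann condition on $\partial\Sigma$), the three identities can be combined in the conjugated form for which the coupling terms $\sigma\int_\Sigma v_3\,\overline{\Delta h}$ cancel exactly, yielding
\[
\lambda\big(\,|u|_{L_2(\Omega)}^2 + \sigma|\nabla h|_{L_2(\Sigma)}^2\,\big) + \bar\lambda\,|v|_{L_2(\Omega)}^2 + |\nabla u|_{L_2(\Omega)}^2 + 2\int_\Omega \mu\,|\mathbb D v|^2 = 0.
\]
Taking real parts immediately gives $\operatorname{Re}\lambda\le 0$, hence $\sigma(-B)\setminus\{0\}\subset\mathbb C_-$. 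If $\operatorname{Re}\lambda = 0$, then $\nabla u = 0$ and $\mathbb D v = 0$; the latter makes $v$ an infinitesimal rigid motion, which by $v|_{S_2} = 0$ and $v\cdot\nu_{S_1} = 0$ in the cylinder forces $v\equiv 0$, and then the kinematic and Gibbs-Thomson relations together with $\int_\Sigma\Delta h = 0$ (Neumann) force $u\equiv 0$ and $h$ constant; thus $\lambda = 0$ and $N(B)$ is spanned by $(0,0,1)$, which is exactly $\mathcal E_\Sigma = T_{z_*}\mathcal E_\Sigma$.

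For item (3) I would show $N(B) = N(B^2)$ as in the $A$-case: for $(v,u,h)\in N(B^2)$ set $(v_1,u_1,h_1) := B(v,u,h)\in N(B)$, so $v_1 = 0$, $u_1 = 0$ and $h_1$ is constant, and the third component of $B(v,u,h) = (v_1,u_1,h_1)$ reads $-v|_\Sigma\cdot e_3 + \ljump\partial_3 u\rjump = h_1$ while $\Delta u = 0$ and $\div v = 0$. Integrating this relation over $\Sigma$ and applying the divergence theorem on $\Omega^\pm$ (using $n_{\partial\Omega}\cdot\nabla u = 0$, $v|_{S_2} = 0$ and $v\cdot\nu_{S_1} = 0$) gives $\int_\Sigma\ljump\partial_3 u\rjump = 0$ and $\int_\Sigma v_3 = 0$, whence $h_1|\Sigma| = 0$ and $h_1 = 0$; thus $(v,u,h)\in N(B)$ and $0$ is semisimple, so $X_0 = N(B)\oplus R(B)$. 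Finally, item (5) follows because compactness of the resolvent yields a discrete spectrum without finite accumulation points, and combined with analyticity and $\sigma(-B)\setminus\{0\}\subset\mathbb C_-$ only finitely many eigenvalues lie in any strip $\{\operatorname{Re}z\ge -\varepsilon\}$; hence there is a genuine spectral gap and $e^{-Bt}|_{R(B)}$ is exponentially stable. The main obstacle is the bookkeeping of the second step — organizing the interface stress, the pressure transmission and the lateral/top boundary contributions so that they collapse into the single dissipative identity above, and then trivializing $v$ from $\mathbb D v = 0$ in the capillary geometry; the remaining steps are routine adaptations of the corresponding parts of Lemma \ref{3445kk34k5jhl345}.
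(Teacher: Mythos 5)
Your proposal is correct and follows essentially the same route as the paper: compactness of the resolvent for discreteness of the spectrum, the energy identity obtained by testing the eigenvalue problem (yours differs from the paper's only in which factors carry $\lambda$ versus $\bar\lambda$, an inessential conjugation convention), Korn's inequality together with the slip/no-slip conditions to force $v=0$ on the imaginary axis, the divergence-theorem computation $\int_\Sigma\bigl(-v\cdot\nu_\Sigma+\ljump\partial_3 u\rjump\bigr)\,dx'=0$ to get $N(B)=N(B^2)$, and the resulting spectral gap for exponential stability on $R(B)$. No substantive differences.
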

\begin{proof}
By compact embeddings, $\sigma(B)$ only consists of countably many eigenvalues with finite multiplicity. Let $\lambda \in \sigma(-B)$ with eigenfunctions $(v,u,h)$. The corresponding eigenvalue problem reads
\begin{equation} \label{93ccc0ffdfvdfvvbvbg5} \begin{cases}
\begin{alignedat}{2} 
 \lambda   v  - \mu \Delta v  + \nabla  p &= 0 , &\text{in } \Omega \backslash \Sigma, \\
\operatorname{div}  v &= 0, &\text{in } \Omega \backslash \Sigma, \\
- \ljump \mu (Dv + Dv^\top) - p I \rjump \nu_{\Sigma } - \sigma \Delta h \nu_\Sigma   &=  0, & \text{on } \Sigma,  \\
\ljump  v \rjump &= 0, & \text{on } \Sigma, \\
\lambda h - v \cdot \nu_\Sigma + \ljump \partial_3  u \rjump &=  0, & \text{on } \Sigma, \\
(-\nabla_{x'} h, 0)^\top \cdot \nu_{S_1} &= 0, & \text {on } \partial \Sigma, \\
(\lambda - \Delta)  u &= 0, &\text {in } \Omega \backslash \Sigma, \\
\ljump u \rjump =0, \quad u|_{\Sigma} - \sigma \Delta h &= 0, & \text {on } \Sigma, \\
\nu_{\partial\Omega} \cdot \nabla  u|_{\partial\Omega} &= 0, & \text{on } \partial\Omega \backslash\Sigma, \\
P_{S_1} \left( \mu (D v + D v^\top) \nu_{S_1} \right) &= 0, \qquad\qquad\quad & \text{on } S_1 \backslash \partial\Sigma, \\
 v \cdot \nu_{S_1} &= 0, & \text {on } S_1 \backslash \partial \Sigma, \\
v &= 0, & \text{on } S_2.
\end{alignedat} \end{cases}
\end{equation}
Testing $\eqref{93ccc0ffdfvdfvvbvbg5}_1$ with $v$ and invoking the other equations, we obtain
\begin{equation} \label{345kkkjhjk345}
\lambda |v|_{L_2(\Omega)}^2 + | \mu^{1/2} \mathbb D v |_{L_2(\Omega)}^2 + \sigma \bar \lambda | \nabla h |^2_{L_2(\Sigma)} + | Du|_{L_2(\Omega)}^2 + \lambda |u|_{L_2(\Omega)}^2 = 0.
\end{equation}
If $\lambda = 0$, we obtain that $u$ is constant and $v = 0$ by Korn's inequality. By $\eqref{93ccc0ffdfvdfvvbvbg5}_8$ also $\Delta h$ is constant. Integrating $\Delta h$ over $\Sigma$ and invoking the boundary condition $\eqref{93ccc0ffdfvdfvvbvbg5}_6$ gives $\Delta h = 0$, which in turn implies that $h$ is constant. Equation $\eqref{93ccc0ffdfvdfvvbvbg5}_8$ then renders $u = 0$. This implies that $N(B)$ is spanned by $(0,0,1)$.

Taking real parts in \eqref{345kkkjhjk345} gives that any eigenvalue $\lambda$ satisfies $\operatorname{Re} \lambda \leq 0$. Furthermore, if an eigenvalue $\lambda$ has real part zero, again $\eqref{345kkkjhjk345}$ yields that $u$ is constant and $v = 0$ again by Korn's inequality. Equation $\eqref{93ccc0ffdfvdfvvbvbg5}_5$ gives $\lambda h = 0$, and since $h$ may not be trivial, $\lambda = 0$.

Let us show that $N(B) = N(B^2)$. Pick some $(v,u,h) \in N(B^2)$ and define $(v_1,u_1,h_1) := B(v,u,h)$. Then $(v_1,u_1,h_1) \in N(B)$, hence $v_1 = u_1 = 0$ and $h_1$ is constant. The problem for $(v,u,h)$ reads
\begin{equation} \label{93cccggdfvggcxxcvvvg5} \begin{cases}
\begin{alignedat}{2} 
  - \mu \Delta v  + \nabla  p &= 0 , &\text{in } \Omega \backslash \Sigma, \\
\operatorname{div}  v &= 0, &\text{in } \Omega \backslash \Sigma, \\
- \ljump \mu (Dv + Dv^\top) - p I \rjump \nu_{\Sigma } - \sigma \Delta h \nu_\Sigma   &=  0, & \text{on } \Sigma,  \\
\ljump  v \rjump &= 0, & \text{on } \Sigma, \\
 - v \cdot \nu_\Sigma + \ljump \partial_3  u \rjump &=  h_1, & \text{on } \Sigma, \\
(-\nabla_{x'} h, 0)^\top \cdot \nu_{S_1} &= 0, & \text {on } \partial \Sigma, \\
  \Delta  u &= 0, &\text {in } \Omega \backslash \Sigma, \\
\ljump u \rjump =0, \quad u|_{\Sigma} - \sigma \Delta h &= 0, & \text {on } \Sigma, \\
\nu_{\partial\Omega} \cdot \nabla  u|_{\partial\Omega} &= 0, & \text{on } \partial\Omega \backslash\Sigma, \\
P_{S_1} \left( \mu (D v + D v^\top) \nu_{S_1} \right) &= 0, \qquad\qquad\quad & \text{on } S_1 \backslash \partial\Sigma, \\
 v \cdot \nu_{S_1} &= 0, & \text {on } S_1 \backslash \partial \Sigma, \\
v &= 0, & \text{on } S_2.
\end{alignedat} \end{cases}
\end{equation}
Integrating $\eqref{93cccggdfvggcxxcvvvg5}_5$ over $\Sigma$, an integration by parts yields
\begin{equation}
0 = -\int_{\Omega^+} \div v dx + \int_\Omega \Delta u dx = -\int_\Sigma v \cdot \nu_\Sigma dx' + \int_\Sigma \ljump \partial_3 u \rjump dx' = \int_\Sigma h_1 dx' = h_1 | \Sigma|.
\end{equation}
Hence $h_1 = 0$ and $(v,u,h) \in N(B)$. The rest follows as in the proof of Lemma \ref{3445kk34k5jhl345}.
\end{proof}
\begin{remark}
We do not want to work out a convergence result corresponding to Theorem \ref{3948560384765083746534} for this coupled problem. We believe however that the analogous statement indeed holds true.
\end{remark}

\section*{Acknowledgements} 
This article was written during the Trimester program "Evolution of Interfaces" at the Hausdorff Research Institute for Mathematics (HIM), University of Bonn, in 2019. The support and the hospitality of HIM are gratefully acknowledged.
\bibliographystyle{plain}
\bibliography{bibo}
\end{document}